\let\footnote=\endnotetext
\newlength{\heightRecaller}
\newcommand{\reservespace}[1]{%
    \let \oldstepcounter \stepcounter%
    \renewcommand{\stepcounter}[1]{}%
    \settototalheight{\heightRecaller}{\parbox{\textwidth}{#1}}%
    \let \stepcounter \oldstepcounter%
    \needspace{\heightRecaller+4\baselineskip}%
    #1%
}
\def\Xint#1{\mathchoice
{\XXint\displaystyle\textstyle{#1}}%
{\XXint\textstyle\scriptstyle{#1}}%
{\XXint\scriptstyle\scriptscriptstyle{#1}}% 
{\XXint\scriptscriptstyle\scriptscriptstyle{#1}}%
\!\int}
\def\XXint#1#2#3{{\setbox0=\hbox{$#1{#2#3}{\int}$ }
\vcenter{\hbox{$#2#3$ }}\kern-.6\wd0}}
\def\ddashint{\Xint=}
\def\dashint{\Xint-}
\newcounter{hours}\newcounter{minutes}
\theoremstyle{theorem}
\newtheorem{thm}{Theorem}[section]
\newtheorem{lem}[thm]{Lemma}
\newtheorem{cor}[thm]{Corollary}
\newtheorem{prop}[thm]{Proposition}
\newtheorem{THM}{Theorem}
\theoremstyle{definition}
\newtheorem{ex}[thm]{Example}
\newtheorem{DEF}[thm]{Definition}
\theoremstyle{remark}                  %% For unnumbered Remarks, etc.
\newtheorem{rem}[thm]{Remark}
\theoremstyle{theorem}
\newtheorem*{rep@theorem}{\rep@title}
\newcommand{\newreptheorem}[2]{%
	\newenvironment{rep#1}[1]{%
		\def\rep@title{#2 \ref{##1}}%
		\begin{rep@theorem}}%
		{\end{rep@theorem}}}
\theoremstyle{definition}
\newtheorem*{REP@theorem}{\rep@title}
\newcommand{\newREPtheorem}[2]{%
	\newenvironment{REP#1}[1]{%
		\def\rep@title{#2 \ref{##1}}%
		\begin{REP@theorem}}%
		{\end{REP@theorem}}}
\def\C{{\mathcal C}}
\def\D{{\mathcal D}}
\def\E{{\mathcal E}}
\def\F{{\mathcal F}}
\def\G{{\mathcal G}}
\def\H{{\mathcal H}}
\def\I{{\mathcal I}}
\def\J{{\mathcal J}}
\def\K{{\mathcal K}}
\def\L{{\mathcal L}}
\def\M{{\mathcal M}}
\def\N{{\mathcal N}}
\def\R{{\mathbb R}}
\def\W{{\mathcal W}}
\def\m{{\overline{m}}}
\def\integer{{\mathbb Z}}
\def\natural{{\mathbb N}}
\def\e{\varepsilon}
\def\Om{\Omega}
\def\gam{\gamma} 
\def\lam{\lambda}
\def\Lam{\Lambda}
\def\z{\zeta}
\def\Up{\Upsilon}
\def\grad{\nabla}
\DeclareMathOperator*{\argmin}{\arg\!\min}
\def\liminf{\mathop{\lim\,\inf}\limits}%
\def\liminfs{\mathop{\lim\,\inf{}_*}\limits}%
\def\limsup{\mathop{\lim\,\sup}\limits}%
\def\limsups{\mathop{\lim\,\sup{}^*}\limits}%
\def\argmin{\mathop{\arg\,\min}\limits}%
\numberwithin{equation}{section}
\def\per{\textnormal{Per}}
\def\tr{\textnormal{trace}}
\newcommand{\oB}{{\overline B}}
\newcommand{\oD}{{\overline D}}
\newcommand{\oL}{{\overline L}}
\newcommand{\cD}{{\mathcal D}}
\newcommand{\cI}{{\mathcal I}}
\newcommand{\cJ}{{\mathcal J}}
\newcommand{\cK}{{\mathcal K}}
\newcommand{\td}{{\widetilde d}}
\newcommand{\tu}{{\widetilde u}}
\newcommand{\tv}{{\widetilde v}}
\newcommand{\hr}{{\widehat r}}
\newcommand{\hu}{{\widehat u}}
\newcommand{\hzm}{{\widehat {\z^-}}}
\newcommand{\Rn}{{\mathbb R^n}}
\newcommand{\Rpz}{{\mathbb R^+_0}}
\newcommand{\Rpt}{{[0,T]}}
\newcommand{\weakly}{\ensuremath{\rightharpoonup}}
\def\dashint{\,\ThisStyle{\ensurestackMath{%
  \stackinset{c}{.2\LMpt}{c}{.5\LMpt}{\SavedStyle-}{\SavedStyle\phantom{\int}}}%
  \setbox0=\hbox{$\SavedStyle\int\,$}\kern-\wd0}\int}
\def\ddashint{\,\ThisStyle{\ensurestackMath{%
  \stackinset{c}{.2\LMpt}{c}{.5\LMpt+.2\LMex}{\SavedStyle-}{%
    \stackinset{c}{.2\LMpt}{c}{.5\LMpt-.2\LMex}{\SavedStyle-}{%
      \SavedStyle\phantom{\int}}}}\setbox0=\hbox{$\SavedStyle\int\,$}\kern-\wd0}\int}
\newcommand{\pot}{{\partial \Omega_t}}
\newcommand{\potj}{{\partial \Omega_t^j}}
\newcommand{\poz}{{\partial \Omega_0}}
\newcommand{\pet}{{\partial E_t}}
\newcommand{\pe}{{\partial E}}
\newcommand{\pdr}{{\partial_p D_r}}
\newcommand{\podt}{{\partial \Omega^\delta_t}}
\newcommand{\ot}{{\Omega_t}}
\newcommand{\ott}{{(\ot)_{t\geq0}}}
\newcommand{\oitt}{{(\oit)_{t\geq0}}}
\newcommand{\oktt}{{(\okt)_{t\geq0}}}
\newcommand{\odtt}{{(\odt)_{t\geq0}}}
\newcommand{\odt}{{\Omega^\delta_t}}
\newcommand{\okt}{{\Omega^{k}_{t}}}
\newcommand{\oz}{{\Omega_0}}
\newcommand{\oit}{{\Omega^\infty_t}}
\newcommand{\odta}{{\Omega^\delta_{t_1}}}
\newcommand{\odtb}{{\Omega^\delta_{t_2}}}
\newcommand{\ota}{{\Omega_t^1}}
\newcommand{\otb}{{\Omega_t^2}}
\newcommand{\otj}{{\Omega_t^j}}
\newcommand{\ottj}{{(\otj)_{t\geq0}}}
\newcommand{\uit}{{U^\infty_t}}
\newcommand{\uitt}{{(\uit)_{t\geq0}}}
\newcommand{\ei}{{\eta^\infty}}
\newcommand{\lt}{\lambda(t)}
\newcommand{\ld}{\lambda_{\delta}}
\newcommand{\ldh}{\lambda_{\delta}^h}
\newcommand{\ldt}{\lambda_{\delta}(t)}
\newcommand{\ldth}{\lambda_{\delta}^h(t)}
\newcommand{\rr}{\rho\text{-reflection}}
\newcommand{\li}{{\lambda_\infty}}
\newcommand{\Li}{{\Lambda_{\infty}}}
\newcommand{\Ld}{{\Lambda_{\delta}}}
\newcommand{\Lk}{{\Lambda_{k}}}
\newcommand{\Lki}{{\Lambda_{k_i}}}
\newcommand{\cJd}{\cJ_{\delta}}
\newcommand{\Jd}{\J_{\delta}}
\newcommand{\gd}{\gamma_{\delta}}
\newcommand{\fai}{for all $i \in \natural$}
\newcommand{\sd}{d_{signed}}
\newcommand{\n}{\vec{n}}
\newcommand{\dv}{\textnormal{div}}
\newcommand{\Gp}{{\G_s[\phi]}}
\newcommand{\Gps}{{\G_s[\psi]}}
\newcommand{\resp}{{(supersolution, respectively) }}
\newcommand{\Brt}{{B_r^{n-1}(0) \times [0,T]}}
\begin{document}

\title{Volume preserving mean curvature flow for star-shaped sets}
\author{Inwon Kim}
\address{Department of Mathematics, UCLA, Los Angeles, USA}
\thanks{Inwon Kim was partially supported by NSF DMS-1566578.}
\email{ikim@math.ucla.edu}

\author{Dohyun Kwon}
\address{Department of Mathematics, UCLA, Los Angeles, USA}
\email{dhkwon@ucla.edu}
\thanks{Dohyun Kwon was partially supported by Kwanjeong Educational Foundation.}
\date{}

\subjclass[2010]{}

\keywords{Mean curvature flow, viscosity solutions, minimizing movements, star-shaped, moving planes method, volume preserving mean curvature flow}

\begin{abstract}
We study the evolution of star-shaped sets in volume preserving mean curvature flow. Constructed by approximate minimizing movements, our solutions preserve a strong version of star-shapedness. We also show that the solutions converges to a ball as time goes to infinity. For asymptotic behavior of the solutions we use the gradient flow structure of the problem, whereas a modified notion of viscosity solutions is introduced  to study the geometric properties of the flow by moving planes method. 

 \end{abstract}

\maketitle

\section{Introduction}

Let $\Omega_0$ be a open and bounded domain in $\R^n$ with unit volume, and consider the evolution of sets $(\Omega_t)_{t\geq0}$ moving with the normal velocity $V$ given by 
\begin{align}
\label{model}
V=-H+\lambda(t) \hbox{ on } \Gamma_t:= \partial\Omega_t, \quad |\Omega_t| = |\Omega_0|.
\end{align}
In smooth setting, $H$ is mean curvature on $\pot$ where $H$ is set to be positive if the domain is convex at the point, and $\lambda : \Rpz \rightarrow \R$ satisfies $\int_{\Gamma_t} V dS = 0$ so that  the evolution satisfies  $|\Omega_t| = |\Omega_0|$, i.e. 
\begin{align}
\label{lag}
\lambda(t) = \frac{1}{\per(\Omega_t)}\int_{\partial \Omega_t} H d\sigma = \fint_{\partial \Omega_t} H d\sigma.
\end{align}

There are two main difficulties to study the global behavior of the flow \eqref{model} in general settings. First the evolution may go through topological changes, and secondly the formula \eqref{lag} does not hold for $\lam$ in less than $C^{1,\alpha}$ settings. The first difficulty motivates us to study geometric properties that are preserved by the flow, and the second requires new ideas to obtain sufficient compactness to establish convergence to equilibrium.

\medskip

In variational setting,  \eqref{model} can be formulated based on its energy dissipation structure for the perimeter energy with volume preserving constraint. Using this structure \cite{MugSeiSpa16} and \cite{Tak17} showed the existence of general distribution solution of \eqref{model}. For our interest in geometric properties of solutions, we instead work with a modified version of viscosity solutions, where we consider an implicit choice of $\lam$ so that the volume of the evolving set is preserved over time.

\medskip

Motivated by recent results \cite{MugSeiSpa16} and \cite{KimKwo18}, our strategy is to approximate \eqref{model} by the following flow as $\delta\to 0$:
\begin{align}
\label{modeld}
\begin{cases}
V &=-H+\ldt, \quad \ldt := \gd(|\ot|) \hbox{ on } \Gamma_t,\\
\Om^\delta_0 &= \Om_0.
\end{cases}
\end{align}
where $\gd: \R^+ \rightarrow \R$ for $\delta>0$ is defined by
\begin{align}
\label{eqn:gd}
\gd(s) := \frac{1}{\delta} (1 - s).
\end{align}

Let us mention that the comparison principle does not hold for both \eqref{model} and \eqref{modeld}, so  the notion of solutions should be understood as viscosity solutions with a priori given $\ldt$ (see Definition~\ref{def:model} and Definition~\ref{def:viss}). Compared to the original flow \eqref{model}, \eqref{modeld} holds an advantage that $\ldt$ only depends on $|\ot|$, thus it can be handled with little information on the regularity of $\Gamma_t$, which makes it easier to handle with viscosity solutions theory.  The existence and uniqueness for viscosity solutions of \eqref{modeld} were proved in \cite{KimKwo18}. The following is summary of the main results in Theorem~\ref{thm:ap} \& \ref{thm:l2}, Corollary~\ref{thm:exi} and Theorem~\ref{thm:cov}.

\begin{THM}\label{mainthm}
Let us denote $\mathcal{B}(x_0)$ to be the ball of unit volume centered at $x_0$. Under the geometric assumption on the initial data,
\begin{equation}
\label{as:a}
\Omega_0 \hbox{ satisfies } \rho\hbox{-reflection (See Definition~\ref{def:rho}) for some } \rho \in [0, (c_n 5)^{-1}), c_n = |B_1(0)|^{1/n}. 
\end{equation}
There exists a viscosity solution $(\oit, \lambda_{\infty})$ of \eqref{model} approximated by solutions $\{ (\odt, \lambda_{\delta}) \}_{\delta > 0}$ of \eqref{modeld}  as $\delta>0$ with the following properties:
 
 \begin{itemize}
\item[(a)] Along a subsequence $\delta$ and for any finite time $T$, we have
$$
 \max_{0\leq t\leq T} d_H(\odt, \oit) \rightarrow 0,  \ld \weakly \li \hbox { in } L^2([0,T]).
$$
\item[(b)] There exists $r>0$ such that for all $t, \delta>0$ both $\oit$ and $\odt$ contain the ball $B_r(0)$ and stay star-shaped with respect to it.\\
\item[(c)] $\oitt$ uniformly converges to a ball of volume $1$, modulo translation, i.e., 
$$
\sup_{x_0\in \R^n} d_H(\oit, \mathcal{B}(x_0)) \to 0 \hbox{ as } t\to 0.
$$

\end{itemize}
\end{THM}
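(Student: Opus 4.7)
The plan is to build $(\oit,\li)$ as a $\delta\to 0$ limit of the approximate viscosity solutions $(\odt,\ld)$ of \eqref{modeld} whose existence and uniqueness are already provided by \cite{KimKwo18}, and to establish (a)-(c) in that order. Part (a) rests on uniform-in-$\delta$ a priori estimates coming from the energy
\[
E_\delta(\Omega) := \per(\Omega) + \tfrac{1}{2\delta}(1-|\Omega|)^2,
\]
which is dissipated along \eqref{modeld}: the dissipation identity yields a uniform bound on $\per(\odt)$ and, via the identity $\ld = \gd(|\odt|)$ and the cross term, an $L^2$-in-time bound on $\ld$. Combined with the uniform inner-ball control supplied by part (b) and closeness of $|\odt|$ to $1$, isoperimetric-type estimates provide a uniform outer bound on $\odt$, and Arzel\`a-Ascoli in Hausdorff distance extracts a subsequence with $\odt \to \oit$ locally uniformly in $t$ and $\ld \weakly \li$ in $L^2([0,T])$. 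Standard stability of viscosity solutions of the level set mean curvature equation under weakly converging bounded additive forcing then identifies $(\oit,\li)$ as a modified viscosity solution of \eqref{model}.

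For part (b), the star-shaped lower bound is obtained at the level of the approximate flows by a moving planes argument and then transferred to $\oit$ by Hausdorff convergence. The decisive structural feature is that in \eqref{modeld} the forcing $\ld = \gd(|\odt|)$ depends only on volume, so reflecting $\odt$ across any hyperplane produces another viscosity sub/supersolution of the same equation with the same $\ld$. A comparison applied between $\odt$ and shifted reflections of itself, in the sense of the modified viscosity notion of Definition~\ref{def:viss}, propagates the $\rho$-reflection property from \eqref{as:a} in time uniformly in $\delta$. By Definition~\ref{def:rho}, this implies the existence of a ball $B_r(0)$ of radius $r=r(\rho)$ contained in $\odt$ for every $t$, with respect to which $\odt$ is star-shaped; both the inclusion and the star-shapedness pass to the $\delta \to 0$ limit.

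For part (c), the asymptotic behaviour combines the gradient flow structure with the compactness from part (b). Since $\per(\oit)$ is nonincreasing and bounded below, a dissipation estimate produces a sequence $t_k \to \infty$ along which the flow is nearly stationary, in the sense that $\li$ approaches the spatial average of the mean curvature. The uniform inner and outer ball bounds furnish a Hausdorff limit $\Omega_\infty$ of unit volume containing $B_r(0)$ along $t_k$, and the dissipation shows that $\Omega_\infty$ satisfies a weak constant mean curvature condition. Because the $\rho$-reflection property is preserved in the limit, $\Omega_\infty$ inherits strong symmetry constraints which, together with the weak CMC condition, force it to be a ball of unit volume; uniqueness of subsequential limits up to translation then upgrades subsequential convergence to uniform convergence modulo translation in the sense stated.

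The main obstacle is expected to lie in the rigidity step of part (c): because $\oit$ is only a modified viscosity solution with an $L^2$ Lagrange multiplier, any limit $\Omega_\infty$ is a priori merely a set of finite perimeter satisfying a weak CMC condition, and Alexandrov's soap bubble theorem is not directly applicable. Bridging this gap, either by upgrading regularity from the combination of star-shapedness and preserved $\rho$-reflection, or by running a direct symmetrization argument in the limit, is the delicate technical point. A secondary difficulty is implementing the moving planes comparison of part (b) within the viscosity framework with an implicit multiplier; this is precisely why the detour through the volume-dependent approximation \eqref{modeld} is used rather than working with \eqref{model} directly.
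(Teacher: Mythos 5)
Your overall architecture (approximate by the $\delta$-flow, inherit star-shapedness and $\rho$-reflection from the moving-planes results of \cite{KimKwo18}, pass to a Hausdorff limit, extract a weak $L^2$ limit of the multipliers, then use the gradient-flow structure for the long-time behavior) is the same as the paper's, but two of your key steps do not work as stated. First, the uniform $L^2([0,T])$ bound on $\ld$ cannot be extracted from the energy dissipation identity ``via the identity $\ld=\gd(|\odt|)$ and the cross term'': the dissipated energy $\cJ_\delta(\Omega)=\per(\Omega)+\frac1{2\delta}(1-|\Omega|)^2$ only controls $\delta\,\ld(t)^2$, i.e.\ $\ld=O(\delta^{-1/2})$ pointwise, and the dissipation $\int\int_{\partial\odt}V^2$ controls $V=-H+\ld$ but not $\ld$ itself, since separating $\ld$ from $V$ requires a bound on the total mean curvature, which fails for non-convex sets (Examples~\ref{ex:tm1}, \ref{ex:tm2}). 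The paper instead derives the $\delta$-uniform $L^2$ bound (Theorem~\ref{thm:l2}) from a discrete Euler--Lagrange estimate for the \emph{constrained} minimizing movements: one needs admissible outward and inward local variations that keep the competitor in $S_{r_0,R_0}$ (Lemmas~\ref{lem:bdd} and \ref{lem:dil}, Proposition~\ref{lem:el}), combined with a Besicovitch-covering estimate bounding $\int_{\partial F}d^2(x,\partial E)\,d\sigma$ by $\td^2(F,E)$ (Proposition~\ref{lem:signed}) and the dissipation inequality of Lemma~\ref{lem:gf}. This variational estimate is the central new ingredient and your route does not produce it. Second, there is no ``standard stability of viscosity solutions under weakly converging forcing'' to invoke: $\ld$ is not uniformly bounded, and the existing $L^1$-viscosity notions are not stable under weak convergence; the paper has to introduce the sup/inf-convolution notion of Definition~\ref{def:visl} and prove comparison and stability for it (Theorems~\ref{thm:comp} and \ref{thm:stf}) before the limit $(\oit,\li)$ can be identified as a solution of \eqref{model}.

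In part (c) there is a further gap beyond the rigidity issue you flag: you assume the limit flow inherits the dissipation structure (``$\per(\oit)$ is nonincreasing and bounded below''), but this is exactly what cannot be shown directly, since perimeter is not continuous under Hausdorff convergence of non-convex sets. The paper transfers the $\delta$-level dissipation to the limit via the quantitative perimeter-difference estimate of Lemma~\ref{lem:per}, which itself uses both star-shapedness and the $L^2$ curvature bound, and then proves stationarity of the time-shifted limit flow (Proposition~\ref{prop:lon}); the constancy in time of the limiting multiplier is obtained from the comparison principle for the new solution notion, not from the formula $\lam=\fint H$, which is unavailable at Lipschitz regularity. The rigidity step is then resolved not by Alexandrov for sets of finite perimeter but by observing that the stationary limit with constant multiplier is a viscosity solution of $H=\eta^\infty$, locally a Lipschitz graph by star-shapedness, hence smooth by elliptic regularity, and therefore a ball. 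So while your outline captures the intended strategy, the proof as proposed is missing the constrained Euler--Lagrange estimate, the new stability framework, and the perimeter-convergence mechanism, each of which is essential.
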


Let us briefly discuss the main ingredients and challenges in the context of literature.

\medskip

{\bf Geometric properties } Due to the low-dimensional nature of the flow, finite-time singularities even for smooth $\oz$ can be expected in general. It is well known (\cite{Hui87}) that convexity is preserved in the flow \eqref{model}, and the global-time behavior of convex evolution, as well as exponential convergence to the unit ball, has been studied in the smooth case \cite{Hui87} and for anistropic flow \cite{And01} and \cite{BCCN09}. Our goal in this paper is understanding the evolution of star-shaped sets. While it is suspected that star-shapedness is preserved in the evolution, it remains open to be proved. In \cite{KimKwo18} we instead considered a stronger version of star-shapeness, i.e.  the property {\it $\rho$-reflection} given in Section~\ref{sec:not}. Roughly speaking this property amounts to the boundary of the set being Lipschitz with respect to the spherical coordinate given by $B_{\rho}(0)$. \cite{KimKwo18} shows, by moving planes argument, that this property is preserved in the flow with volume-dependent forcing, which includes \eqref{modeld}. In particular this property implies $(b)$ for $\odt$, as well as an equi-continuity over time, yielding the first part of $(a)$. It should be pointed out that, as in \cite{KimKwo18}, our geometric arguments should be incorporated with the variational methods, since the underlying gradient flow structure of \eqref{model} and \eqref{modeld}  provides both existence and asymptotic convergence results for both problems. For this reason our construction of solutions for \eqref{model}-\eqref{modeld} employs constrained minimizing movements with admissible sets only for star-shaped sets, which differs from the standard constructions. 

\bigskip

{\bf Regularity of $\partial\oit$ and Notions of solutions.}
To yield the second part of $(a)$, we obtain uniform $L^2$ bound for $\ld$, largely following the variational arguments in \cite{MugSeiSpa16}, adapted to our constrained minimizing movements described above. The main difficulty that is new in this paper is the lack of the uniform $L^\infty$ bound on $\li$. The bounds for $\ld$ correlates to that of the {\it total curvature} $\int_{\partial\Omega_t} H dS$. An $L^\infty$ bound for $\li$  along with the geometric property of $\Omega_t$ would invoke parabolic regularity theory for curvature flows to yield smoothness of the flow, which in turn yields sufficient compactness to discuss the asymptotic behavior of the flow. Indeed this was the case for \cite{KimKwo18}, where $\ld$ in \eqref{modeld} is a priori bounded by $\frac{1}{\delta}$. 

\medskip

 For convex case, Minkowski's quadratic inequality (See (78) in \cite{BCCN09} and Proposition 4.1 in \cite{And01}) yields a uniform bound on the total mean curvature of a set in terms of volume and perimeter. However, for non-convex set, this inequality fails and the total mean curvature can be unbounded (See Example~\ref{ex:tm1} and Example~\ref{ex:tm2}).

\medskip

For us there is only $L^2$ estimates are available on $\li$, which is inherited from $\ld$'s (see Section~\ref{sec:l2}). For this reason, we fall short of obtaining regularity of $\partial\oit$ that goes beyond Lipschitz. In particular this necessitates a notion of viscosity solutions of $V=-H+\lam$ for a priori given $\lam \in L^1_{loc}([0,\infty))$ (Definition~\ref{def:visl}). Moreover, to assert that the limit $(\oit, \li)$ solves \eqref{model}, our notion needs to stay stable under weak convergence of $\lam$ in $L^2$.  Once such notion is established for prescribed $\lam$, we can introduce a notion of viscosity solutions of \eqref{model}:

\begin{DEF}
\label{def:model} 
The pair $(\ott,\lam)$ be a viscosity solution of \eqref{model} if
$|\Omega_t| = |\Omega_0|$ and $\ott$ is a viscosity solution (See Definition~\ref{def:visls}) of $V=-H+\lam(t)$.
\end{DEF}
The extended notion for prescribed $\lam$, Definition~\ref{def:visl}, enables us to analyze geometric properties of $(\ott,\lam)$ for $\lam \in L^1_{loc}([0,\infty))$. Notions of viscosity solutions for time-integrable operator are previously introduced in \cite{Ish85},  \cite{Bou081} and \cite{Bou082}.  These previous notions however do not allow stability under weak convergence of operators, and thus in this aspect our notion is new. Our notions however coincide with the previous ones as a consequence of its stability properties, see Remark~\ref{coincidence}.

\medskip

Note that gradient and curvature estimates of volume preserving mean curvature and classical mean curvature were proven in \cite{Ath97} for rotationally symmetric case and \cite{EH89} for entire graphs. We expect that this arguments with interior estimates from \cite{EH91} and \cite{Eck04} can be applied for our case with suitable modifications, but we do not pursue this issue here. With higher regularity, uniqueness of the solution for \eqref{model} may be shown by dilation arguments as in \cite{Gig06} and \cite{BCCN09}.

\medskip

{\bf Long-time behavior of the evolution} As mentioned above, we are short of proving smoothness of $\oit$ beyond its Lipschitz graph property, though we expect it to be true. Note that in non-smooth or non-convex setting, perimeter difference may not converge into zero as Hausdorff distance converges to zero. This poses a challenge for proving asymptotic convergnce of $\oit$.  Our proof of perimeter convergence in the asymptotic limit uses both the uniform $L^2$ estimates of mean curvature and star-shapedness (See Lemma~\ref{lem:per}).  \cite{Esc98} and \cite{AKS10} show global well-posedness and exponential convergence if the initial condition is sufficiently close to a round sphere in H\"{o}lder norm and Sobolev norm. respectively. Similar results were proven for sufficiently small traceless second fundamental form of the initial condition in \cite{Li09}. We mention that most of existing results on asymptotic convergence require regularity of the interface to be smoother than $C^{1,\alpha}$.

\medskip

We finish this section with an outline of the paper. In section~\ref{sec:not} we recall  level set formulation of \eqref{model} and notions of the corresponding viscosity solutions for a prescribed and continuous $\lam$. Then we extend the notion to $\lam\in L^1_{loc}([0,\infty))$ and establish its well-posedness by comparison principle. Then we are able to define the notion of solutions for our original flow \eqref{model} as given in Definition~\ref{def:model}.   %In section 3 we prove geometric properties for the solutions of \eqref{model}. The argument mostly draws from that of \cite{KimKwo18}, with some differences caused by the implicit nature of $\lam$ in our problem. 
In section~\ref{sec:ap} we introduce the approximation by \eqref{modeld} constructed by a constrained minimizing movement. Based on their geometric properties, we establish the first part of Theorem~\ref{mainthm} (a) for the limiting set $\Omega_{\infty}$. Section~\ref{sec:l2} completes the statement of Theorem~\ref{mainthm}(a) and (b)  by establishing a uniform  $L^2$ bound of $\lambda_{\delta}$, using the variational construction of solutions for \eqref{modeld}. This leads to the weak convergence of $\lambda_{\delta}$ to $\lambda_{\infty}$, While following the outline given by \cite{MugSeiSpa16}, our construction of local variation is more delicate (Lemma~\ref{lem:bdd} and Lemma~\ref{lem:dil}), since the perturbed set needs to stay within our geometric constraints. Finally in section~\ref{sec:cov} we prove Theorem~\ref{mainthm}(c), by establishing the perimeter convergence of $\oit$ as $t\to\infty$, using the $L^2$ bound on $\lambda_{\infty}$ obtained in section~\ref{sec:l2}.

\section{Preliminaries and a notion of solution}
\label{sec:not}

\subsection{Notations}
We begin with a list of definitions.
\begin{itemize}
\item $Q:=\R^n \times [0,\infty), \quad Q_T:=\R^n \times [0,T]$;\\
\item $D_r(x_0,t_0) := B_r(x_0) \times (t_0 - r^2,t_0], \quad \partial_p D_r := (\oB_r(x_0) \times \{ t_0 \}) \cup ( \partial B_r(x_0) \times [t_0 - r^2, t_0])$;\\
\item $C_{r,h}(x):= x + [-h,h] \times B_r^{n-1}(0), \,\,   C_{r,h}^{+} (x):= x + [0,h] \times B_r^{n-1}(0), \,\, B_r^{n-1}(0) := \{x\in \R^{n-1}, |x|\leq 1\}$;
\end{itemize}

\medskip

Next we recall some geometric properties from \cite{FelKim14}.

\begin{DEF}\cite[Definition 10]{FelKim14}
\label{def:rho}
A bounded, open set $\Omega$ satisfies $\rho$\textit{-reflection} if 
\begin{enumerate}
\item $\Omega$ contains $\overline{B_\rho(0)}$ and
\item $\Omega$ satisfies that for all direction $\nu \in S^{n-1}$ and all $s>\rho$.
\begin{align}
\Psi_{\Pi_\nu(s)} (\Omega \cap \Pi^+_{\nu}(s)) \subset \Omega \cap \Pi^-_{\nu}(s) 
\end{align}
\end{enumerate}
where $\Psi_{\Pi_\nu(s)}$ is a reflection function with respect to the hyperplane $\Pi_\nu(s):= \{x\cdot \nu =s\}$ defined by
\begin{align*}
\Psi_{\Pi_\nu(s)}(x) := x- 2 \langle x-s\nu, \nu \rangle \nu, \quad \Pi^+_{\nu}(s) : = \{ x \in \R^n : x \cdot \nu > s \} \hbox{ and } \Pi^-_{\nu}(s) : = \{ x \in \R^n : x \cdot \nu < s \}. 
\end{align*}
\end{DEF}

\begin{DEF}\label{def:star}
A bounded set $\Omega$ in $\mathbb{R}^n$ is \textit{star-shaped with respect to a ball} $B_r$ if for any point $y \in B_r$, $\Omega$ is star-shaped with respect to $y$. Let 
\begin{equation}\label{admissible}
S_r:=\{\Omega:\textit{star-shaped with respect to } B_r(0)\} \hbox{ and } S_{r,R}:=S_r\cap\{\Omega:\Omega\subset B_R(0)\}.
\end{equation}
\end{DEF}

\medskip

\subsection{Preliminary notions of viscosity solutions }

For  $u : L \subset \R^{d} \rightarrow \mathbb{R}$ we denote 
its  semi-continuous envelopes $u_* , u^*: \oL \rightarrow \R$ by 
\begin{align}
\label{eqn:env}
u_*(x) := \lim_{\epsilon \downarrow 0} \inf_{|x-y| < \epsilon, \atop y \in L } u(y)\quad \hbox{ and } u^*(x) := \lim_{\epsilon \downarrow 0} \sup_{|x-y| < \epsilon, \atop y \in L} u(y).
\end{align}

\medskip

 For a sequence of functions $\{ u_k \}_{k \in \natural}$ on $Q$, 
\begin{align}
\label{eqn:sups}
\limsups_{k \rightarrow \infty}u_k(x,t) &:= \lim\limits_{j \to \infty} \sup \left\{ u_k(y,s) : k \geq j,\quad |y-x| \leq \frac{1}{j}, \quad |s-t| \leq \frac{1}{j} \right\},\\
\liminfs_{k \rightarrow \infty}u_k(x,t) &:= \lim\limits_{j \to \infty} \inf \left\{ u_k(y,s) : k \geq j,\quad |y-x| \leq \frac{1}{j}, \quad |s-t| \leq \frac{1}{j} \right\}.
\end{align}

\medskip

In the level set formation, $\Omega_t$ is given by $\Omega_t(u):=\{ x \in \R^n : u(x,t) > 0\}$ where $u : Q \rightarrow \R$ solves the following equation:
\begin{align}
\label{main}
u_t = F(Du,D^2 u) + \lt|Du|
\end{align}
where $F: ( \R^n \setminus \{0\})  \times \mathcal{S}^{n \times n} \rightarrow \R$
is given by
\begin{align}
\label{eqn:F}
F(p,X) := \tr\left( \left(I - \dfrac{p}{|p|} \otimes \dfrac{p}{|p|}\right)X \right).
\end{align}
with initial data
\begin{equation}\label{initial}
u(x,0) = u_0(x) := \chi_{\Omega_0} - \chi_{\Omega_0^C} \hbox{ for } x \in \mathbb{R}^n.
\end{equation}

\medskip

Let us recall definitions of viscosity solutions of \eqref{main} and \eqref{modeld} with fixed $\lam \in C(\Rpz)$.

\begin{DEF}
\label{def:vis}
\cite[Definition 2.1]{CCG91}, \cite[Definition 6.1]{Barles:2013gx} 
\begin{itemize}
\item
A function $u: Q\to \mathbb{R}$ is \textit{a viscosity subsolution} of \eqref{main} if  $u^*<\infty$ and for any $\phi \in \C^{2,1}(Q)$ that touches $u^* $ from above at $(x_0,t_0)$ we have
\begin{align}
\label{eqn:vis}
\phi_t(x_0,t_0) \leq  F^*(D\phi(x_0,t_0), D^2\phi(x_0,t_0)) + \lambda(t_0)|D\phi(x_0,t_0)|.
\end{align}
where $F$ is given in \eqref{eqn:F}. 
Also, $u^*$ and $F^*$ are given in \eqref{eqn:env}.
\item 
A function $u: Q\to \mathbb{R}$ is \textit{a viscosity supersolution} of \eqref{main} if $u_*>-\infty$ and for any $\phi \in \C^{2,1}(Q)$ that touches $u_* $ from below at $(x_0,t_0)$ we have
\begin{align*}
\phi_t(x_0,t_0) \geq  F_*(D\phi(x_0,t_0), D^2\phi(x_0,t_0)) + \lambda(t_0)|D\phi(x_0,t_0)|.
\end{align*}
where $F$ is given in \eqref{eqn:F}. Also, $u_*$ and $F_*$ are given in \eqref{eqn:env}.
\item 
A function $u: Q\to \mathbb{R}$ is \textit{a viscosity solution} of \eqref{main}-\eqref{initial} (or \eqref{initial_c})
if $u^*$ is \textit{a viscosity subsolution} of \eqref{main} and $u_*$ is \textit{a viscosity supersolution} of \eqref{main}, and if $u^*= {(u_0)}^*$ and $u_*= {(u_0)}_*$ at $t=0$. 
\item
For any $\lam \in C(\Rpz)$, 
$\ott$ be \textit{a viscosity solution (subsolution or supersolution, respectively) } of 
\begin{align}
\label{modelg}
V=-H + \lam(t)
\end{align}
if $u:= \chi_{\ot} - \chi_{\ot^c}$ is \textit{a viscosity solution (subsolution or supersolution, respectively)} of \eqref{main}-\eqref{initial}.
\end{itemize}
\end{DEF}

\medskip

We also recall definitions of classical solutions and equivalent definitions of viscosity solutions of \eqref{main} with fixed $\lam \in C(\Rpz)$.

\begin{DEF}
\label{def:ssol}
$\,$
Consider a cylinder $D_r \subset Q$
and $F$ given in \eqref{eqn:F}.
\begin{itemize}
\item
A function $\phi \in C^{2,1}(D_r)$ is \textit{a classical subsolution} in $D_r$ of \eqref{main} if it holds that
\begin{align}
\label{eqn:1ssol}
\phi_t \leq F_*(D\phi,D^2\phi) + \lambda |D\phi| \hbox{ in } D_r.
\end{align}
\item
A function $\phi \in C^{2,1}(D_r)$ is \textit{a classical supersolution} in $D_r$ of \eqref{main} if it holds that
\begin{align}
\label{eqn:2ssol}
\phi_t \geq F^*(D\phi,D^2\phi) + \lambda |D\phi| \hbox{ in } D_r.
\end{align}
\item
We say that $\phi \in C^{2,1}(D_r)$ is  \textit{a classical strict subsolution (supersolution, respectively)} on $D_r$ of \eqref{main} if the strict inequality of \eqref{eqn:1ssol} (\eqref{eqn:2ssol}, respectively) holds in $D_r$
\end{itemize}
\end{DEF}

\reservespace{
\begin{DEF}
\label{def:visd}
\cite[Definition 7.2]{CafSal05}
\begin{itemize}
\item
A function $u: Q\to \mathbb{R}$ is \textit{a viscosity subsolution} of \eqref{main} if  $u^*<\infty$ and for $D_r \subset Q$ and for every classical strict supersolution $\phi \in C^{2,1}(D_r)$, $u^* < \phi$ on $\partial_p D_r$ implies $u^* < \phi$ in $\overline{Q}$.
\item A function $u: Q\to \mathbb{R}$ is \textit{a viscosity supersolution} of \eqref{main} if $u_*>-\infty$ and $D_r \subset Q$ and for every classical strict subsolution $\phi \in C^{2,1}(D_r)$, $u_* > \phi$ on $\partial_p D_r$ implies $u_* > \phi$ in $\overline{Q}$.
\end{itemize}
\end{DEF}
}

\begin{DEF}
\label{def:viss} 
\cite[Definition 2.7]{KimKwo18}
For $\lam \in C(\Rpz)$,
$(\ott,\lam)$ be a viscosity solution of 
\begin{align}
\label{modelgam}
V=-H + \gamma(|\ot|)
\end{align}
where $\gamma : \Rpz \rightarrow \R$ if  $\ott$ is a viscosity solution of \eqref{modelg} with $\gamma(|\ot|)=\lam(t)$.
\end{DEF}

\medskip

\subsection{Viscosity Solutions for $L^1_{loc}$ forcing}
In this section, we develop a notion of viscosity solutions for \eqref{main} for a fixed $\lam$ in $L^1_{loc}([0,\infty))$. Some notations are in order.
For $\gam \in C(\Rpz)$, the sup convolution $\hu(\cdot;\gam)$ and inf convolution $\tu(\cdot;\gam)$ is given by
\begin{align}
\label{eqn:sup}
\hu(x,t; \gam) &:= \sup_{y \in \overline{B}_{\gam(t)}(x)} u(y,t),\\ 
\label{eqn:inf}
\tu(x,t; \gam) &:= \inf_{y \in \overline{B}_{\gam(t)}(x)} u(y,t). 
\end{align}
Note that $\widehat{u^*} = (\widehat{u})^*$ and $\widetilde{u_*}  = (\widetilde{u})_*$.

\begin{DEF} 
\label{def:visl} 
For $\lam \in L^1_{loc}([0,\infty))$, $\Lambda(t):= \int_0^t  \lam(s) ds$ and $F$ given in \eqref{eqn:F},
\begin{itemize}
\item
A function $u: Q \to \mathbb{R}$ is \textit{a viscosity subsolution} of \eqref{main} if  $u^*<\infty$ and for any $0 \leq t_1 < t_2$ and $ \Theta \in \C^1((t_1,t_2)) \cap \C([t_1,t_2])$ such that $\Theta \geq \Lambda$ in $[t_1,t_2]$, a function $\hu = \hu(\cdot; \Theta - \Lam)$ given in \eqref{eqn:sup} is a viscosity subsolution of
\begin{align}
\label{eqn:the}
u_t =  F(Du,D^2 u) + \Theta' |Du|
\end{align}
in $(t_1, t_2) \times \R^n$ in the sense of Definition~\ref{def:vis}.
\item
A function $u: Q \to \mathbb{R}$ is \textit{a viscosity supersolution} of \eqref{main} if  $u_*> -\infty$ and for any $0 \leq t_1 < t_2$ and $\Theta \in \C^1((t_1,t_2)) \cap \C([t_1,t_2])$ such that $\Theta \leq \Lambda$ in $[t_1,t_2]$, a function $\tu = \tu(\cdot; -\Theta+\Lam)$ given in \eqref{eqn:inf} is a viscosity supersolution of \eqref{eqn:the}
in $(t_1, t_2) \times \R^n$ in the sense of Definition~\ref{def:vis}.
\item 
A function $u: Q\to \mathbb{R}$ is \textit{a viscosity solution} of \eqref{main}-\eqref{initial} (or \eqref{initial_c})
if $u^*$ is \textit{a viscosity subsolution} of \eqref{main} and $u_*$ is \textit{a viscosity supersolution} of \eqref{main}, and if $u^*= {(u_0)}^*$ and $u_*= {(u_0)}_*$ at $t=0$. 
\end{itemize}
\end{DEF}

Note that if $\lam$ is continuous, then our definition coincides the usual definition. We also define the corresponding notion of viscosity solutions for sets.

\begin{DEF}
\label{def:visls}
For $\lam \in L^1_{loc}([0,\infty))$,
let $\ott$ be \textit{a viscosity solution (subsolution or supersolution, respectively)} of $V=-H+\lam(t)$ if $u:= \chi_{\ot} - \chi_{\ot^c}$ is \textit{a viscosity solution (subsolution or supersolution, respectively)} of \eqref{main}-\eqref{initial}.
\end{DEF}

Recall that the definition of solutions for \eqref{model}, is based on above definition.

\begin{repDEF}{def:model}
For $\lam\in L^1_{loc}([0,\infty))$, the pair $(\ott,\lam)$ be a viscosity solution of \eqref{model} if
$|\Omega_t| = |\Omega_0|$ and $\ott$ is a viscosity solution of $V=-H+\lam(t)$.
\end{repDEF}

\begin{rem}
\label{rem:coi}
Note that for $\lambda \in \C(\Rpz)$, this definition coincides with Definition~\ref{def:vis}. First of all, \cite[Lemma 2.5]{KimKwo18} implies that a viscosity subsolution \resp in the sense of Definition~\ref{def:vis} is that in the sense of Definition~\ref{def:visl}. On the other hand, if $\lambda \in \C(\Rpz)$, then $\Lam \in \C^1(\Rpz)$. Thus, we can choose $\Theta = \Lam$. As $\hu(\cdot ; 0) = \tu(\cdot ; 0) = u$ in $Q$, we conclude that a viscosity subsolution \resp in the sense of Definition~\ref{def:visl} is that in the sense of Definition~\ref{def:vis}.
\end{rem}

In the rest of this section, we develop existence and uniqueness results for \eqref{main}. We first show the comparison principle in Theorem~\ref{thm:comp}, which yields uniqueness (Corollary~\ref{cor:vuni}). Moreover, we show the stability of viscosity solutions of $V=-H+\lam_k(t)$ for $\{ \lam_k \}_{ k \in \natural } \subset L^1_{loc}([0,\infty))$ when a sequence of time integrals of $\lam_k$ converges. This yields existence (Corollary~\ref{cor:vexd}).

\reservespace{
\begin{thm}
\label{thm:comp}
Let $u$ and $v$ be a viscosity subsolution and supersolution of \eqref{main}, respectively, in the sense of Definition~\ref{def:visl}.
If for some $r>0$ and $(x_0,t_0) \in Q$ we have $u^* \leq v_* \hbox{ on } \partial_p D_r(x_0,t_0)$, then
\begin{align}
\label{eqn:2comp}
u^* \leq v_* \text{ on } D_r(x_0,t_0).
\end{align}
\end{thm}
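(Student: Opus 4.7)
The plan is to reduce the $L^1_{loc}$-forcing comparison to the known comparison principle for continuous forcing by exploiting the sup/inf convolution structure built into Definition~\ref{def:visl}. Fix $\epsilon>0$. Since $\Lam$ is absolutely continuous on the compact interval $[t_0-r^2,t_0]$, I can choose $\Theta_\epsilon \in C^1((t_0-r^2,t_0))\cap C([t_0-r^2,t_0])$ with $\Lam\leq \Theta_\epsilon \leq \Lam+\epsilon$. Setting $c_\epsilon:=\max_{[t_0-r^2,t_0]}(\Theta_\epsilon-\Lam)\in[0,\epsilon]$ and $\Xi_\epsilon:=\Theta_\epsilon-c_\epsilon$, we then have $\Xi_\epsilon\leq \Lam\leq \Theta_\epsilon$ with $\Lam-\Xi_\epsilon\leq \epsilon$, and crucially $\Xi_\epsilon'\equiv \Theta_\epsilon'$ since they differ by a constant.

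By Definition~\ref{def:visl} applied with $\Theta=\Theta_\epsilon$, the sup convolution $\hu_\epsilon:=\hu(\,\cdot\,;\Theta_\epsilon-\Lam)$ is a viscosity subsolution of
\[
 w_t=F(Dw,D^2w)+\Theta_\epsilon'(t)\,|Dw|
\]
on $(t_0-r^2,t_0)\times\R^n$ in the classical sense of Definition~\ref{def:vis}, and the inf convolution $\tv_\epsilon:=\tv(\,\cdot\,;\Lam-\Xi_\epsilon)$ is a viscosity supersolution of the \emph{same} equation, because $\Xi_\epsilon'=\Theta_\epsilon'$. Both functions now solve a level-set mean curvature equation with a continuous (indeed $C^0$) forcing, for which the standard comparison principle, already used in \cite{KimKwo18}, applies.

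Next I would transfer the boundary condition $u^*\leq v_*$ on $\partial_p D_r(x_0,t_0)$ to a boundary condition for the pair $(\hu_\epsilon,\tv_\epsilon)$ on a slightly shrunk cylinder. Since the two convolution radii are at most $\epsilon$, upper/lower semicontinuity of $u^*$ and $v_*$ gives, for every $\delta>0$ and all $\epsilon$ small enough, $\hu_\epsilon(x,t)\leq u^*(x,t)+o(1)\leq v_*(x,t)+\delta\leq \tv_\epsilon(x,t)+2\delta$ on $\partial_p D_{r-\sqrt{\epsilon}}(x_0,t_0)$, where the $o(1)$ comes from the shrinking spatial convolution and the closeness of the shrunk cylinder to $\partial_p D_r$. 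Applying the continuous-forcing comparison to $\hu_\epsilon-2\delta$ and $\tv_\epsilon$ on $D_{r-\sqrt{\epsilon}}$ yields $\hu_\epsilon\leq \tv_\epsilon+2\delta$ throughout this cylinder. Letting first $\epsilon\to 0$, so that $\hu_\epsilon\searrow u^*$ and $\tv_\epsilon\nearrow v_*$ pointwise in $D_r$ and $D_{r-\sqrt{\epsilon}}$ exhausts $D_r$, and then $\delta\to 0$, yields \eqref{eqn:2comp}.

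The main obstacle is the boundary-condition transfer in the third paragraph: from $u^*\leq v_*$ on $\partial_pD_r$ one needs a clean uniform inequality between the sup/inf convolutions on a slightly smaller parabolic boundary. The spatial part on $\partial B_{r-\sqrt\epsilon}(x_0)\times[t_0-r^2,t_0]$ follows from uniform continuity of semicontinuous envelopes on compact sets, but the initial slice requires slightly more care since $\sqrt{\epsilon}$ must beat the modulus of continuity of $u^*$ and $v_*$ near the corner $\partial B_r(x_0)\times\{t_0-r^2\}$; a standard argument using a small downward time shift of $\tv_\epsilon$ (together with monotonicity of the equation in $\Theta_\epsilon'$) handles this. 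All other steps are standard once the continuous-forcing comparison is in hand.
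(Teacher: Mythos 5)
Your overall strategy is the same as the paper's: approximate $\Lam$ by a $C^1$ function, use the sup/inf convolutions built into Definition~\ref{def:visl} to turn $u$ and $v$ into a subsolution and a supersolution of the \emph{same} equation with continuous forcing $\Theta'$, and then invoke the known comparison principle for that equation. Your one-sided trick ($\Theta_\epsilon\geq\Lam$ and $\Xi_\epsilon=\Theta_\epsilon-c_\epsilon\leq\Lam$ with $\Xi_\epsilon'=\Theta_\epsilon'$) is a perfectly good variant of the paper's symmetric choice $\Theta\pm\e_1/2$. However, there is a genuine gap in your third paragraph, the transfer of the boundary inequality. You claim $\hu_\epsilon\leq u^*+o(1)$ and $\tv_\epsilon\geq v_*-o(1)$ on a shrunk parabolic boundary, justified by ``uniform continuity of semicontinuous envelopes on compact sets.'' Semicontinuous envelopes are not continuous, let alone uniformly continuous: $u^*$ is only u.s.c.\ and $v_*$ only l.s.c., and in the intended application $u=\chi_{\ot}-\chi_{\ot^c}$ is $\pm1$-valued, so $\hu_\epsilon$ exceeds $u^*$ by the full jump $2$ at every point within distance $\epsilon$ of $\{u=1\}$, no matter how small $\epsilon$ is. Since the hypothesis $u^*\leq v_*$ is imposed only \emph{on} $\partial_p D_r$ and says nothing about interior points that the convolution balls reach, no fixed $\delta$-slack can absorb this, and the chain $\hu_\epsilon\leq u^*+o(1)\leq v_*+\delta\leq\tv_\epsilon+2\delta$ breaks down. (Shrinking the cylinder to $D_{r-\sqrt{\epsilon}}$ only makes this worse, since on $\partial_p D_{r-\sqrt\epsilon}$ you have no hypothesis at all.)

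The correct mechanism, which is the actual content of the paper's proof, is different: first add a small constant to $v$ so that $u^*<v_*$ strictly on $\partial_p D_r$; then prove by a compactness and semicontinuity contradiction argument that this strict inequality survives constant-radius convolutions, i.e.\ there exists $\e_1>0$ with $\hu^*(\cdot;\e_1)<\tv_*(\cdot;\e_1)$ on $\partial_p D_r$ (if not, near-maximizers $y_k,z_k$ of $u^*,v_*$ in shrinking balls around boundary points converge to a common $y^*\in\partial_p D_r$, and u.s.c.\ of $u^*$ together with l.s.c.\ of $v_*$ forces $u^*(y^*)\geq v_*(y^*)$, a contradiction). Only after securing $\e_1$ does one pick $\Theta\in C^1$ with $\|\Theta-\Lam\|_\infty\leq\e_1/2$, so that the convolution radii $\tfrac{\e_1}{2}\pm(\Theta-\Lam)$ lie in $[0,\e_1]$ and the boundary inequality between the convolutions follows from monotonicity of sup/inf convolutions in the radius, not from any continuity of $u^*$ or $v_*$. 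Note also that no limit $\epsilon\to0$ is needed at the end: since $u^*\leq\hu_\epsilon$ and $\tv_\epsilon\leq v_*$ pointwise, the interior inequality between the convolutions immediately yields \eqref{eqn:2comp}. If you replace your boundary-transfer step by this strictification-plus-compactness argument, the rest of your plan goes through.
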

}

\begin{proof}
For simplicity, consider $(x_0,t_0)=(0,r^2)$ and denote $D_r:= D_r(0,r^2) = B_r(0)\times (0,r^2]$. Note that  we may assume the following, by adding a small constant to $v$: 
\begin{align}
\label{eqn:11comp}
u^* < v_* \hbox{ on } \partial_p D_r(x_0,t_0).
\end{align}

\medskip

{ 1.} Let us show that there exists $\e_1>0$ such that
\begin{align}
\label{eqn:comp11}
\hu^*(\cdot; \e_1) < \tv_*(\cdot; \e_1) \hbox{ on } \partial_p D_r.
\end{align}
Suppose that \eqref{eqn:comp11} does not hold for all $\e_1>0$. Then, there exists a sequence $\{ x_k \}_{k \in \natural} \subset \partial_p D_r$ such that 
\begin{align}
\hu^*\left(x_k; \frac{1}{k}\right) \geq \tv_*\left(x_k;  \frac{1}{k}\right).
\end{align}
By the semi-continuity of $u^*$ and $v_*$, there exists $\{ (y_k,z_k) \}_{k \in \natural}$ such that
\begin{align}
\label{eqn:comp12}
|x_k - y_k| \leq \frac{1}{k}, \quad |x_k - z_k| \leq \frac{1}{k}
\end{align}
and
\begin{align}
\label{eqn:comp13}
u^*(y_k) \geq v_*(z_k)
\end{align}

\medskip

By compactness of $\oD_{r+1}$, there exists a subsequence $\{ k_i \}_{i \in \natural}$ and $(y^*,z^*)$ such that $\{ (y_{k_i},z_{k_i}) \}_{k_i \in \natural}$ converges to $(y^*,z^*)$. From \eqref{eqn:comp12} and the closedness of $\pdr$, we conclude that $y^* = z^* \in \pdr$. From \eqref{eqn:comp13} and the semi-continuity of $u^*$ and $v_*$, it holds that
\begin{align}
\label{eqn:comp14}
u^*(y^*) \geq \limsup_{i \to \infty} u^*(y_{k_i}) \geq \liminf_{i \to \infty} v_*(z_{k_i}) \geq v_*(z^*) = v_*(y^*)
\end{align}
This contradicts to \eqref{eqn:11comp}.

\medskip

{ 2.} Note that $\C^1([0,r^2])$ is dense in $\C([0,r^2])$. There exists $\Theta \in \C^1([0,r^2])$ such that
\begin{align}
\label{eqn:comp21}
\sup_{ t \in [0,r^2] } | \Lam(t) - \Theta(t) | \leq \frac{\e_1}{2}.
\end{align}
where $\e_1>0$ is given in Step 1. Then, $\hu^*(\cdot; \frac{\e_1}{2} + \Theta(t) - \Lam(t))$ and $\tv_*(\cdot; \frac{\e_1}{2} - \Theta(t) + \Lam(t))$ are well-defined in $D_r$. Note that $\hu^*$ and $\tv_*$ given above are respectively viscosity subsolution and supersolutions of \eqref{eqn:the}. 

\medskip

From \eqref{eqn:comp21} and \eqref{eqn:comp11}, it holds that
\begin{align}
\hu^*\left(\cdot; \frac{\e_1}{2} + \Theta(t) - \Lam(t)\right) \leq \hu^*(\cdot; \e_1) < \tv_*(\cdot; \e_1) \leq  \tv_*\left(\cdot; \frac{\e_1}{2} - \Theta(t) + \Lam(t)\right).
\end{align}
on $\pdr$. From comparison principle for \eqref{eqn:the} in \cite[Theorem 4.1]{CCG91}, we conclude that
\begin{align}
\hu^*\left(\cdot; \frac{\e_1}{2} + \Theta(t) - \Lam(t)\right) <  \tv_*\left(\cdot; \frac{\e_1}{2} - \Theta(t) + \Lam(t)\right).
\end{align}
on $D_r$, which implies \eqref{eqn:2comp}.
\end{proof}

\medskip

\begin{cor}
\label{cor:vuni}
For $g \in \C(\pdr)$, there is at most one viscosity solution $u$ of \eqref{main} with $u^* = u_*=g$ on $\pdr$ in the sense of Definition~\ref{def:visl}.
\end{cor}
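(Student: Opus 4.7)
The plan is to obtain uniqueness as an immediate corollary of the comparison principle in Theorem~\ref{thm:comp}. Let $u$ and $v$ be two viscosity solutions of \eqref{main} in $D_r$ (in the sense of Definition~\ref{def:visl}) with $u^* = u_* = v^* = v_* = g$ on $\partial_p D_r$. My aim is to show that all four semicontinuous envelopes of $u$ and $v$ coincide on the whole of $D_r$, which pins both solutions down to the same continuous function.

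First I would apply Theorem~\ref{thm:comp} with $u$ playing the role of subsolution and $v$ playing the role of supersolution. Since $u^* = g = v_*$ on $\partial_p D_r$, the boundary hypothesis $u^* \leq v_*$ on $\partial_p D_r$ is met (with equality), and the theorem delivers $u^* \leq v_*$ throughout $D_r$. Swapping the roles, I apply the same theorem with $v$ as subsolution and $u$ as supersolution, using $v^* = g = u_*$ on $\partial_p D_r$, to conclude $v^* \leq u_*$ throughout $D_r$.

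Combining these inequalities with the trivial envelope bounds $u_* \leq u^*$ and $v_* \leq v^*$ yields
\[
u_* \;\leq\; u^* \;\leq\; v_* \;\leq\; v^* \;\leq\; u_* \quad \text{on } D_r,
\]
so $u_* = u^* = v_* = v^*$ on $D_r$. In particular $u$ and $v$ are continuous and equal on $D_r$, which is the uniqueness assertion. There is essentially no obstacle here: the only delicate point is that Theorem~\ref{thm:comp} is stated with a non-strict boundary inequality (the strict separation is built into its proof by a small constant shift), so the two applications above can be made directly without any auxiliary perturbation.
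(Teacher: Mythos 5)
Your argument is correct and is exactly the intended one: the paper states Corollary~\ref{cor:vuni} as an immediate consequence of Theorem~\ref{thm:comp}, obtained by applying the comparison principle twice with the roles of sub- and supersolution exchanged and chaining $u_*\leq u^*\leq v_*\leq v^*\leq u_*$. Your remark that the non-strict boundary inequality suffices (the strict separation being handled inside the proof of Theorem~\ref{thm:comp} by the small constant shift) is also consistent with how the paper set up that theorem.
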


\medskip

Next we develop stability results for $\{ \lambda_k \}_{k \in \natural}$ such that $\{ \Lambda_k \}_{k \in \natural}$ uniformly converges to $\Li$
where
\begin{align}
\label{eqn:Lk}
\{ \lam_k \}_{ k \in \natural \cup \{ + \infty \} } \subset L^1_{loc}([0,\infty)) \hbox{ and } \Lambda_k(t):=\int_{0}^{t}\lambda_k(s)ds \hbox{ for } k \in \natural \cup \{ + \infty \}. 
\end{align}
Note that this gives stability results for weak convergence of $\{ \lambda_k \}_{k \in \natural} \subset L^{p}(\Rpt)$ for any $p \in (1,\infty]$. This results will be used Corollary~\ref{thm:exi}.

\medskip

\begin{thm}
\label{thm:stf}
For $\{ \lam_k \}_{ k \in \natural \cup \{ + \infty \}}$ and $\{ \Lk \}_{k \in \natural \cup \{ + \infty \}}$ given in \eqref{eqn:Lk}, assume that $\{ \Lk \}_{k \in \natural }$ locally uniformly converges to $\Li$. Let $\{ u_k \}_{k \in \natural}$ be a sequence of viscosity subsolutions (supersolutions, respectively) of \eqref{main} with $\lambda = \lambda_k$ for all $k \in \natural$. If $u:= \limsups_{k \rightarrow \infty} u_k<\infty$ ( $u:=\liminfs_{k \rightarrow \infty}u_k > -\infty$, respectively), then $u$ is a viscosity subsolution (supersolutions) of \eqref{main} with $\lam = \li$ in the sense of Definition~\ref{def:visl}.
\end{thm}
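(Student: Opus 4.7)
The plan is to verify Definition~\ref{def:visl} for $u = \limsups_{k\to\infty} u_k$ directly, by exploiting the freedom to shift the admissible test function $\Theta$ by a constant, thereby reducing to a $k$-independent target equation. Fix $0 \leq t_1 < t_2$ and $\Theta \in \C^1((t_1,t_2)) \cap \C([t_1,t_2])$ with $\Theta \geq \Li$ on $[t_1,t_2]$. Setting
\begin{align*}
c_k := \| \Lk - \Li \|_{L^\infty([t_1,t_2])}, \qquad \Theta_k := \Theta + c_k,
\end{align*}
one obtains $\Theta_k \geq \Lk$ on $[t_1,t_2]$ while $\Theta_k' \equiv \Theta'$. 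The hypothesis that $u_k$ is a viscosity subsolution of \eqref{main} with $\lam = \lam_k$, applied via Definition~\ref{def:visl}, then gives that $\hu_k(\cdot; \Theta_k - \Lk)$ is a viscosity subsolution of the $k$-independent equation
\begin{align*}
w_t = F(Dw, D^2 w) + \Theta'(t) |Dw| \qquad \text{on } (t_1, t_2) \times \R^n
\end{align*}
in the sense of Definition~\ref{def:vis}, for every $k$.

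The core technical step is the half-relaxed limit identity
\begin{align*}
\limsups_{k \to \infty} \hu_k(\cdot; \Theta_k - \Lk) = \hu(\cdot; \Theta - \Li),
\end{align*}
where the radii $\Theta_k - \Lk \to \Theta - \Li$ uniformly on $[t_1,t_2]$ by construction of $c_k$. The inequality $\leq$ is a compactness argument: for any approximating sequence $(x_{k_j}, t_{k_j}) \to (x_0, t_0)$, optimizers $y_{k_j} \in \overline{B}_{\Theta_{k_j}(t_{k_j}) - \Lambda_{k_j}(t_{k_j})}(x_{k_j})$ cluster in $\overline{B}_{\Theta(t_0) - \Li(t_0)}(x_0)$, and the definition of $u$ controls their pointwise values. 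For the inequality $\geq$, given an optimizer $z^* \in \overline{B}_{\Theta(t_0) - \Li(t_0)}(x_0)$ of $\hu(x_0, t_0; \Theta - \Li)$ and a sequence $(z_{k_j}, t_{k_j}) \to (z^*, t_0)$ with $u_{k_j}(z_{k_j}, t_{k_j}) \to u(z^*, t_0)$, one \emph{shifts the center} from $x_0$ to a nearby $x_{k_j}$ chosen so that $z_{k_j} \in \overline{B}_{\Theta_{k_j}(t_{k_j}) - \Lambda_{k_j}(t_{k_j})}(x_{k_j})$. The magnitude of the required shift is at most $\max\{0,\, |z_{k_j} - x_0| - (\Theta_{k_j}(t_{k_j}) - \Lambda_{k_j}(t_{k_j}))\}$, which tends to $0$ because the radii converge uniformly to a limit at least $|z^* - x_0|$; thus $x_{k_j} \to x_0$ and $\hu_{k_j}(x_{k_j}, t_{k_j}; \Theta_{k_j} - \Lambda_{k_j}) \geq u_{k_j}(z_{k_j}, t_{k_j}) \to u(z^*, t_0)$.

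With the identity in hand, I invoke the classical stability of viscosity subsolutions under upper half-relaxed limits for the single equation $w_t = F(Dw, D^2 w) + \Theta'(t)|Dw|$, whose coefficient $\Theta'$ is continuous and $k$-independent. This immediately gives that $\hu(\cdot; \Theta - \Li)$ is a viscosity subsolution in the sense of Definition~\ref{def:vis}. Since $\Theta$ was arbitrary among admissible test functions, this verifies Definition~\ref{def:visl} and shows that $u$ is a viscosity subsolution of \eqref{main} with $\lam = \li$. The supersolution case is symmetric: take $\Theta \leq \Li$ and $\Theta_k := \Theta - c_k$, so that $\Theta_k \leq \Lk$ and $-\Theta_k + \Lk \to -\Theta + \Li$ uniformly, and run the parallel argument with inf convolutions $\tu_k(\cdot; -\Theta_k + \Lk)$ in place of sup convolutions.

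The step I expect to be most delicate is the $\geq$ direction of the half-relaxed limit identity in the boundary case $|z^* - x_0| = \Theta(t_0) - \Li(t_0)$: the naive choice of keeping the center fixed at $x_0$ fails when the finite-$k$ radius is strictly smaller than the limit radius, so the center-shifting trick above is essential. The uniform convergence of $\Lk$ to $\Li$, together with the continuity of $\Theta$, is exactly what guarantees the shift vanishes in the limit.
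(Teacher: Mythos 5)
Your argument is correct, and it reaches the conclusion by a different decomposition than the paper. You and the paper share the two key ideas: shifting the test function by a constant (your $c_k$, the paper's fixed $\e_1$) so that it becomes admissible for $\Lk$ without changing $\Theta'$, and using the uniform convergence $\Lk\to\Li$ to control the time-dependent sup-convolution radii. The difference is in how stability for the fixed equation \eqref{eqn:the} is then extracted. You isolate the half-relaxed limit identity $\limsups_k \hu_k(\cdot;\Theta_k-\Lk)=\hu(\cdot;\Theta-\Li)$ (your two-sided compactness argument for it is sound; note that since $u_k$ need not be upper semicontinuous you should take near-optimizers rather than optimizers in the $\leq$ direction, and you should record that $\hu^*<\infty$ follows from $u<\infty$ and upper semicontinuity of $u$) and then invoke the classical Barles--Perthame stability of subsolutions under upper relaxed limits for the single equation \eqref{eqn:the}; since $F$ is singular at $p=0$, the version you need is the one for geometric equations with the envelopes $F^*,F_*$, which is available in the \cite{CCG91} framework the paper already relies on. The paper instead never states the identity or cites an off-the-shelf stability theorem: it works through the equivalent characterization of Definition~\ref{def:visd}, propagates the strict boundary inequality $\hu^*<\phi-2\e_2$ on $\partial_p D_r$ to the approximating $\hu_k^*(\cdot;\e_1+\Theta-\Lk)$ by a compactness/contradiction argument (the analogue of your $\leq$ direction), applies its own comparison principle, Theorem~\ref{thm:comp}, and recovers the inequality for $\hu^*(\cdot;\Theta-\Li)$ from the $\e_1/2$ radius slack (the analogue of your $\geq$ direction). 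Your route is more modular and makes the limit object explicit, at the cost of importing an external stability theorem for singular parabolic operators; the paper's route is longer but self-contained within the tools it has already established. Also, a small remark on your closing paragraph: with your choice of $c_k$ the finite-$k$ radius is never smaller than the limit radius at the same time $t$; the shift of center is needed only because the evaluation times $t_{k_j}$ differ from $t_0$, and your bound on the shift handles exactly that.
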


\begin{proof}
We only show the subsolution part, since the rest can be shown with parallel arguments. Let $\{ u_k \}_{k \in \natural}$ be a sequence of viscosity subsolutions. 
\medskip

{ 1.} Choose any $0 \leq t_1 < t_2$ and $ \Theta \in \C^1((t_1,t_2)) \cap \C([t_1,t_2])$ such that $\Theta \geq \Li$ in $[t_1,t_2]$. Let us show that $\hu(\cdot;\Theta - \Li)$ given in \eqref{eqn:sup} is a viscosity subsolution of \eqref{eqn:the}. From the equivalent definition of viscosity solutions in Definition~\ref{def:visd}, it is enough to show that for any $D_r \subset Q$
\begin{align}
\label{eqn:stf10}
\hu^*(\cdot;\Theta - \Li) < \phi \hbox{ in } D_r
\end{align}
where $\phi \in \C^{2,1}(D_r)$ is a classical strict supersolution of \eqref{eqn:the} given in Definition~\ref{def:ssol} such that 
\begin{align}
\hu^*(\cdot;\Theta - \Li) < \phi \hbox{ on } \partial_p D_r.  
\end{align}
First, as $u < + \infty$ and $u$ is upper semicontinuous, we get $\hu^* < \infty$. Next, by the upper semicontinuity of $u^*$, there exists $\e_2>0$ such that 
\begin{align}
\hu^*(\cdot;\Theta - \Li) < \phi - 3 \e_2 \quad \hbox{ on } \partial_p D_r.
\end{align}
From the upper semicontinuity again, there exists $\e_1>0$ such that 
\begin{align}
\label{eqn:stf20}
\hu^*(\cdot;\e_1 + \Theta - \Li) < \phi - 2 \e_2 \quad \hbox{ on } \partial_p D_r.
\end{align}
By uniform convergence of $\Lk$, there exists $k_1 \in \natural$ such that for all $k > k_1$, it holds that 
\begin{align}
\label{eqn:stf11}
|\Li - \Lk| < \frac{\e_1}{2}
\end{align}
in $[t_1,t_2]$.
By definition, $\hu_k=\hu_k(\cdot ;\e_1 + \Theta - \Lk)$
is a viscosity subsolutions of \eqref{eqn:the} in $(t_1,t_2)$ for all $k > k_1$.

\medskip

{ 2.}  Let us show that there exists $k_2 \in \natural$ such that $k_2 > k_1$ and
\begin{align}
\label{eqn:stf21}
\hu_{k}^*(\cdot ;\e_1 + \Theta - \Lk) < \phi - \e_2 \hbox{ on } \partial_p D_r \hbox{ for all } k \geq k_2.
\end{align}
where $k_1$ is given in Step 1. Suppose that such $k_2$ does not exist. 
Then, there exists a sequence $\{ k_i \}_{i \in \natural}$ converging to infinity and $\{ x_{k_i} \}_{i \in \natural} \subset \pdr$ such that $k_i \geq k_1$ and
\begin{align}
\label{eqn:stf22}
\hu_{{k_i}}^*(x_{k_i};\e_1 + \Theta - \Lki) \geq \phi(x_{k_i}) - \e_2 \hbox{ on } \partial_p D_r \hbox{ for all } i \in \natural.
\end{align}

\medskip

By the upper semi-continuity of $u^*$, there exists $\{ y_{{k_i}} \}_{i \in \natural}$ such that
\begin{align}
\label{eqn:stf221}
|y_{k_i} - x_{k_i} | \leq \e_1 + \Theta - \Lki \quad \hbox{ and } u_{{k_i}}^*(y_{k_i}) \geq \phi(x_{k_i}) - \e_2 
\end{align}
Furthermore, there exists $\{ z_{{k_i}} \}_{i \in \natural}$ such that
\begin{align}
\label{eqn:stf25}
|z_{k_i} - y_{k_i} | \leq \frac{1}{{k_i}} \quad \hbox{ and }
u_{{k_i}}(z_{k_i}) + \e_2 \geq u_{{k_i}}^*(y_{k_i}) 
\end{align}
From \eqref{eqn:stf221} and \eqref{eqn:stf25}, we get
\begin{align}
\label{eqn:stf24}
|z_{k_i} - x_{k_i} | \leq \e_1 + \Theta - \Lki + \frac{1}{{k_i}} \hbox{ and } u_{{k_i}}(z_{k_i}) \geq \phi(x_{k_i}) - 2\e_2
\end{align}
As $\{ x_{{k_i}} \}_{i \in \natural} \subset \pdr$, \eqref{eqn:stf11} and \eqref{eqn:stf24} imply that 
\begin{align}
\{ z_{{k_i}} \}_{i \in \natural} \subset \oD_{\hr} \hbox{ where } \hr= r+2\e_1+\| \Theta - \Li \|_\infty + 1.
\end{align}

\medskip

From compactness of $\oD_{\hr}$, there exists a subsequence $\{ k_{i_j} \}_{i \in \natural}$ and $(x^*,z^*)$ such that $\{ (x_{k_{i_j}},z_{k_{i_j}}) \}_{j \in \natural}$ converges to $(x^*,z^*)$.  \eqref{eqn:stf24} implies that 
\begin{align}
|z^* - x^* | \leq \e_1 + \Theta - \Li 
\end{align}
and
\begin{align}
\label{eqn:stf26}
u(z^*) \geq \limsup_{j \to \infty} u_{k_{i_j}}(z_{k_{i_j}}) \geq \limsup_{j \to \infty} \phi(x_{k_{i_j}}) - 2\e_2 = \phi(x^*) - 2\e_2
\end{align}
This contradicts to \eqref{eqn:stf20} and we conclude \eqref{eqn:stf21}.

\medskip

{ 3.} 
From Step 1 and \eqref{eqn:stf21}, comparison principle in Theorem~\ref{thm:comp} implies that
\begin{align}
\hu_{k}^*(\cdot ;\e_1 + \Theta - \Lk) < \phi -\e_2 \hbox{ in } D_r \hbox{ for all } k \geq k_2.
\end{align}
where $\e_1$ and $\e_2$ are given in \eqref{eqn:stf20}, and $k_2$ is given in \eqref{eqn:stf21}. The above and \eqref{eqn:stf11} imply that
\begin{align}
u_{k}(y) < \phi(x) -\e_2 \hbox{ for all } x \in D_r \hbox{ and } y \in B_{\frac{\e_1}{2} + \Theta - \Li}(x)  \hbox{ for all } k \geq k_2.
\end{align}
and we conclude \eqref{eqn:stf10}.
\end{proof}

Let us construct radial barriers of \eqref{main}.

\begin{lem}
\label{lem:rb}
For $\Lam : \Rpz \to \R$ given by
\begin{align}
\Lambda(t):= \int_0^t \lambda(s) ds
\end{align}
and $c \in \R$,  define $\z^-:Q \rightarrow \R$ and $\z^+ : Q \rightarrow \R$ by
\begin{align}
\label{eqn:1rb}
\z^-(x,t;\Lam,c) &:= -\chi_{ \{ x \in \R^n : |x| < c-\Lam(t) \} }(x)  \hbox{ and } \quad \z^+(x,t;\Lam,c):= \chi_{ \{ x \in \R^n : |x| < c+\Lam(t) \} }(x).
\end{align}
Then, $\z^-$ and $\z^+$ are a viscosity subsolution and supersolution, respectively, of \eqref{main} in the sense of Definition~\ref{def:visl}.
\end{lem}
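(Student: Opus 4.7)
The plan is to verify the subsolution and supersolution conditions of Definition~\ref{def:visl} directly by reducing to an explicit computation with a ball of $C^1$-moving radius. I describe the subsolution case for $\z^-$; the supersolution case for $\z^+$ is dual.

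Fix $0 \leq t_1 < t_2$ and $\Theta \in C^1((t_1,t_2)) \cap C([t_1,t_2])$ with $\Theta \geq \Lam$ on $[t_1,t_2]$, and set $\eta(t) := \Theta(t) - \Lam(t) \geq 0$. A direct geometric computation of \eqref{eqn:sup} gives
\[
\widehat{\z^-}(x,t;\eta) = -\chi_{\{|x| < c - \Theta(t)\}}(x),
\]
since $\z^-(y,t) = 0$ is attained for some $y$ in the closed $\eta(t)$-ball around $x$ if and only if $|x| + \eta(t) \geq c - \Lam(t)$. Because $\Theta'$ is continuous on $(t_1,t_2)$, it suffices to show that this shrinking-ball indicator, with smooth radius $\tilde r(t) := c - \Theta(t)$ satisfying $\dot{\tilde r}(t) = -\Theta'(t)$, is a viscosity subsolution of \eqref{eqn:the} in the classical sense of Definition~\ref{def:vis}.

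Let $\phi \in C^{2,1}$ touch $(\widehat{\z^-})^*$ from above at $(x_0,t_0)$. When $|x_0| \neq \tilde r(t_0)$ the envelope is locally constant, forcing $D\phi = 0$, $\phi_t = 0$, $D^2\phi \geq 0$, and the inequality follows from the fact that $F^*(0,X) \geq 0$ for any $X \geq 0$. The substantive case is $|x_0| = \tilde r(t_0)$, where $(\widehat{\z^-})^* = 0$ on $\{|x| \geq \tilde r(t)\}$ together with $\phi(x_0,t_0) = 0$ forces $\phi \geq 0$ throughout this closed exterior region. Restricting $\phi(\cdot,t_0)$ to the sphere $\{|x| = \tilde r(t_0)\}$ produces a local minimum at $x_0$; parameterizing by geodesics on this sphere yields, with $\nu := x_0/|x_0|$,
\[
D\phi(x_0,t_0) = a\nu \text{ for some } a \geq 0, \qquad D^2\phi(x_0,t_0)(v,v) \geq \tfrac{a|v|^2}{\tilde r(t_0)} \text{ for all } v \perp \nu,
\]
while parameterizing the moving sphere by $\gamma(t) := \tilde r(t)x_0/\tilde r(t_0)$ and exploiting $\phi(\gamma(t),t) \geq 0$ with equality at $t = t_0$ gives $\phi_t(x_0,t_0) = -D\phi \cdot \dot\gamma(t_0) = a\,\Theta'(t_0)$.

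For $a > 0$, summing the tangential Hessian bound produces $F(D\phi, D^2\phi) = \mathrm{tr}((I - \nu\otimes\nu)D^2\phi) \geq (n-1)a/\tilde r(t_0) \geq 0$, whence $\phi_t - \Theta'|D\phi| = a\Theta' - \Theta' a = 0 \leq F$, closing the subsolution condition. For the degenerate case $a = 0$ one has $\phi_t = 0$ while the tangential bound still yields $\mathrm{tr}((I-\nu\otimes\nu)D^2\phi) \geq 0$; evaluating the supremum defining $F^*(0,D^2\phi)$ at $\mu = \nu$ then provides the required nonnegative lower bound. The supersolution argument for $\z^+$ is the mirror image: one computes $\widetilde{\z^+}(\cdot;\Lam-\Theta) = \chi_{\{|x| < c + \Theta(t)\}}$ via the inf-convolution, and a test function touching the lower envelope from below now satisfies reversed inequalities, with a local maximum on the exterior region delivering $F \leq 0$ at boundary points. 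The principal obstacle throughout is the boundary analysis at $|x_0| = \tilde r(t_0)$, especially the degenerate case $a = 0$, which must be absorbed through the semicontinuous envelopes $F^*, F_*$.
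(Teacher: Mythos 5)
Your route is the same as the paper's: compute the sup-convolution explicitly as $\widehat{\z^-}(\cdot\,;\Theta-\Lam)=-\chi_{\{|x|<c-\Theta(t)\}}$, then verify the subsolution inequality for this moving-ball indicator by testing away from and on the moving sphere. Your boundary analysis (gradient $D\phi=a\nu$ with $a\ge 0$, tangential Hessian bound $D^2\phi(v,v)\ge a|v|^2/\tilde r(t_0)$, time derivative along $\gamma(t)=\tilde r(t)x_0/\tilde r(t_0)$, and the degenerate case $a=0$ absorbed via $F^*$) is in fact a more explicit version of what the paper dispatches informally by ``comparing the normal velocity and mean curvature'' of $\N_t$ and the sublevel set of $\phi$; in particular your $a=0$ case subsumes the paper's separate ``$x_0$ is a local minimum of $\phi(\cdot,t_0)$'' case.

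There is one case your argument as written does not cover, and which the paper singles out: $x_0=0$. At a touching point with $|x_0|=\tilde r(t_0)$ this means $\tilde r(t_0)=c-\Theta(t_0)=0$ (the sup-convolved ball has just vanished), and then $\nu=x_0/|x_0|$ and the curve $\gamma(t)=\tilde r(t)x_0/\tilde r(t_0)$ are undefined, so your identities $\phi_t=a\Theta'(t_0)$ and the sphere-restriction argument have no meaning. The repair is short but should be stated: since $\widehat{\z^-}(\cdot,t_0)\equiv 0$, the point $x_0=0$ is a spatial local minimum of $\phi(\cdot,t_0)$, giving $D\phi(0,t_0)=0$ and $D^2\phi(0,t_0)\ge 0$; and following the points $\tilde r(t)e$ (any fixed unit vector $e$, for times $t<t_0$ with $\tilde r(t)>0$) or the point $0$ itself (for times with $\tilde r(t)\le 0$), on which $\phi\ge 0=\phi(0,t_0)$, yields $\phi_t(0,t_0)\le 0\le F^*(0,D^2\phi(0,t_0))$, closing this case; the mirrored remark applies to $\z^+$ when $c+\Theta(t_0)=0$. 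A second, harmless imprecision: with the one-sided-in-time notion of touching one only gets the inequalities $\phi_t\le 0$ (locally constant case) and $\phi_t\le a\,\Theta'(t_0)$ (boundary case) rather than the equalities you assert, but these inequalities are all your argument uses, so nothing else breaks.
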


\begin{proof}

Let us show that $\z^-$ is a viscosity subsolution of \eqref{main} only. The respective one can be shown by parallel arguments.

\medskip

Choose any $0 \leq t_1 < t_2$ and $ \Theta \in \C^1((t_1,t_2)) \cap \C([t_1,t_2])$ such that $\Theta \geq \Lambda$ in $[t_1,t_2]$. Let us show that $\hzm(\cdot;\Theta - \Lam)$ given in \eqref{eqn:sup} is a viscosity subsolution of \eqref{eqn:the}.
Note that we have 
\begin{align}
\label{eqn:rb10}
\hzm(x,t ; \Theta - \Lam) =  -\chi_{ \N_t }(x) \hbox{ where } \N_t:= \{ x \in \R^n : |x| < c-\Theta(t) \}
\end{align}
in $Q$.

\medskip

Suppose that $\phi \in \C^{2,1}(Q)$ touches $\hzm$ from above at $(x_0,t_0)$. First, consider the case $|x_0| \neq c - \Theta(t_0)$. In this case, as $\N_t$ given in \eqref{eqn:rb10} moves continuously in time, $\hzm$ is constant near $(x_0,t_0)$. Thus, it holds that
\begin{align}
\label{eqn:rb101}
\phi_t(x_0,t_0) \leq 0, \quad D\phi(x_0,t_0) = 0, \hbox{ and } D^2\phi(x_0,t_0) \geq 0.
\end{align}
The ellipticity of $F$ given in \eqref{eqn:F} and \eqref{eqn:rb101} implies
\begin{align}
\label{eqn:rb11}
\phi_t(x_0,t_0) \leq  F^*(D\phi(x_0,t_0), D^2\phi(x_0,t_0)) + \Theta'(t_0)|D\phi(x_0,t_0)|.
\end{align}

\medskip

Let us consider the case $|x_0| = c - \Theta(t_0)$. If either $x_0=0$ or $x_0$ is a local minimum of $\phi(\cdot, t_0)$, then by the parallel arguments above, we get \eqref{eqn:rb101} and \eqref{eqn:rb11}. Otherwise, both $\N_t$ given in \eqref{eqn:rb10} and a sublevel set $O_t$ of $\phi$ defined by 
\begin{align}
\label{eqn:rb12}
O_t := \{ x \in \R^n : \phi(x,t) < \phi(x_0,t_0) \}
\end{align}
are nonempty near $(x_0,t_0)$. By comparing the normal velocity and mean curvature of the level sets $\N_t$ and $O_t$, we conclude that
\begin{align}
\frac{\phi_t}{|D\phi|}(x_0,t_0) \leq \Theta'(t_0) \hbox{ and } \nabla \cdot \left(\dfrac{D\phi}{|D\phi|} \right)(x_0,t_0) \geq \frac{n-1}{|x_0|}>0.
\end{align}
which implies \eqref{eqn:rb11}.
\end{proof}

Let us recall $\C_{a}$ from \cite{CCG91} for $\N \subset \R^k$, $k \in \natural$ and $a \in \R$,
\begin{align}
\label{eqn:ca}
\C_{a} (\N):= \{g \in \C (\N) : g-a \hbox{ has compact support in } \N \} 
\end{align}
and consider continuous initial data $g \in \C_a(\R^n)$,
\begin{align}
\label{initial_c}
u(x,0) = u_0(x) := g(x) \hbox{ for } x \in \mathbb{R}^n.
\end{align}
such that $\{ x \in \R^n : g(x) > 0 \} = \oz$ and $\{ x \in \R^n : g(x) < 0 \} = (\oz)^C$.

\medskip

\reservespace{
From Theorem~\ref{thm:comp} and Theorem~\ref{thm:stf} combining with radial barriers in Lemma~\ref{lem:rb}, we get existence and uniqueness of \eqref{main} with continuous initial data.
\begin{thm}
\label{thm:vex}
For $T>0$, there is a unique viscosity solution $u$ in $\C_a(Q_T)$ of \eqref{main}-\eqref{initial_c} in the sense of Definition~\ref{def:visl}. 
\end{thm}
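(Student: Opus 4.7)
The plan is to combine the three tools developed in this section—the comparison principle (Theorem~\ref{thm:comp}), the stability of relaxed half-limits (Theorem~\ref{thm:stf}), and the radial barriers (Lemma~\ref{lem:rb})—in a Perron-style approximation scheme based on smoothing the forcing $\lam$. Uniqueness is the easy half: if $u,v \in \C_a(Q_T)$ are two viscosity solutions with the same initial data, then $u-a$ and $v-a$ have compact support uniformly in $t\in[0,T]$, so both can be enclosed in a single parabolic cylinder $D_R \subset Q_T$ on whose lateral boundary both equal the constant $a$. Applying Theorem~\ref{thm:comp} once gives $u^* \leq v_*$, and once more in the reverse direction gives $v^* \leq u_*$, whence $u \equiv v$.

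For existence, first approximate: choose $\lam_k \in \C(\Rpz)$ (e.g.\ mollifications) so that the primitives $\Lk(t) = \int_0^t \lam_k(s)\,ds$ converge uniformly to $\Lam$ on $[0,T]$; this is possible because $\lam \in L^1_{loc}([0,\infty))$ makes $\Lam$ absolutely continuous. For each $k$, Remark~\ref{rem:coi} says Definition~\ref{def:visl} reduces to the classical notion, so the standard theory of \cite{CCG91} yields a unique viscosity solution $u_k \in \C_a(Q_T)$ of \eqref{main}--\eqref{initial_c} with forcing $\lam_k$. Because $g - a$ is supported in some $B_{R_0}$ and $\sup_k \|\Lk\|_{L^\infty([0,T])} \leq M < \infty$, Lemma~\ref{lem:rb} provides radial barriers $\z^\pm(\cdot;\Lk,c)$ which, together with Theorem~\ref{thm:comp}, sandwich each $u_k$ and force $\spt(u_k - a) \subset B_{R_0+M}$ uniformly in $k$ and $t\in[0,T]$.

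Now define the relaxed half-limits $\overline{u} := \limsups_{k\to\infty} u_k$ and $\underline{u} := \liminfs_{k\to\infty} u_k$. The uniform confinement from the previous step keeps $\overline{u}$ finite above and $\underline{u}$ finite below, and Theorem~\ref{thm:stf} then gives that $\overline{u}$ is a viscosity subsolution and $\underline{u}$ a viscosity supersolution of \eqref{main} with forcing $\li = \lam$ in the sense of Definition~\ref{def:visl}. Once I can show $\overline{u}(\cdot,0) \leq g \leq \underline{u}(\cdot,0)$, one final application of Theorem~\ref{thm:comp} on a cylinder enclosing both supports yields $\overline{u} \leq \underline{u}$ on all of $Q_T$; since the reverse inequality is automatic, $\overline{u} = \underline{u}$ is continuous and defines the desired $u \in \C_a(Q_T)$.

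The main obstacle is the initial-trace step. For merely $L^1_{loc}$ forcing one cannot control $\lam$ pointwise in $t$, so the barrier argument must be phrased entirely through the primitives. Concretely, for each $x_0$ and $\epsilon > 0$, continuity of $g$ furnishes a radius $\rho$ on which $|g - g(x_0)| < \epsilon$; shifted versions of $\z^\pm(\cdot;\Lk,c)$ from Lemma~\ref{lem:rb}, placed on the superlevel and sublevel sets of $g$ near $x_0$, then sandwich each $u_k$ in a small parabolic neighborhood of $(x_0,0)$. Their evolution speed is governed by $\|\Lk\|_{L^\infty([0,\tau])}$, which tends to $0$ as $\tau \to 0$ uniformly in $k$ thanks to the uniform convergence $\Lk \to \Lam$ with $\Lam(0)=0$. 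Passing $k\to\infty$ and then $\tau\to 0$ gives the two-sided trace bound and closes the argument.
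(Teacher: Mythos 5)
Your proposal follows essentially the same route as the paper's proof: approximate the forcing so that the primitives converge uniformly on $[0,T]$, solve each approximate problem by the classical theory of \cite{CCG91}, confine the approximations using Theorem~\ref{thm:comp} and the radial barriers of Lemma~\ref{lem:rb}, pass to half-relaxed limits and invoke the stability result Theorem~\ref{thm:stf}, and close with comparison (your direct uniqueness argument is exactly the content of Corollary~\ref{cor:vuni}). The only place you go beyond the paper---which simply asserts $(u^+)^*=(u^-)_*=g$ at $t=0$ without proof---is the initial-trace step; note there that the barriers of Lemma~\ref{lem:rb} have the wrong geometry for a local sandwich (expanding ball as supersolution, shrinking exterior as subsolution), so you need curvature-corrected local barriers whose radius additionally shrinks like $\sqrt{(n-1)\tau}$, a harmless modification since that loss, like $\sup_{[0,\tau]}|\Lambda_k|$, vanishes as $\tau\to 0$.
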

}

\begin{proof} 
As $\C^1([0,T])$ is dense in $\C([0,T])$
, there exists $\{ \Theta_k  \}_{k \in \natural} \subset \C^1([0,T])$ such that $\{ \Theta_k \}_{k \in \natural}$ uniformly converges to $\Lam$ in $[0,T]$. From the existence of viscosity solutions in \cite[Theorem 6.8]{CCG91} of
\begin{align}
\label{eqn:vex11}
u_t =  F(Du,D^2 u) + (\Theta_k)' |Du| &\hbox{ in } Q
\end{align}
with initial data \eqref{initial_c},
there exists a sequence of viscosity solutions $\{ u_k \}_{k \in \natural} \subset \C_a(Q_T)$ of \eqref{eqn:vex11}-\eqref{initial_c}. Here, $F$ and $\C_a$ are given in \eqref{eqn:F} and \eqref{eqn:ca}, respectively.

\medskip

Define
$u^+ := \limsups_{k \rightarrow \infty}u_k \hbox{ and } u^- := \liminfs_{k \rightarrow \infty} u_k$.
As $g \in \C_a(\R^n)$, Theorem~\ref{thm:comp} implies $\|u_k \|_{L^\infty} \leq \| g \|_{L^\infty}$ and thus $\|u^\pm \|_{L^\infty} < +\infty$. Furthermore, by comparing $\{ u_k \}_{k \in \natural}$ with radial barriers $\| g \|_{L^\infty} \z^\pm(\cdot;\Theta_k,c)$ given in Lemma~\ref{lem:rb}  for sufficiently large $c>0$,
we conclude that the supports of $\{ u_k \}_{k \in \natural}$ are uniformly bounded in $Q_T$ for all $k \in \natural$. Thus, $u^\pm - a$ have compactly supports in $Q_T$. 

\medskip

Let us show that
\begin{align}
\label{eqn:vex20}
u^+ = u^- \quad \hbox{ in } Q_T.
\end{align}
First, by definition of $\limsups$ and $\liminfs$ in \eqref{eqn:sups}, it holds that
\begin{align}
\label{eqn:vex13}
u^+ \geq u^- \quad \hbox{ in } Q_T.
\end{align}

\medskip

On the other hand, from the uniform convergence of $\{ \Theta_k \}_{k \in \natural}$ and Theorem~\ref{thm:stf}, $u^+$ and $u^-$ are a viscosity subsolution and supersolution of \eqref{main}-\eqref{initial_c}, respectively. As $(u^+)^* = (u^-)_* = g$ at $t=0$, Theorem~\ref{thm:comp} implies 
\begin{align}
\label{eqn:vex21}
u^+ \leq u^- \quad \hbox{ in } Q_T.
\end{align}
Therefore, we get \eqref{eqn:vex20} from \eqref{eqn:vex13} and \eqref{eqn:vex21}. From Corollary~\ref{cor:vuni}, we conclude that $u^+ (= u^-) $ is a unique viscosity solution in $\C_a(\R^n)$ of \eqref{main}-\eqref{initial_c}. 
\end{proof}

From parallel arguments in \cite[Theorem 2.1]{Barles:1993gaba}, we conclude existence of \eqref{main}-\eqref{initial}.
\begin{cor}
\label{cor:vexd}
There exists a unique viscosity solution of \eqref{main}-\eqref{initial} in the sense of Definition~\ref{def:visl}. 
\end{cor}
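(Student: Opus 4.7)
The plan is to extend Theorem~\ref{thm:vex}, which covers continuous initial data in $\C_a(\R^n)$, to the discontinuous indicator-type datum $u_0 = \chi_{\Omega_0} - \chi_{\Omega_0^C}$ via the standard envelope-approximation scheme of \cite[Theorem 2.1]{Barles:1993gaba}. Fixing $T>0$, I would first choose two sequences $\{g_k^+\}_{k\in\natural}, \{g_k^-\}_{k\in\natural} \subset \C_a(\R^n)$, uniformly bounded by $1$ and with uniformly compact supports, satisfying $g_k^+ \downarrow u_0^*$ and $g_k^- \uparrow (u_0)_*$ pointwise as $k\to\infty$, together with $g_k^- \leq g_k^+$ for every $k$. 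Theorem~\ref{thm:vex} then produces unique viscosity solutions $u_k^\pm \in \C_a(Q_T)$ of \eqref{main} with those initial data, and comparing them with the radial barriers $\z^{\pm}(\cdot;\Lam,c)$ from Lemma~\ref{lem:rb} for sufficiently large $c$ yields uniform $L^\infty$-bounds and uniformly compact supports on $Q_T$.

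Next I would set
\[
u^+ := \limsups_{k \to \infty} u_k^+, \qquad u^- := \liminfs_{k \to \infty} u_k^-.
\]
Because $\lambda$ is fixed and the constant sequence $\Lam_k \equiv \Lam$ trivially converges, the stability result Theorem~\ref{thm:stf} gives that $u^+$ is a viscosity subsolution and $u^-$ a viscosity supersolution of \eqref{main} on $Q_T$ in the sense of Definition~\ref{def:visl}. From $g_k^- \leq g_k^+$ and Theorem~\ref{thm:comp} applied to each pair $(u_k^-, u_k^+)$, one also obtains $u^- \leq u^+$ on $Q_T$.

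The main obstacle is verifying the initial trace, namely $(u^+)^*(\cdot,0) \leq u_0^*$ and $(u^-)_*(\cdot,0) \geq (u_0)_*$, which is delicate because $\limsups$ and $\liminfs$ evaluated at $t=0$ inherently sample values at positive times. The standard remedy is to build local shrinking-sphere barriers at $t=0$ from Lemma~\ref{lem:rb}: at any point $x_0 \notin \overline{\Omega_0}$ one inserts a small radial subsolution centered near $x_0$ to force $(u^+)^*(x_0,0) \leq u_0^*(x_0) = -1$, and symmetrically for $(u^-)_*$ at points in $\Omega_0$, so that the initial inequalities survive the semicontinuous-envelope operations. Once this is established, Theorem~\ref{thm:comp} applied on $Q_T$ gives $u^+ \leq u^-$, so $u^+ = u^- =: u$ is the desired viscosity solution of \eqref{main}-\eqref{initial}. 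Uniqueness is then a direct consequence of Theorem~\ref{thm:comp}, and since $T>0$ was arbitrary, the solution extends to all of $Q$.
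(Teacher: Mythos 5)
Your setup (approximating the datum by continuous functions, solving via Theorem~\ref{thm:vex}, and passing to half-relaxed limits with Theorem~\ref{thm:stf}) is reasonable, but the concluding step has a genuine gap: the inequalities $(u^+)^*(\cdot,0)\leq u_0^*$ and $(u^-)_*(\cdot,0)\geq (u_0)_*$ do \emph{not} put you in a position to apply Theorem~\ref{thm:comp}, because that theorem needs the ordering $(u^+)^*\leq (u^-)_*$ on the parabolic boundary, and at $t=0$ on $\partial\Omega_0$ one has $u_0^*=1>-1=(u_0)_*$, so the two one-sided trace bounds say nothing there. Worse, the asserted conclusion $u^+=u^-$ is simply false whenever $\partial\Omega_0\neq\emptyset$: since $g_k^+\downarrow u_0^*$ and $g_k^-\uparrow (u_0)_*$, the relaxed limits satisfy $u^+=1$ and $u^-=-1$ on the evolving interface (already at $t=0$ on $\partial\Omega_0$), so the upper and lower relaxed limits can only coincide for continuous data --- which is exactly the content of Theorem~\ref{thm:vex}, not of the present corollary. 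The same obstruction undercuts your final sentence: uniqueness for the discontinuous datum \eqref{initial} is \emph{not} a direct consequence of Theorem~\ref{thm:comp}, because two candidate solutions have $u^*\neq u_*$ at $t=0$ on $\partial\Omega_0$ and hence never satisfy the hypothesis of the comparison theorem against each other.

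The paper's (one-line) proof instead invokes the argument of \cite[Theorem 2.1]{Barles:1993gaba}, whose mechanism is different from squeezing two relaxed limits together: one takes the unique \emph{continuous} solution $w$ given by Theorem~\ref{thm:vex} for a continuous initial datum $g$ with $\{g>0\}=\oz$ (e.g.\ a truncated signed distance), uses the geometric invariance of \eqref{main} under relabelings $w\mapsto\theta(w)$ for nondecreasing $\theta$ together with the stability machinery to show that the indicator-type function built from the level sets of $w$ has upper envelope a subsolution and lower envelope a supersolution with the correct initial traces, and then proves uniqueness by comparing an arbitrary solution with relabelings $\theta(w)$ from above and below (not by comparing two discontinuous solutions directly). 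If you want to keep your approximation-of-data scheme, you would still have to insert this relabeling/squeezing step to identify $\{u^-=1\}$ and $\{u^+=1\}$ with the open, respectively closed, evolution determined by $w$, and to extract from that a single function $u$ with $u^*=u^+$, $u_*=u^-$; as written, the proposal stops one step short of the actual construction and proves neither existence nor uniqueness.
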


\begin{rem}\label{coincidence}
As a consequence of Theorem~\ref{thm:stf} and Theorem~\ref{thm:vex}, we conclude that our notion in Definition~\ref{def:visl} coincides with viscosity solutions in \cite{Bou081}. This can be shown by smooth approximations of the operator and stability of each notions under strong $L^1$-convergence of forcing term.
\end{rem}

\medskip

\section{Approximation of Volume Preserving Mean Curvature Flow}

\label{sec:ap}

In this section, we construct a solution of \eqref{main} by \eqref{modeld}. We show that viscosity solutions $(\odt)_{t>0}$ of \eqref{modeld} are equicontinuous in Hausdorff distance, based on the geometric properties of $\odt$.  This yields uniform convergence of $\odt$ in Hausdorff distance in Theorem~\ref{thm:ap}. We will conclude in Section~\ref{sec:l2} that the limit of this strong convergence is a viscosity solution of \eqref{model}. While much of the results in this section follows that of \cite{KimKwo18}, our focus here to obtain uniform estimates that stay independent of $\delta>0$ as $\delta\to 0$.

\medskip

Here is main theorem of this section.

\begin{thm}
\label{thm:ap}
There exists a sequence $\{ \delta_i \}_{i \in \natural}$ such that  $\delta_i \rightarrow 0$ as $i \rightarrow \infty$ and
\begin{align}
d_H(\Omega^{\delta_{i}}_t, \oit) \rightarrow 0
\end{align}
for some $(\oit)_{t\geq0} \subset S_{r_1,R_1}$ locally uniformly in time as $i$ goes to infinity. As a consequence, $|\oit|=1$ for all $t>0$. Here $\odtt$ is a unique solution of \eqref{modeld} given in Proposition~\ref{thm:geo}.
\end{thm}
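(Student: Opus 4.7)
The plan is to apply an Arzel\`a--Ascoli argument in the space of compact subsets of $\R^n$ under Hausdorff distance, using $\delta$-uniform geometric bounds from Proposition~\ref{thm:geo}. That proposition supplies constants $0 < r_1 < R_1 < \infty$ and $\rho_1 > 0$, independent of $\delta > 0$ and $t \geq 0$, such that every $\odt$ satisfies $\rho_1$-reflection and lies in $S_{r_1, R_1}$. In particular each boundary $\p\odt$ is representable as a graph over $\p B_{\rho_1}(0)$ with Lipschitz constant depending only on $r_1, R_1, \rho_1$, so Blaschke's selection principle makes the whole family $\{\odt : \delta > 0,\, t \geq 0\}$ precompact in Hausdorff distance.

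The main step is establishing a $\delta$-uniform modulus of continuity in time: for every $T > 0$ one wants some $\omega_T$ with $\omega_T(0^+) = 0$ satisfying $d_H(\od_{t_1}, \od_{t_2}) \leq \omega_T(|t_2 - t_1|)$ for all $0 \leq t_1, t_2 \leq T$ and all $\delta > 0$. I would obtain this by comparing $\odt$ against the radial sub- and supersolutions $\z^\pm(\,\cdot\,;\Lambda_\delta, c)$ of Lemma~\ref{lem:rb} centered at interior points of $B_{r_1}(0)$: star-shapedness with respect to $B_{r_1}(0)$ converts the Hausdorff displacement of $\p\odt$ between times $t_1$ and $t_2$ into radial in-/out-motion at each boundary point, and the barriers bound that motion by a shrinking-ball rate $(n-1)/r$ plus the integrated forcing $|\Lambda_\delta(t_2) - \Lambda_\delta(t_1)|$. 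Because $\lam_\delta$ is not pointwise bounded uniformly in $\delta$, only an integrated control on $\Lambda_\delta$ is available: either the $L^2$ estimate on $\lam_\delta$ coming from Theorem~\ref{thm:l2} (which yields a H\"older-$1/2$ modulus via Cauchy--Schwarz), or, alternatively, a direct BV-in-time bound on $|\odt|$ inherited from the energy dissipation in the constrained minimizing-movements construction. Establishing such an integrated bound is the chief technical obstacle.

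With equicontinuity in hand, fix a countable dense subset $\{q_j\} \subset [0,\infty)$ and run a diagonal extraction to produce $\delta_i \to 0$ such that $\Omega^{\delta_i}_{q_j} \to \Omega^\infty_{q_j}$ in Hausdorff distance for every $j$, with $\Omega^\infty_{q_j} \in S_{r_1,R_1}$; the equicontinuity then promotes this to locally uniform convergence in $t$ to a trajectory $(\oit)_{t \geq 0} \subset S_{r_1,R_1}$. The uniform Lipschitz parametrization of $\p\Omega^{\delta_i}_t$ ensures that Hausdorff convergence passes to Lebesgue measure: $|\Omega^{\delta_i}_t| \to |\oit|$. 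Finally, the penalty structure $\gd(s) = \delta^{-1}(1-s)$ combined with the energy dissipation of the minimizing-movements scheme (which yields a uniform bound on $\int_0^T \delta^{-1}(1 - |\odt|)^2\, dt$) forces $|\odt| \to 1$ on compact time intervals as $\delta \to 0$, so that $|\oit| = 1$ for every $t > 0$.
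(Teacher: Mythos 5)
Your skeleton (uniform spatial bounds from Proposition~\ref{thm:geo}, a $\delta$-uniform modulus of continuity in time, a diagonal extraction, and volume convergence via the penalization) is the same as the paper's, but the central step --- the $\delta$-uniform time modulus --- is exactly where your argument has a gap, and you flag it yourself as ``the chief technical obstacle'' without closing it. The paper does not obtain this modulus from barriers at all: it works on the constrained discrete scheme, where $\cJ_\delta(E_t)$ is nonincreasing and bounded by $\cJ_\delta(\oz)=\per(\oz)$, so Lemma~\ref{lem:gf} gives $\td^2(E_{t_2},E_{t_1})\le \cK_2\,(t_2-t_1)\,\per(\oz)$, and the interpolation inequality of Lemma~\ref{lem:ineq} (valid precisely because the scheme is constrained to $S_{r_0,R_0}$) converts this into $d_H(\odta,\odtb)\le \cK_3\,(t_2-t_1)^{\frac{1}{n+1}}\per(\oz)^{\frac{1}{n+1}}$ after letting $h\to 0$ via Proposition~\ref{prop:nom} (this is Proposition~\ref{lem:ht}). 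No bound on $\ld$ is needed at this stage, which is the point of the paper's ordering.

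Your barrier route, by contrast, requires an integrated bound on $\ld$ that is uniform in $\delta$, and you do not establish it. The only bound available a priori is $|\ld|\le \delta^{-1}$, so the H\"older estimate of Lemma~\ref{lem:hol} degenerates as $\delta\to 0$; the uniform $L^2$ bound is Theorem~\ref{thm:l2}, whose proof is the most delicate part of the paper (constrained local variations in Lemma~\ref{lem:bdd} and Lemma~\ref{lem:dil}, plus Propositions~\ref{lem:signed} and \ref{lem:el}). Invoking it here would not be literally circular, since its estimate rests on Proposition~\ref{prop:nom} rather than on Theorem~\ref{thm:ap}, but it imports the heaviest estimate of the paper to prove a soft compactness statement, and you would still have to carry out the comparison within the $L^1_{loc}$-forcing framework of Definition~\ref{def:visl}, including a curvature-driven shrinking-ball barrier that Lemma~\ref{lem:rb} does not provide (its barriers track only the forcing). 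Your proposed alternative, a BV-in-time bound on $|\odt|$, does not suffice: since $\ld=\delta^{-1}(1-|\odt|)$, controlling the variation of $|\odt|$ gives no $\delta$-uniform control of $\Lambda_\delta(t)=\int_0^t\ld$ unless you also show $1-|\odt|=O(\delta)$ quantitatively in an integrated sense, which is essentially the missing estimate again. A minor further point: for $|\oit|=1$ you do not need a time-integrated penalization bound; monotonicity of $\cJ_\delta$ along the scheme already gives $(1-|\odt|)^2\le 2\delta\,\per(\oz)$ for every $t$, and Lemma~\ref{lem-cpt} transfers Hausdorff convergence to volume convergence within $S_{r_1,R_1}$, which is how the paper concludes.
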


Let us briefly explain the outline of proof. Based on \cite{KimKwo18}, we first show that for a small $\delta$ \eqref{modeld} is well-posed and $\odt$ is star-shaped with respect to a ball (See Definition~\ref{def:star}) in Proposition~\ref{thm:geo}. In Proposition~\ref{lem:ht}, based on geometric properties in Lemma~\ref{lem:ineq}, we show that $\odt$ is h\"{o}lder continuous with respect to time. Then, by the equicontinuity of $(\odt)_{t\geq0}$ with respect to both time and space, there exists a converging subsequence. 

\medskip

\reservespace{
\begin{prop} 
\label{thm:geo}
Let $\delta \in (0,\delta_0)$ for $\delta_0$ given in \eqref{eqn:d0}.
\begin{enumerate}
\item
There exists $r_1= r_1(\oz)$ and $R_1=R_1(\oz)>0$ such that $\odt \in S_{r_1,R_1}$ for all $t\geq0$.
\item
There exists a unique viscosity solution $((\odt)_{t\geq0},\ld)$ of \eqref{modeld} that is bounded and has smooth boundaries. 
\end{enumerate}
\end{prop}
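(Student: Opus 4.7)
The construction and analysis closely mirror \cite{KimKwo18}; the essential point is to produce bounds uniform in $\delta \in (0,\delta_0)$, so that one can later pass $\delta \to 0$ in Theorem~\ref{thm:ap}.

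\textbf{Part (1): geometric structure.} To prove preservation of $\rho$-reflection I would run the moving-planes argument of \cite{KimKwo18}: since $\ld(t) = \gd(|\odt|)$ is spatially constant at each time, for every $\nu \in S^{n-1}$ and $s > \rho$ the reflected set $\Psi_{\Pi_\nu(s)}(\odt \cap \Pi^+_\nu(s))$ evolves by the same equation $V = -H + \ld(t)$ as $\odt \cap \Pi^-_\nu(s)$ with the same forcing, and the comparison principle for continuous $\ld$ propagates the initial reflection inclusion for all $t > 0$. In particular $B_\rho(0) \subset \odt$, so one may take $r_1 := \rho$.

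For the upper bound $R_1$ I would exploit the gradient-flow structure of \eqref{modeld}: a direct computation shows that along the smooth flow
\begin{equation*}
E_\delta(\Omega) := \per(\Omega) + \frac{(|\Omega|-1)^2}{2\delta}
\end{equation*}
is dissipated (indeed $\frac{d}{dt}E_\delta(\odt) = -\int_{\partial \odt} V^2\,dS$), and this inequality is preserved by the constrained minimizing-movement scheme of the forthcoming Section~\ref{sec:l2}. Hence $E_\delta(\odt) \leq E_\delta(\Omega_0) = \per(\Omega_0)$, which yields $\big||\odt| - 1\big| \leq \sqrt{2\delta\,\per(\Omega_0)}$ and $\per(\odt) \leq \per(\Omega_0)$; for $\delta_0$ small we deduce $|\odt| \leq 2$. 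The preserved $\rho$-reflection then gives that $\partial \odt$ admits a Lipschitz radial graph description with Lipschitz constant depending only on $\rho$, so the volume upper bound converts into a diameter bound $\odt \subset B_{R_1}(0)$ with $R_1 = R_1(\rho,\per(\Omega_0))$ independent of $\delta$. Together these yield $\odt \in S_{r_1,R_1}$.

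\textbf{Part (2): well-posedness and smoothness.} With $|\odt|$ trapped in a compact interval bounded away from $0$ and $\infty$, the forcing $\ld(t)$ is bounded (with a bound depending on $\delta$) and, by the Hausdorff continuity of $t \mapsto \odt$ guaranteed by the approximate minimizing movements, continuous in $t$. Thus \eqref{modeld} reduces to the level-set equation $V = -H + \lam(t)$ with $\lam \in \C(\Rpz)$, and existence and uniqueness of a viscosity solution in the sense of Definition~\ref{def:viss} follow by Perron's method combined with the comparison principle for continuous forcing developed in \cite{KimKwo18}. Boundedness of $\odt$ is $R_1$ from Part~(1), and smoothness of $\partial \odt$ follows from the Lipschitz radial-graph structure together with classical parabolic regularity for curvature flows with bounded continuous forcing, again as in \cite{KimKwo18}.

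\textbf{Main obstacle.} The delicate point is that $R_1 = R_1(\Omega_0)$ must be independent of $\delta$: a $\delta$-dependent bound would be immediate from $|\ld| \leq 1/\delta$, but the Hausdorff convergence in Theorem~\ref{thm:ap} requires estimates that survive $\delta \to 0$. The argument genuinely uses \emph{both} the energy inequality (to keep $|\odt|$ near $1$) \emph{and} $\rho$-reflection (to convert a volume bound into a diameter bound); star-shapedness without reflection control, or a volume bound without geometric control, would each fail to yield a uniform diameter bound.
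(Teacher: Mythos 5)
There are genuine gaps. The actual content of the paper's proof is the verification that $\gd$ in \eqref{eqn:gd} satisfies Assumption A of \cite{KimKwo18} for $\delta\in(0,\delta_0)$ --- in particular the lower bound $\gd(|\Omega|)\geq \gd(|B_{5\rho}|)>\frac{n-1}{\rho}$ for all $\Omega\subset\overline B_{5\rho}$, which is exactly what the threshold $\delta_0=\rho(1-|B_{5\rho}|)/(n-1)$ in \eqref{eqn:d0} encodes, together with the Lipschitz continuity and sublinearity of $\gd$. This is what lets one invoke Theorems 1 and 2 of \cite{KimKwo18} for well-posedness, smoothness, and preservation of $\rho$-reflection; your proposal never verifies it, and your $\delta_0$-smallness plays an unrelated role. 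This omission matters structurally: the propagation of $\rho$-reflection (Theorem~\ref{lem:brho}) only holds on the maximal interval on which $\overline{B_\rho}\subset\Omega_t$, and keeping that ball inside is \emph{not} a consequence of the moving-planes step --- it is a separate barrier argument that needs the forcing to exceed $(n-1)/\rho$ when the set is small, i.e.\ precisely the Assumption A bound. Relatedly, your choice $r_1:=\rho$ confuses ``contains $B_\rho$'' with ``star-shaped with respect to $B_\rho$'': by Lemma~\ref{lem:rho}(1), $\rho$-reflection gives star-shapedness only with respect to $B_r$, $r=(\inf_{\partial\Omega}|x|^2-\rho^2)^{1/2}$, which can degenerate to $0$ if you only know $B_\rho\subset\odt$. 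The paper avoids this by using $\overline B_{(1+\beta_1)\rho}\subset\oz$ and the resulting $r_1=\rho(\beta_1^2+2\beta_1)^{1/2}$ from (3.11) of \cite{KimKwo18}.

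The outer bound is also problematic as you set it up. Your derivation of $R_1$ rests on the energy inequality $E_\delta(\odt)\leq\per(\Omega_0)$, justified by the dissipation identity along the smooth flow and by monotonicity of the constrained minimizing-movement scheme. But at this stage neither ingredient is available: smoothness of the flow is part of Part~(2) of the very statement being proved, and the identification of $\odt$ with the minimizing-movement limit is Proposition~\ref{prop:nom}, whose proof (Appendix B, via Lemma~\ref{lem:ss}) itself uses the $S_{r_0,R_0}$ bounds of this proposition --- so the argument is circular in the paper's logical order. The paper's route needs no energy estimate: since $\gd(s)=(1-s)/\delta$, one has $\ldt<0$ as soon as $|\odt|>1$, and Lemma~\ref{lem:4rho} converts $\sup_{x\in\partial\odt}|x|\geq R_1:=5\rho+w_n^{1/n}$ into a volume larger than $1$; a comparison/barrier argument with the static ball $B_{R_1}$ (for the equation with prescribed $\lam$) then gives $\odt\subset B_{R_1}$ uniformly in $\delta$. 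Your geometric conversion of a volume bound into a diameter bound via the $4\rho$ oscillation estimate is fine, but the source of the volume bound should be the sign of the forcing, not an energy inequality you cannot yet use.
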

}

\begin{proof}
First, let us show that $\gd$ given in \eqref{eqn:gd} satisfies \cite[Assumption A]{KimKwo18} for all $\delta \in \left( 0, \delta_0\right)$.
As $\gd$ is a decreasing function, it holds that
\begin{align}
\gd(|\Omega|) \geq \gd(|B_{5\rho}|) > \frac{n-1}{\rho} 
\hbox{ for all } \Omega \subset \overline{B}_{5\rho} \hbox{ and all } \delta \in \left( 0, \delta_0\right)
\end{align}
where
\begin{align}
\label{eqn:d0}
\delta_0:= \frac{ \rho(1 - |B_{5\rho}|)}{n-1}
\end{align}
Note that $1 - |B_{5\rho}| > 0$ from \eqref{as:a}, and thus we get $\delta_0>0$. On the other hand, $\gd$ is Lipschitz continuous and satisfies 
\begin{align}
\label{eqn:geo1}
\limsup\limits_{R\rightarrow \infty}
 \frac{\gd(|B_R|)}{R} = -\infty < \infty. 
\end{align}
Thus, we conclude that $\gd$ satisfies \cite[Assumption A]{KimKwo18} for all $\delta \in \left( 0, \delta_0\right)$.

\medskip

From Theorem 1 and 2 in \cite{KimKwo18} this problem is well-posed and $\odtt$ satisfies $\rr$ for all $\delta \in \left( 0, \delta_0\right)$. Furthermore, \cite[Equation (3.11)]{KimKwo18} implies that $\odtt \subset S_{r_1}$ where
 $r_1=r_1(\oz)$ is given by
\begin{align}
\label{eqn:r1}
r_1 := \rho(\beta_1^2 + 2\beta_1)^{\frac{1}{2}}
\end{align}
for some $\beta_1 >0$ such that $\overline{B}_{(1+\beta_1) \rho} \subset \oz$. On the other hand due to Lemma~\ref{lem:4rho}, $\oz \subset \subset B_{R_1}$ and $\ldt < 0$ if $\sup_{x \in \odt } |x| \geq R_1$, where
\begin{align}
\label{eqn:r1}
R_1 := 5\rho + w_n^{\frac{1}{n}} \hbox{ and } \,\, w_n := |B_1(0)|.
\end{align} 
A barrier argument with $B_{R_1}$ yields that $\odt \subset B_{R_1}$ for all $t>0$ and all $\delta \in \left( 0, \delta_0\right)$.
\end{proof}

The following discrete time scheme is  a simplified version of Definition 5.1 in \cite{KimKwo18}.

\begin{DEF}
\label{def:mmov}
$\,$
\begin{itemize}
\item
\textit{The one-step discrete gradient flow with a time step $h>0$}, $T = T(\cdot ; h, \delta) \subset \R^n$, 
is defined by
\begin{align}
\label{eqn:Jd}
T(E; h, \delta) \in \argmin_{F \in S_{r_0,R_0}} \cJ_\delta(F) + \frac{1}{h}\tilde{d}^2(F,E),\quad \cJ_\delta(\Om) : = \per(\Om) + \frac{1}{2\delta} (1 - |\Om|)^2,
\end{align}
where \textit{pseudo-distance} $\tilde{d}$ 
is given by
\begin{align}\label{def:td}
\tilde{d}(F,E) := \left({\int_{E \triangle F} d(x,\partial E) dx}\right)^{\frac{1}{2}}, 
\end{align}
Here, $r_0$ and $R_0$ are constants such that 
\begin{align}
\label{eqn:r0}
r_0 \in (0,r_1) \hbox{ and } R_0 > R_1 
\end{align}
for $r_1$ and $R_1$ given in Proposition~\ref{thm:geo}
\item
\textit{The discrete gradient flow with a time step $h>0$ and the initial set $E_0$}, $E_t = E_t(h, \delta) \subset \R^n$, can be defined by for $t\in \Rpz$ 
\begin{align}
\label{eqn:mmov}
E_t = E_t(h, \delta) := T^{[t/h]}(E_0;h,\delta).
\end{align}
Here, $T^{m}$ for $m\in \mathbb{N}$ is the $m$th functional power. 
\end{itemize}
\end{DEF}

Now, we show that $(\odt)_{t\geq0}$ can be approximated locally uniformly by above discrete flow. In Lemma~\ref{lem:ss}, we get short-time star-shapedness based on H\"{o}lder continuity of $\odt$ in time. We postpone the proof into Appendix~\ref{ap:ap} as other arguments are parallel to \cite[Theorem 6.8]{KimKwo18}.

\begin{prop}
\label{prop:nom} Let $\delta \in (0,\delta_0)$ for $\delta_0$ given in \eqref{eqn:d0}.
There exists $\{ h_i \}_{i \in \natural}$ such that $h_i \rightarrow 0$ as $i \rightarrow \infty$ and
\begin{align}  \lim \limits_{i \rightarrow \infty} \sup_{t \in [t_1,t_2]}d_H(E_t(h_i,\delta), \odt) = 0 \end{align} for any $0 \leq t_1 < t_2$.
\end{prop}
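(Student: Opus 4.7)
The strategy is a compactness-plus-uniqueness argument along the lines of Luckhaus--Sturzenhecker and \cite{KimKwo18}, adapted to the constrained admissible class $S_{r_0,R_0}$. I proceed in three steps: (i) establish uniform (in $h$) equicontinuity in time of the discrete flow $t \mapsto E_t(h,\delta)$ in Hausdorff distance; (ii) extract a subsequential limit $F_t$ and identify it as a viscosity solution of \eqref{modeld}; (iii) use the uniqueness part of Proposition~\ref{thm:geo} to conclude $F_t = \odt$, from which the stated subsequential convergence follows.

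For step (i), the one-step minimality of $T(\cdot;h,\delta)$ against the competitor $F=E_{kh}$ gives the discrete energy dissipation inequality
\begin{align*}
\tilde{d}^2(E_{(k+1)h}, E_{kh}) \leq h\bigl(\cJ_\delta(E_{kh}) - \cJ_\delta(E_{(k+1)h})\bigr).
\end{align*}
Telescoping yields $\sum_k \tilde{d}^2(E_{(k+1)h}, E_{kh}) \leq h\,\cJ_\delta(E_0)$, and Cauchy--Schwarz promotes this to a $\tfrac{1}{2}$-H\"older bound $\tilde{d}(E_t, E_s) \leq C(\oz)\sqrt{|t-s|+h}$ uniformly in $h$. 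Because every element of $S_{r_0,R_0}$ is the radial graph of a function on $S^{n-1}$ with Lipschitz constant controlled by $R_0/r_0$, the geometric inequalities in Lemma~\ref{lem:ineq} show that on this class $d_H$ is dominated by a fixed power of $\tilde{d}$, so the H\"older bound transfers to $d_H(E_t, E_s) \leq C|t-s|^\alpha$ for some $\alpha>0$ and $|t-s| \geq h$. Since $(S_{r_0,R_0}, d_H)$ is compact, Arzel\`a--Ascoli with a countable dense set of times delivers a subsequence $h_i \to 0$ and a Hausdorff-continuous family $\{F_t\}_{t\geq 0} \subset S_{r_0,R_0}$ with $E_t(h_i,\delta) \to F_t$ locally uniformly in $t$.

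For step (ii), I must verify that $F_t$ solves \eqref{modeld} in the sense of Definition~\ref{def:viss}. This is a local barrier argument: if a smooth test function $\phi$ touches $F_{t_0}$ from, say, above at $(x_0,t_0)$ and the viscosity inequality fails, then a localized compactly supported inward perturbation of $E_{kh}$ for $kh$ close to $t_0$ would strictly lower $\cJ_\delta(\cdot) + h^{-1}\tilde{d}^2(\cdot, E_{(k-1)h})$, contradicting the minimality in \eqref{eqn:Jd}. The delicate point, and the main obstacle, is that this perturbation must stay inside the admissible class $S_{r_0,R_0}$; this is exactly what Lemma~\ref{lem:ss} (short-time preservation of star-shapedness under time-H\"older continuity) is designed to guarantee, and is the reason the proof differs from the unconstrained scheme and is deferred to Appendix~\ref{ap:ap}. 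Hausdorff continuity of $t \mapsto F_t$ together with star-shapedness with respect to $B_{r_0}(0)$ gives continuity of $t \mapsto |F_t|$, so $\gd(|E_{kh(h_i)}|) \to \gd(|F_{t_0}|)$ as $i \to \infty$, making the forcing term pass to the limit. Finally, step (iii) is immediate: by Proposition~\ref{thm:geo}(2), any viscosity solution of \eqref{modeld} lying in $S_{r_0,R_0}$ coincides with $\odt$, so $F_t = \odt$, and the extracted subsequence $h_i$ gives the claimed locally uniform convergence.
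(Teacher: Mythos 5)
Your steps (i) and (iii) are consistent with the paper's machinery: step (i) is essentially Lemma~\ref{lem:gf} plus Lemma~\ref{lem:ineq} and Arzel\`a--Ascoli (though note that $\tilde{d}$ is only a pseudo-distance, so the triangle-inequality chaining you invoke after Cauchy--Schwarz is not automatic; the paper's Lemma~\ref{lem:gf} supplies the needed H\"older-type estimate directly, with a constant depending on $r_0,R_0$). The genuine gap is in step (ii), the consistency argument. You assert that if the viscosity inequality failed at a touching point, a ``localized compactly supported inward perturbation'' of $E_{kh}$ would strictly lower $\cJ_\delta(\cdot)+h^{-1}\tilde{d}^2(\cdot,E_{(k-1)h})$, contradicting minimality, and you claim Lemma~\ref{lem:ss} guarantees admissibility of that perturbation. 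Neither point holds as stated: the minimization in \eqref{eqn:Jd} is constrained to $S_{r_0,R_0}$, the constraint may be active at the minimizer, and an arbitrary localized bump can leave $S_{r_0,R_0}$, in which case it is not a legitimate competitor; moreover Lemma~\ref{lem:ss} is a statement about short-time star-shapedness of the continuous-time viscosity flow, not about admissibility of competitors in the discrete minimization. (The admissible variations the paper does construct, Lemmas~\ref{lem:bdd} and~\ref{lem:dil}, are global dilation-type maps built for the Euler--Lagrange estimate precisely because localized variations are problematic in this class.) You would also need to handle the volume-dependent forcing $\gd(|E_{kh}|)$ inside such a discrete barrier argument, which your sketch does not address.

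The paper's proof of Proposition~\ref{prop:nom} avoids discrete-level consistency altogether. It takes an energy solution $\Xi_t$ (Definition~\ref{def:ene}; the subsequential limit your step (i) produces), considers viscosity solutions of $V=-H+\gd(|\Xi_t|)$ with this \emph{prescribed} forcing starting from the dilated initial data $(1\pm\e)\oz$, and uses the comparison between energy solutions and such viscosity solutions (Lemma~\ref{lem:bar}, imported from \cite{KimKwo18}) to sandwich $\Xi_t$ between them on a short interval $[0,t_1]$; Lemma~\ref{lem:ss} enters exactly here, to verify the hypothesis $\ot^{\e,\pm}\in S_{\hat{r},\hat{R}}$ with $\hat{r}>r_0$ and $\hat{R}<R_0$ that Lemma~\ref{lem:bar} requires. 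Letting $\e\to0$ and invoking uniqueness from \cite{KimKwo18} identifies $\Xi_t$ with the viscosity solution, hence with $\odt$, on $[0,t_1]$, and the argument is then iterated in time. If you want to keep your outline, you must either reproduce this sandwich-and-iterate argument or supply a genuine construction of admissible localized variations in $S_{r_0,R_0}$, which nothing you cite provides.
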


\medskip

Next, we show the H\"{o}lder continuity in time in Proposition~\ref{lem:ht}. Let us recall some results that concern sets in $S_{r,R}$:

\begin{lem}
\label{lem:ineq}
\cite[Lemma C.1]{KimKwo18}
	For $E_1, E_2 \in S_{r,R}$ and $R>r>0$, the following holds for some $\cK_1=\cK_1(r,R)>0$:
\begin{align}
\label{eqn:ineq}
d_H(E_1, E_2)^{n+1} \leq \cK_1   \td^2(E_1, E_2), \quad d_H(E_1, E_2)^{n+1} \leq \cK_1   \td^2(E_1, E_2).
\end{align}
\end{lem}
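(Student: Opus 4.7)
The plan is to exploit the common star-shapedness with respect to $B_r(0)$ in order to produce a ``cone cap'' of controlled size inside $E_1\triangle E_2$ whose $\td$-contribution can be bounded below by $\eta^{n+1}$, where $\eta:=d_H(E_1,E_2)$. Two cases arise depending on which side realizes the Hausdorff distance: either (i) some $x_0\in E_1$ satisfies $d(x_0,E_2)=\eta$, or (ii) some $y_0\in E_2$ satisfies $d(y_0,E_1)=\eta$. Since $E_j\in S_{r,R}$, the cone $C(y):=\{(1-t)z+ty:\ z\in B_r(0),\ t\in[0,1]\}$ is contained in $E_j$ whenever $y\in E_j$, and its solid angle as seen from $y$ is bounded below by some $\omega_0=\omega_0(r,R)>0$ because $|y|\le R$.

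In case (i) the key observation is that $B_\eta(x_0)\cap E_2=\emptyset$, so the cap $K:=C(x_0)\cap B_{\eta/2}(x_0)$ lies inside $E_1\setminus E_2$, has measure $|K|\ge c_1(r,R)\,\eta^n$, and every $p\in K$ satisfies $d(p,\partial E_2)\ge d(p,E_2)\ge\eta/2$. Multiplying these two lower bounds gives $\td^2(E_1,E_2)\ge\frac{c_1}{2}\eta^{n+1}$ at once.

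Case (ii) is the more delicate step, since the integrand $d(\cdot,\partial E_2)$ degenerates near $y_0\in\partial E_2$. Here I would replace $C(y_0)$ by its \emph{inner cone} $C_\theta(y_0)$ of half-opening angle $\theta_0/2$ about the axis from the center of $B_r(0)$ to $y_0$, where $\sin\theta_0\ge r/(r+R)$. A trigonometric estimate yields $d(p,\partial C(y_0))\ge |p-y_0|\sin(\theta_0/2)$ for $p\in C_\theta(y_0)$, and since $C(y_0)\subset E_2$ it follows that $d(p,\partial E_2)\ge c(r,R)\,|p-y_0|$. The inner cap $K':=C_\theta(y_0)\cap B_{\eta/2}(y_0)$ sits inside $E_2\setminus E_1$ (because $B_\eta(y_0)\cap E_1=\emptyset$), so integrating in polar coordinates centered at $y_0$ gives
\[
\td^2(E_1,E_2)\ \ge\ \int_{K'}d(p,\partial E_2)\,dp\ \ge\ c(r,R)\int_0^{\eta/2}s\cdot s^{n-1}\,ds\ \gtrsim\ \eta^{n+1}.
\]
The symmetric inequality in the statement follows from the same argument with the labels of $E_1$ and $E_2$ swapped. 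The main obstacle is the trigonometric bookkeeping in case (ii): one must verify quantitatively that every point of the inner cone lies at a $c(r,R)$-fraction of $|p-y_0|$ away from the lateral surface of the full cone, uniformly in $y_0\in B_R(0)$, and it is precisely this computation that fixes the explicit dependence $\cK_1=\cK_1(r,R)$.
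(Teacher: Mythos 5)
Your argument is sound, and in fact this lemma is not proved in the present paper at all: it is quoted as \cite[Lemma C.1]{KimKwo18}, so there is no in-house proof to compare against. Your cone-cap construction is exactly the kind of argument the star-shapedness hypothesis is designed for, and both cases check out: in case (i) the sector $K=C(x_0)\cap B_{\eta/2}(x_0)$ lies in $E_1\setminus E_2$ with $d(\cdot,\partial E_2)\geq d(\cdot,E_2)\geq \eta/2$ there, and in case (ii) the inner cone of half the opening angle gives $d(p,\partial E_2)\geq c(r,R)\,|p-y_0|$ on the cap of radius $\eta/2$, so the polar integral produces $\eta^{n+1}$. Two small points deserve explicit mention when you write this up. First, since $B_r(0)\subset E_1\cap E_2$, the extremal point satisfies $|x_0|\geq r+\eta$ (resp. $|y_0|\geq r+\eta$); this is what guarantees both that the spherical sector of radius $\eta/2$ at the apex really sits inside the convex hull $C(x_0)$ (the ray in any admissible direction travels at least $|x_0|-r\geq\eta$ before meeting $\overline{B_r(0)}$), and, in case (ii), that the part of $\partial C(y_0)$ lying on $\partial B_r(0)$ is at distance at least $\eta/2\geq |p-y_0|$ from every point of the cap, so that only the lateral surface matters in the estimate $d(p,\partial C(y_0))\geq |p-y_0|\sin(\theta_0/2)$; as stated for the whole inner cone that inequality would fail near $\partial B_r(0)$, but on the cap it is correct. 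Second, the displayed statement repeats the same inequality twice (a typo); the intended pair is with $\td^2(E_1,E_2)$ and $\td^2(E_2,E_1)$, since $\td$ is not symmetric in its arguments, and your closing symmetry remark (swapping the roles of $E_1$ and $E_2$) is precisely what covers the second inequality. With those details filled in, the proof is complete and the constant $\cK_1(r,R)$ comes out of the solid-angle bound $\arcsin(r/R)$ as you indicate.
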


\begin{lem}\cite[Lemma 5.3]{KimKwo18}.
\label{lem:gf}	
For $(E_t)_{t\geq0}$ in Definition \ref{def:mmov}, the following holds for some $\cK_2=\cK_2(r_0,R_0)$ and all $0<t_1<t_2$:
\begin{align}
\td^2(E_{t_2}, E_{t_1}) \leq \cK_2 (t_2-t_1)(\cJ_\delta(E_{t_1}) - \cJ_\delta(E_{t_2})).
\end{align}
\end{lem}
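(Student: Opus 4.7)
The plan is to use the variational characterization of the minimizing movement to derive a one-step energy dissipation inequality, telescope it across the intermediate steps between $t_1$ and $t_2$, and finally apply a Cauchy--Schwarz argument together with an approximate triangle inequality for the pseudo-distance $\td$.

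For the one-step dissipation, since $E_{(k+1)h} = T(E_{kh}; h, \delta)$ is the minimizer in \eqref{eqn:Jd}, plugging the admissible competitor $F = E_{kh} \in S_{r_0, R_0}$ into the functional yields
\[
\cJ_\delta(E_{(k+1)h}) + \frac{1}{h}\td^2(E_{(k+1)h}, E_{kh}) \le \cJ_\delta(E_{kh}),
\]
i.e., $\td^2(E_{(k+1)h}, E_{kh}) \le h\bigl(\cJ_\delta(E_{kh}) - \cJ_\delta(E_{(k+1)h})\bigr)$. Setting $k_j := \lfloor t_j / h \rfloor$ and $N := k_2 - k_1$, summing across $j \in \{k_1, \dots, k_2 - 1\}$ telescopes the right-hand side to
\[
\sum_{j=k_1}^{k_2-1} \td^2(E_{(j+1)h}, E_{jh}) \le h\bigl(\cJ_\delta(E_{t_1}) - \cJ_\delta(E_{t_2})\bigr).
\]

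To upgrade this summed bound to an estimate on $\td^2(E_{t_2}, E_{t_1})$ carrying the correct factor $t_2 - t_1$, I would establish an approximate triangle inequality of the form $\td(E_{t_2}, E_{t_1}) \le \cK \sum_j \td(E_{(j+1)h}, E_{jh})$ with $\cK = \cK(r_0, R_0)$, and then apply Cauchy--Schwarz across the $N$ summands to get
\[
\td^2(E_{t_2}, E_{t_1}) \le \cK^2 N \sum_j \td^2(E_{(j+1)h}, E_{jh}) \le \cK^2 \cdot N h \cdot \bigl(\cJ_\delta(E_{t_1}) - \cJ_\delta(E_{t_2})\bigr).
\]
Since $N h$ differs from $t_2 - t_1$ by at most $h$, this is the claim with $\cK_2 = \cK^2$.

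The main obstacle is the approximate triangle inequality: the pseudo-distance $\td$ is not symmetric, and the weight $d(\cdot, \partial F)$ in $\td^2(E, F) = \int_{E \triangle F} d(x, \partial F)\,dx$ depends on the second argument, so no formal triangle inequality is available. The way out is to exploit the uniform geometry of $S_{r_0, R_0}$: every $E_{jh}$ contains $B_{r_0}$ and sits inside $B_{R_0}$, and by the star-shaped--Lipschitz structure used in \cite{KimKwo18} the boundaries $\partial E_{jh}$ are uniformly Lipschitz in spherical coordinates. Consequently the weights $d(\cdot, \partial E_{jh})$ across different $j$ are mutually comparable up to a constant depending only on $r_0, R_0$. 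Lemma~\ref{lem:ineq} then supplies the quantitative $d_H$--$\td$ conversion needed to bound the weight distortion by $\td$ quantities, and combining these ingredients produces the universal constant $\cK_2 = \cK_2(r_0, R_0)$ appearing in the statement.
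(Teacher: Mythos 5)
Note first that the paper does not prove this lemma at all: it is quoted verbatim from \cite[Lemma 5.3]{KimKwo18}, so there is no in-paper argument to compare against, and your attempt has to be judged on its own. Your skeleton is the standard minimizing-movements one and its first two steps are correct: testing the minimality in \eqref{eqn:Jd} with the competitor $E_{kh}$ gives $\frac1h\td^2(E_{(k+1)h},E_{kh})\le \cJ_\delta(E_{kh})-\cJ_\delta(E_{(k+1)h})$, and telescoping plus Cauchy--Schwarz over the $N$ intermediate steps reduces everything to a chain inequality $\td(E_{t_2},E_{t_1})\le \cK\sum_j\td(E_{(j+1)h},E_{jh})$ with $\cK=\cK(r_0,R_0)$ \emph{independent of $N$}. (A minor bookkeeping point: your final bound is really $\cK^2(Nh)\,\Delta\cJ$ with $Nh\le t_2-t_1+h$, so you only obtain the statement with $t_2-t_1$ replaced by $t_2-t_1+h$, or for $t_1,t_2$ multiples of $h$; since the lemma is only used in Proposition~\ref{lem:ht} and then $h\to0$, this is harmless, but it should be stated honestly.)

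The genuine gap is that the chain inequality itself is never established, and the mechanism you sketch for it does not work. Pointwise comparability of the weights $d(\cdot,\partial E_{jh})$ across different $j$ is false (each weight vanishes on its own boundary and not on the others), and routing through Lemma~\ref{lem:ineq} loses exponents: per step one only has $d_H(E_{(j+1)h},E_{jh})\lesssim \td(E_{(j+1)h},E_{jh})^{2/(n+1)}$, while $\td^2(E_{t_2},E_{t_1})\lesssim_{r_0,R_0} d_H(E_{t_2},E_{t_1})^2$ (via Lemma~\ref{lem-cpt}), so chaining through $d_H$ yields at best $\td(E_{t_2},E_{t_1})\lesssim\sum_j\td_j^{2/(n+1)}$ and, after H\"older, a factor $(t_2-t_1)^{2/(n+1)}$ rather than the linear factor in the statement; likewise, a bare quasi-triangle inequality $\td(E_3,E_1)\le C(\td(E_3,E_2)+\td(E_2,E_1))$ iterates to a constant of size $C^N$, which is useless here. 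What closes the argument is the stronger fact that on $S_{r_0,R_0}$ the pseudo-distance is equivalent to a genuine metric: writing each $E\in S_{r_0,R_0}$ as a radial graph $R_E$ over $S^{n-1}$ (uniformly Lipschitz, $r_0\le R_E\le R_0$), one checks that $d(s\omega,\partial E)$ is comparable, with constants depending only on $r_0,R_0$, to $|s-R_E(\omega)|$, and hence
\begin{align*}
c(r_0,R_0)\,\|R_E-R_F\|_{L^2(S^{n-1})}^2\;\le\;\td^2(F,E)\;\le\;C(r_0,R_0)\,\|R_E-R_F\|_{L^2(S^{n-1})}^2 ,
\end{align*}
using that on each ray $\int_{\min(R_E,R_F)}^{\max(R_E,R_F)}|s-R_E(\omega)|\,ds=\tfrac12|R_E(\omega)-R_F(\omega)|^2$ and $s^{n-1}\in[r_0^{n-1},R_0^{n-1}]$. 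The triangle inequality for the $L^2(S^{n-1})$ norm then gives the chain inequality with a constant uniform in the number of links, and your Cauchy--Schwarz step finishes the proof. Without this identification (or an equivalent substitute), the crucial step of your argument is unsupported.
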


Lemma~\ref{lem:ineq} and Lemma~\ref{lem:gf} imply uniform H\"{o}lder continuity in time with respect to $\delta$.
\begin{prop}
\label{lem:ht}
There exists $\cK_3 = \cK_3(r_0,R_0)$, which is independent on $\delta>0$ such that for all $0< t_1 < t_2$, it holds that
\begin{align}
\label{eqn:1ht}
d_H(\odta, \odtb) \leq \cK_3 (t_2 - t_1)^{\frac{1}{n+1}} \per(\oz)^{\frac{1}{n+1}}.
\end{align}
\end{prop}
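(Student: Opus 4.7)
The plan is to establish the H\"older estimate first for the discrete gradient flow $(E_t(h,\delta))_{t\geq0}$ from Definition~\ref{def:mmov}, and then pass to the limit $h\to 0$ via the uniform convergence supplied by Proposition~\ref{prop:nom}. The three ingredients are: the dissipation estimate along the minimizing movement (Lemma~\ref{lem:gf}), the comparability of $d_H$ with the pseudo-distance $\tilde{d}$ on $S_{r_0,R_0}$ (Lemma~\ref{lem:ineq}), and the fact that the energy is non-increasing along the scheme with $\cJ_\delta(\Omega_0)=\per(\Omega_0)$.

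First I would take $0<t_1<t_2$ and work with the discrete flow $E_t=E_t(h,\delta)$. By construction, $E_t\in S_{r_0,R_0}$ for every $t\geq 0$, so Lemma~\ref{lem:ineq} applies with the constant $\cK_1=\cK_1(r_0,R_0)$ and gives
\begin{equation*}
d_H(E_{t_1},E_{t_2})^{n+1}\leq \cK_1\,\tilde{d}^2(E_{t_1},E_{t_2}).
\end{equation*}
Combining this with Lemma~\ref{lem:gf} yields
\begin{equation*}
d_H(E_{t_1},E_{t_2})^{n+1}\leq \cK_1\cK_2\,(t_2-t_1)\bigl(\cJ_\delta(E_{t_1})-\cJ_\delta(E_{t_2})\bigr).
\end{equation*}
Since the minimizing-movement scheme is energy-decreasing, $\cJ_\delta(E_{t_1})\leq \cJ_\delta(E_0)=\cJ_\delta(\Omega_0)$, and because $|\Omega_0|=1$ the volume penalty term vanishes, so $\cJ_\delta(\Omega_0)=\per(\Omega_0)$. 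Using $\cJ_\delta(E_{t_2})\geq 0$, the energy difference is bounded by $\per(\Omega_0)$, and taking $(n+1)$-th roots produces
\begin{equation*}
d_H(E_{t_1}(h,\delta),E_{t_2}(h,\delta))\leq \cK_3\,(t_2-t_1)^{\frac{1}{n+1}}\per(\Omega_0)^{\frac{1}{n+1}},\qquad \cK_3:=(\cK_1\cK_2)^{\frac{1}{n+1}},
\end{equation*}
which is independent of $h$ and $\delta$.

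Finally I would pass to the limit. By Proposition~\ref{prop:nom} there is a sequence $h_i\downarrow 0$ such that $E_t(h_i,\delta)\to \Omega_t^\delta$ in Hausdorff distance, uniformly on $[t_1,t_2]$. Since $d_H$ is continuous under Hausdorff convergence of both arguments, sending $i\to\infty$ in the discrete estimate delivers exactly \eqref{eqn:1ht} with the same constant $\cK_3=\cK_3(r_0,R_0)$.

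The argument is essentially assembly of tools already established, so there is no deep obstacle; the only point requiring a small check is the uniformity of $\cK_3$ in $\delta$, which follows because $\cK_1$ and $\cK_2$ depend only on $(r_0,R_0)$ (fixed by \eqref{eqn:r0}), and the energy bound $\per(\Omega_0)$ is $\delta$-independent thanks to $|\Omega_0|=1$ making the penalty term in $\cJ_\delta(\Omega_0)$ vanish. Without the unit-volume initialization one would pay a $\delta^{-1}$ cost here, so this normalization is the subtle ingredient that makes the estimate uniform in $\delta$.
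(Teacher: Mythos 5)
Your proposal is correct and follows essentially the same route as the paper: combine the dissipation bound of Lemma~\ref{lem:gf} with the monotonicity of $\cJ_\delta$ and the identity $\cJ_\delta(\Omega_0)=\per(\Omega_0)$ (from $|\Omega_0|=1$), convert $\tilde{d}$ into $d_H$ via Lemma~\ref{lem:ineq} using $E_t(h,\delta)\in S_{r_0,R_0}$, and then pass to the limit $h_i\to 0$ through the uniform convergence of Proposition~\ref{prop:nom}. Your remark on why $\cK_3$ is $\delta$-independent matches the paper's reasoning, so nothing further is needed.
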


\begin{proof}
Note that $\cJ_\delta(E_t)$ is nonnegative and decreases in time from the construction of $E_t$ in Definition~\ref{def:mmov}. Thus, Lemma~\ref{lem:gf} implies that
\begin{align}
\label{eqn:ht1}
\td^2(E_{t_2}, E_{t_1}) \leq \cK_2 (t_2-t_1)(\cJ_\delta(E_{t_1}) - \cJ_\delta(E_{t_2})) \leq \cK_2 (t_2-t_1)\cJ_\delta(\Om_{0})
\end{align}
for all $0< t_1 < t_2$ and $\cK_2=\cK_2(r_0,R_0)$ given in Lemma~\ref{lem:gf}.
Note that $|\oz|=1$ implies
\begin{align}
\label{eqn:ht2}
\cJd(\oz) = \per(\oz) + \frac{1}{2\delta} (1 - |\oz|)^2= \per(\oz)
\end{align}
for all $\delta>0$. From Lemma~\ref{lem:ineq} and \eqref{eqn:ht2}, there exists $\cK_3 = \cK_3(r_0,R_0)$ such that for all $0< t_1 < t_2$
\begin{align}
\label{eqn:ht3}
d_H(E_{t_2}, E_{t_1}) \leq \cK_3 (t_2 - t_1)^{\frac{1}{n+1}} \per(\oz)^{\frac{1}{n+1}}.  
\end{align}

\medskip

As $E_t=E_t(h,\delta)$ converges to $\odt$ locally uniformly as $h \rightarrow 0$ and $M \rightarrow \infty$ from Proposition~\ref{thm:geo}, it holds that
\begin{align}
\label{eqn:ht21}
d_H(E_{t_2}, E_{t_1}) \rightarrow d_H(\odtb, \odta) \hbox{ as } h \rightarrow 0, M \rightarrow \infty.
\end{align}
Thus, from \eqref{eqn:ht3} and \eqref{eqn:ht21}, we conclude \eqref{eqn:1ht}
\end{proof}

\noindent {\bf Proof of Theorem~\ref{thm:ap}}
By Proposition~\ref{thm:geo} and Proposition~\ref{lem:ht}, $(\odt)_{t\geq0}$ are equicontinuity on both space and time. Therefore, there exists a sequence $\{\delta_i\}_{i\in\natural}$ such that 
\begin{align}
d_H(\Omega^{\delta_{i}}_t, \oit) \rightarrow 0
\end{align}
locally uniformly in time as $i$ goes to infinity for some $(\oit)_{t\geq0} \subset S_{r_1,R_1}$. By Lemma~\ref{lem-cpt}, we conclude that $|\oit|=1$ for all $t>0$.
\hfill$\Box$

\medskip

Before we finish this section, we show by example that the constraint $S_{r,R}$ on the geometry of $\odt$ is needed to obtain \eqref{eqn:ineq}.

\begin{ex}
\label{ex:ineq}
Consider $\{ E_\e \}_{\e \in (0,1)}$ defined by
\begin{align}
E_\e:= B_1(0) \cup IC(2 e_1,\e)
\end{align}
Here, $IC(x,r)$ is an interior cone defined in \eqref{eqn:ic} and $e_1$ is a unit vector in the positive $x_1$ direction.
Note that $r>0$ satisfying that $\{ E_\e \}_{\e \in (0,1)} \subset S_r$ does not exist.
It holds that $d_H(B_1(0), E_\e) = 1$ but $|B_1(0) \triangle E_\e| \to 0$ as $\e \rightarrow 0$. As $E_r \subset B_2(0)$ for all $\e \in (0,1)$, it holds that
\begin{align}
\td^2(B_1(0), E_\e)) \leq 4 |B_1(0) \triangle E_\e|.
\end{align}
Thus, $\td(B_1(0), E_\e))$ also converges to zero as $\e \rightarrow 0$. Therefore, $\K_1$ satisfying \eqref{eqn:ineq} for $\{ E_\e \}_{\e \in (0,1)}$ does not exist.

On the other hand, Consider $\{ F_{k} \}_{ k \in \natural}$ defined by $F_{k}:=IC((k+1) e_1,1)$, which are not uniformly bounded. By the direct computation, it holds that
\begin{align}
d_H(F_k, B_1(0)) = k \hbox{ and } \td^2(F_k, B_1(0)) \leq nw_n k^2.
\end{align}
where $w_n$ is a volume of an unit ball in $\R^n$.
Thus, we cannot find $\K_1$ such that \eqref{eqn:ineq} holds for $\{ F_{k} \}_{ k \in \natural}$.
\end{ex}

\section{Uniform $L^2$ Estimates of the Lagrange Multiplier and Existence}
\label{sec:l2}

In this section we establish uniform $L^2$ estimates of $\ld$ which yields the weak $L^2$ limit of $\ld$ in Theorem~\ref{thm:l2}. Combining with the stability of viscosity solutions in Theorem~\ref{thm:stf}, we show the existence of solution of \eqref{model} in Corollary~\ref{thm:exi}. 
Following the outline given in \cite{MugSeiSpa16},  the estimates for our constrained discrete gradient flow defined in \eqref{eqn:mmov}. Our new challenge lies in constructing local variations given in Definition~\ref{def:loc} which stays in our admissible set $S_{r_0,R_0}$ (See Lemma~\ref{lem:bdd} and Lemma~\ref{lem:dil}).

\begin{thm}
\label{thm:l2}
Let $\delta \in (0,\delta_0)$ for $\delta_0$ given in \eqref{eqn:d0} and $\ld$ be given in \eqref{modeld}. There exists $\sigma_1=\sigma_1(r_0,R_0)>0$ such that
\begin{align}
\label{eqn:l2}
\| \ld \|_{L^2(\Rpt)}^2 \leq \sigma_1(\per(\oz) + T)
\end{align}
Here, $r_0$ and $R_0$ are given in \eqref{eqn:r0}. As a consequence, there exists a subsequence $\{\delta_{i_j}\}_{j\in \natural}$ of $\{\delta_{i}\}_{i \in \natural}$ in Theorem~\ref{thm:ap} such that $\{ \lambda_{\delta_{i_j}} \}_{j \in \natural}$ weakly converges to $\lam_\infty$ in $L^2([0,T])$ satisfying \eqref{eqn:l2}. 
\end{thm}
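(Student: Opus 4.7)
The plan is to establish the $L^2$ bound at the discrete level, working with the minimizing-movements $E_{kh}(h,\delta)$ of Definition~\ref{def:mmov} and following the strategy of Mugnai-Seis-Spadaro~\cite{MugSeiSpa16}, then pass to the limit $h\to 0$ via Proposition~\ref{prop:nom}. Define the piecewise constant discrete multiplier $\lambda_\delta^h(t):=\gd(|E_{kh}|)$ on $t\in((k-1)h,kh]$. By continuity of $\gd$ and the uniform convergence $E_{kh}\to\odt$, weak $L^2$ limits of $\lambda_\delta^h$ as $h\to 0$ coincide with $\ld$, so a bound uniform in $h$ will pass to $\ld$ by weak $L^2$ lower semicontinuity. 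The final subsequence statement for $\lambda_\infty$ is then Banach-Alaoglu on the uniformly $L^2$-bounded family $\{\lambda_{\delta_i}\}$, with \eqref{eqn:l2} preserved in the weak limit.

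At each discrete step, I use minimality of $E_{kh}$ in \eqref{eqn:Jd} against admissible one-parameter families $F_s\in S_{r_0,R_0}$ generated by a vector field $X$. Taking $X$ essentially as the identity field (infinitesimal generator of the dilation $x\mapsto(1+s)x$), the variations of $\per$, of $\tfrac{1}{2\delta}(1-|\cdot|)^2$, and of $\td^{2}(\,\cdot\,,E_{(k-1)h})$ combine into a first-variation identity
\begin{align*}
n|E_{kh}|\,\lambda_\delta^h(kh)\;=\;(n-1)\,\per(E_{kh})\,+\,\mathcal{E}_k\,+\,\tfrac{1}{h}\mathcal{D}_k,
\end{align*}
where $\mathcal{D}_k$ is the dissipation derivative and $\mathcal{E}_k$ is a remainder arising from having to truncate $X$ near the two obstacle balls $B_{r_0}$ and $B_{R_0}$. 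Squaring, multiplying by $h$, and summing over $kh\le T$: the star-shapedness with respect to $B_{r_0}$ gives a uniformly Lipschitz radial representation of $\partial E_{kh}$ with constants depending only on $r_0,R_0$, so $\per(E_{kh})/|E_{kh}|\le C(r_0,R_0)$, and the perimeter-plus-remainder terms contribute at most $C(r_0,R_0)\,T$. For the dissipation term, Cauchy-Schwarz bounds $\mathcal{D}_k^2$ by $\per(E_{kh})\cdot\td^{2}(E_{kh},E_{(k-1)h})$ times a boundary factor, and Lemma~\ref{lem:gf} telescopes to $\sum_k \td^{2}(E_{kh},E_{(k-1)h})/h\le \cK_2\cJd(\oz)=\cK_2\,\per(\oz)$, so the dissipation contribution is bounded by $C(r_0,R_0)\,\per(\oz)$. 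Adding yields $\|\lambda_\delta^h\|_{L^2([0,T])}^2\le\sigma_1(\per(\oz)+T)$.

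The main obstacle is the construction of the admissible test field: pure radial dilations $x\mapsto(1+s)x$ break the constraints $B_{r_0}\subset F_s\subset B_{R_0}$ required for $F_s\in S_{r_0,R_0}$, and can destroy star-shapedness with respect to $B_{r_0}$. Lemma~\ref{lem:bdd} and Lemma~\ref{lem:dil} should furnish truncated diffeomorphisms that agree with the identity field away from the two obstacle balls but are corrected near them to remain admissible, while contributing only a lower-order remainder $\mathcal{E}_k$ whose size depends on $r_0,R_0$ rather than on the fine geometry of $E_{kh}$. Because $\partial E_{kh}$ is only Lipschitz a priori, the first-variation identity must be interpreted in the distributional sense of sets of locally finite perimeter, rewriting the curvature integral $\int H\,X\cdot\nu\,d\sigma$ via the first variation of area as an integral of $\mathrm{div}_\tau X$ over the reduced boundary $\partial^* E_{kh}$.
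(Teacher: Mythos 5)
Your overall architecture matches the paper's: discrete Euler--Lagrange bounds obtained from variations admissible in $S_{r_0,R_0}$, the telescoping dissipation estimate of Lemma~\ref{lem:gf}, passage $h\to 0$ via Proposition~\ref{prop:nom}, and Banach--Alaoglu for the weak limit. But there is a genuine gap in how you control the dissipation term. You assert that ``Cauchy--Schwarz bounds $\mathcal{D}_k^2$ by $\per(E_{kh})\cdot\td^{2}(E_{kh},E_{(k-1)h})$ times a boundary factor.'' Cauchy--Schwarz only gives
\begin{align*}
\mathcal{D}_k^2 \;\leq\; \|X\cdot\nu\|_\infty^2\,\per(E_{kh})\int_{\partial E_{kh}} d^2(x,\partial E_{(k-1)h})\,d\sigma ,
\end{align*}
and the passage from this boundary integral of $d^2$ to the bulk quantity $\td^{2}(E_{kh},E_{(k-1)h})=\int_{E_{kh}\triangle E_{(k-1)h}} d(x,\partial E_{(k-1)h})\,dx$ is not a Cauchy--Schwarz step at all: it is precisely Proposition~\ref{lem:signed}, the analogue of Lemma 3.4.1 in \cite{MugSeiSpa16}, which in the star-shaped setting has to be re-proved using the uniform interior/exterior-cone density estimates of Lemma~\ref{lem:den} together with a Besicovitch covering argument. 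Without this comparison the sum over $k$ cannot be closed by Lemma~\ref{lem:gf}, so this missing inequality is the technical heart of the theorem and must be supplied.

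A secondary problem is your ``first-variation identity'' written with an equality sign. The minimization in \eqref{eqn:Jd} is constrained to $S_{r_0,R_0}$, and the admissible families of Lemma~\ref{lem:bdd} and Lemma~\ref{lem:dil} are only one-sided in $s$ (they remain in $S_{r_0,R_0}$ only for $s\in[0,s_1)$, respectively $s\in[0,s_2)$), so minimality yields inequalities, not an identity. The correct route, and the one the paper takes, is two separate one-sided bounds from two distinct global vector fields rather than a truncation of the identity field with a remainder $\mathcal{E}_k$: the outward dilation with $\Psi(x)=x$ (Lemma~\ref{lem:bdd}) gives the upper bound $\ldth \leq \frac{n-1}{r_0} + \frac{R_0}{r_0\per(E_t)}\frac1h\int_{\pet}\sd(x,\partial E_{t-h})\,d\sigma$, while the inward deformation of Lemma~\ref{lem:dil}, whose initial velocity $\Psi(x)=-(|x|^2-r_0^2)x$ vanishes on $\partial B_{r_0}$ and preserves star-shapedness, gives the matching lower bound. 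With these two corrections (plus the isoperimetric lower bound $\per(E_t)\geq c_3(r_0)$ to absorb the perimeter factor), the remainder of your argument --- squaring, summing in $k$, telescoping with Lemma~\ref{lem:gf}, transferring the bound from $\ldh$ to $\ld$, and extracting the weak limit --- goes through as in the paper.
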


Before proving the above theorem, let us show the existence of a viscosity solution of \eqref{model}.

\begin{cor}
\label{thm:exi}
$((\oit)_{t\geq0},\li)$ is a viscosity solution (See Definition~\ref{def:model}) of \eqref{model}. Here, $\oitt$ and $\li$ are given in Theorem~\ref{thm:ap} and Theorem~\ref{thm:l2}, respectively. 
\end{cor}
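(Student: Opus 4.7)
The plan is to verify the two conditions in Definition~\ref{def:model}, namely (i) $|\oit|=|\oz|=1$ for all $t>0$, and (ii) $\oitt$ is a viscosity solution of $V=-H+\li(t)$ in the sense of Definition~\ref{def:visls}. Condition (i) is already contained in Theorem~\ref{thm:ap}, so everything hinges on (ii). The strategy is to pass to the limit in the level-set formulation along the subsequence $\{\delta_{i_j}\}$ produced in Theorem~\ref{thm:l2}, using the stability result Theorem~\ref{thm:stf} as the main engine.

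For each $j$, write $u_j := \chi_{\Omega_t^{\delta_{i_j}}} - \chi_{(\Omega_t^{\delta_{i_j}})^c}$ and $\lambda_j := \lambda_{\delta_{i_j}}$. Since $\Omega_t^{\delta_{i_j}}$ has smooth boundary (Proposition~\ref{thm:geo}), $\lambda_j$ is continuous in $t$, so $u_j$ is a viscosity solution of \eqref{main} with forcing $\lambda_j$ in the sense of Definition~\ref{def:vis}, and hence also in the sense of Definition~\ref{def:visl} by Remark~\ref{rem:coi}. To invoke Theorem~\ref{thm:stf} with $\li$ playing the role of the limit forcing, I need the antiderivatives $\Lam_j(t) := \int_0^t \lambda_j(s)\,ds$ to converge locally uniformly to $\Lam_\infty(t) := \int_0^t \li(s)\,ds$. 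Testing the weak $L^2$-convergence $\lambda_j \rightharpoonup \li$ against $\chi_{[0,t]}$ yields pointwise convergence $\Lam_j(t) \to \Lam_\infty(t)$, while the uniform $L^2$-bound of Theorem~\ref{thm:l2} together with Cauchy--Schwarz gives equi-$\tfrac{1}{2}$-H\"older continuity of $\{\Lam_j\}$; Arzel\`a--Ascoli then upgrades pointwise to locally uniform convergence on $[0,\infty)$.

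The remaining step is to identify the half-relaxed limits $u^+ := \limsups_j u_j$ and $u^- := \liminfs_j u_j$ with $u_\infty^*$ and $(u_\infty)_*$, where $u_\infty := \chi_{\oit} - \chi_{(\oit)^c}$. Here the uniform-in-time Hausdorff convergence $\sup_{t \in [0,T]} d_H(\Omega_t^{\delta_{i_j}}, \oit) \to 0$ of Theorem~\ref{thm:ap} combines with the time-equicontinuity of $\oitt$ inherited from Proposition~\ref{lem:ht}: if $x$ lies in the spatial interior of $\oit$, then both ingredients place $x$ strictly inside $\Omega_s^{\delta_{i_j}}$ for all $s$ close to $t$ and all $j$ large, forcing $u^\pm(x,t)=1$; if $x \notin \overline{\oit}$, the same argument run in the complement forces $u^\pm(x,t)=-1$. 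Together with an elementary check on boundary points, this gives $u^+=u_\infty^*$ and $u^-=(u_\infty)_*$. Theorem~\ref{thm:stf} then yields that $u_\infty^*$ is a viscosity subsolution and $(u_\infty)_*$ a viscosity supersolution of \eqref{main} with $\lam=\li$ in the sense of Definition~\ref{def:visl}, and the initial condition holds trivially since $\Omega_0^{\delta}=\oz$ for every $\delta$.

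The principal technical obstacle is the identification of the half-relaxed limits: Hausdorff convergence at a fixed $t$ does not by itself control behavior in arbitrarily small space-time cylinders around $(x,t)$, and one must exploit the uniformity in $t$ of Theorem~\ref{thm:ap} in tandem with the uniform H\"older-in-time control of Proposition~\ref{lem:ht} so that a single modulus governs spatial and temporal perturbations simultaneously. Once this bookkeeping is in place, the rest of the proof is a direct assembly of Theorems~\ref{thm:ap}, \ref{thm:l2}, and~\ref{thm:stf}.
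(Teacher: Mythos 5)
Your argument is correct and is essentially the paper's own proof: the paper likewise combines Proposition~\ref{thm:geo} with Remark~\ref{rem:coi}, the uniform $L^2$ bound of Theorem~\ref{thm:l2} (to get locally uniform convergence of the antiderivatives $\Ld$ to $\Li$), the stability result Theorem~\ref{thm:stf}, and the uniform Hausdorff convergence and volume identity of Theorem~\ref{thm:ap}. Your by-hand identification of the half-relaxed limits with $u_\infty^*$ and $(u_\infty)_*$ is exactly the content of Lemma~\ref{lem:sts}, which the paper cites; note that this step quietly relies on the $S_{r,R}$ regularity (via Lemma~\ref{lem-cpt}) rather than on Hausdorff convergence of the sets alone, since the latter by itself does not force interior points of $\oit$ into the approximating sets.
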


\begin{proof}
Note that $(\Omega^{\delta}_t)_{t\geq0}$ given in Proposition~\ref{thm:geo} is a viscosity solution of $V = -H + \lam^{\delta}(t)$ in the sense of Definition~\ref{def:visls} (See Remark~\ref{rem:coi}). 
The uniform boundedness of $\{ \ld \}_{j \in \natural}$ in $L^2$ given in Theorem~\ref{thm:l2} implies the equicontinuity of $\{  \Ld \}_{j \in \natural}$ where $\Ld(t):=\int_0^t \ld(s)ds$. 
From Arzela-Ascoli Thereom, 
$\{ \Ld(t) \}_{k \in \natural}$ locally uniformly converges to $\Li(t):=\int_0^t \li(s)ds$ along a subsequence. 
From Theorem~\ref{thm:stf}, Theorem~\ref{thm:ap} and Lemma~\ref{lem:sts}, we conclude that $(\oit)_{t\geq0}$ is a viscosity solution of $V = -H + \li(t)$.

\medskip

On the other hand, from Theorem~\ref{thm:ap}, $|\oit| = |\oz|$ for all $t \geq 0$. Thus, we conclude that $((\oit)_{t\geq0},\li)$ is a viscosity solution of \eqref{model}.
\end{proof}

Let us briefly explain the outline of proof. First, in Proposition~\ref{lem:signed}, we show that $\| d(\cdot,\partial E) \|_{L^2(\partial F)}$ is bounded by $\td(F,E)$ given in \eqref{def:td} up to a constant for any sets $E,F \in S_{r,R}$. The proof is based on the density estimates and Besicovitch's Covering Theorem. 

\medskip

On the other hand, we recall the discrete scheme $E_t=E_t(h,\delta)$ in \eqref{eqn:mmov} and define the corresponding Lagrange multiplier $\ldth$ in \eqref{def:ldth}. In Proposition~\ref{lem:el}, we show that the $\ldth$ is bounded by $\frac{1}{h}\| d(\cdot,\partial E_{t-h}) \|_{L^2(\partial E_t)}$ up to a constant. By combining these two propositions with the inequality from Lemma~\ref{lem:gf}, we conclude that $L^2$ norm of $\ldh$ is uniformly bounded.  Here, we construct a local variation (See Lemma~\ref{lem:bdd} and Lemma~\ref{lem:dil}) in order to find the Euler-Lagrange equation. 

\medskip

Here is density estimates for $S_{r,R}$. We postpone the proof into the Appendix \ref{ap:geo} as the proof is classical.
\begin{lem}
\label{lem:den}
For $E \in S_{r,R}$ and $0<r<R$, the following holds: there exists $\e_0 = \e_0(r,R)$, $\eta_i=\eta_i(r,R)$ for $i=1,2$ and $3$
such that for all $\e \in [0,\e_0]$ and $x \in \partial E$
\begin{align}
\label{eqn:1den}
\eta_1 \e^{n} \leq \min \{ | B_\e(x) \setminus E|,  |E \cap B_\e(x)| \}
\end{align}
and
\begin{align}
\label{eqn:2den}
\eta_3 \e^{n-1} \leq \per(E; B_\e(x)) \leq \eta_2 \e^{n-1}
\end{align}
where $$\per(E;F) := \sup \left\{ \int_E \dv T(x) dx : T \in \C^1_c(F;\Rn), \sup_{F}|T| \leq 1 \right\}$$
\end{lem}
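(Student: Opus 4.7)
The plan is to reduce the four inequalities in \eqref{eqn:1den}--\eqref{eqn:2den} to a uniform interior/exterior cone condition at $\partial E$ for $E \in S_{r,R}$, and then invoke standard density arguments for sets with Lipschitz boundary.

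First I would establish that every $x \in \partial E$ admits both an interior and an exterior cone with aperture and height depending only on $r$ and $R$. The interior cone is immediate: since $E$ is star-shaped with respect to $B_r(0)$, we have $\operatorname{conv}(\{x\} \cup B_r(0)) \subset \overline{E}$, and near $x$ this convex hull contains a truncated cone with apex $x$, axis $-x/|x|$, half-angle at least $\theta_0 := \arcsin(r/R)$ (using $|x|\leq R$), and some positive height $h_0 = h_0(r,R)$. For the exterior cone I would argue by contradiction in the direction $+x/|x|$: if some point $y = x + sv$ with $v$ in a narrow cone around $x/|x|$ and $s>0$ small lay in $E$, then star-shapedness with respect to $B_r$ forces every segment $[z,y]$ with $z\in B_r$ into $E$; reparameterizing, the set $\{\tfrac{s}{s+|x|} z + \tfrac{|x|}{s+|x|} y : z \in B_r\}$ sweeps an open ball around $x$ of radius comparable to $sr/(s+|x|)$, contradicting $x\in \partial E$. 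The resulting aperture again depends only on $r,R$.

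With these cone conditions in hand, the volume bounds \eqref{eqn:1den} are immediate: for $\e \leq h_0$ the interior cone intersected with $B_\e(x)$ has volume at least $c_n \sin^{n-1}(\theta_0)\, \e^n$, and the exterior cone gives the analogous bound for $|B_\e(x) \setminus E|$, producing $\eta_1=\eta_1(r,R)>0$. For the upper perimeter bound I would use that $E \in S_{r,R}$ implies $\partial E$ is a radial graph $\{f(\omega) \omega : \omega \in S^{n-1}\}$ with $f$ Lipschitz and $\|f\|_{\mathrm{Lip}}$ depending only on $r,R$ (a standard consequence of the interior cone condition). Locally around $x$ this yields a Lipschitz graph representation of $\partial E$ over an $(n-1)$-dimensional disk, so $\H^{n-1}(\partial E \cap B_\e(x)) \leq \eta_2 \e^{n-1}$, and $\per(E;B_\e(x))$ coincides with this surface measure. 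The lower bound then follows from the relative isoperimetric inequality on $B_\e(x)$,
\[
\min\{|E\cap B_\e(x)|,\, |B_\e(x)\setminus E|\}^{(n-1)/n} \leq C(n)\, \per(E;B_\e(x)),
\]
combined with the volume lower bound, which gives $\per(E;B_\e(x))\geq C(n)^{-1}\eta_1^{(n-1)/n}\e^{n-1}$ and yields $\eta_3$.

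The main obstacle will be the exterior cone construction in the first step. The interior cone is essentially a one-line convex-hull argument, but the exterior side uses only the negative information that star-shapedness forbids certain points, and care is needed in the edge case where $|x|$ is close to $r$ (so that $B_r$ is nearly internally tangent to $\partial E$ at $x$): one must track how the aperture degrades and check that it remains bounded below by a positive constant depending only on $r$ and $R$. Once a uniform exterior cone is secured, the remainder is the classical density-estimate machinery for sets with Lipschitz boundary.
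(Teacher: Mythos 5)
Your proposal is correct and takes essentially the same route as the paper's proof: uniform interior/exterior cone conditions at $\partial E$ with aperture $\arcsin(r/|x|)\geq\arcsin(r/R)$ (which the paper simply cites from Lemma~\ref{lem:star}, i.e.\ \cite{FelKim14}, rather than re-deriving), giving \eqref{eqn:1den}; a local Lipschitz-graph representation of $\partial E$ with constant depending only on $r,R$ for the upper bound in \eqref{eqn:2den}; and the relative isoperimetric inequality combined with \eqref{eqn:1den} for the lower bound. The only cosmetic difference is that you reprove the cone conditions from scratch, and the edge case you flag ($|x|$ near $r$) is harmless since the aperture $\arcsin(r/|x|)$ only improves there, the worst case being $|x|=R$.
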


Note that for any $F \subset \R^n$ and $E \subset \R^n$, which has a Lipschitz boundary, it holds that 
\begin{align}
\label{eqn:ph}
P(E;F) = \H^{n-1}(F \cap \partial E)
\end{align}
(See Remark 9.5 and Example 12.6 in \cite{Mag12}).

\medskip

The density estimates in Lemma~\ref{lem:den} and Besicovitch's Covering Theorem in Lemma~\ref{lem:bes} imply the following. A similar inequality was proven for the discrete gradient flow in \cite[Lemma 3.4.1]{MugSeiSpa16}. We extend this results for sets in $S_{r,R}$.

\begin{lem}
\label{lem:bes}
\cite[Theorem 1.27]{EvaGar92}\cite[Theorem 5.1]{Mag12}
(Besicovitch's Covering Theorem)
There exists a positive constant $\xi=\xi(n)$ with the following property: if $\F$ is a family of closed non-degenerate balls of $\R^n$, and the set $\N$ of the centers of the balls in $\F$ is bounded, then there exists at most countable $\F_{1}, \dots, \F_{\xi}$ subfamilies of disjoint balls in $\F$ such that
$$\N \subset \bigcup_{j=1}^{\xi} \bigcup_{B \in \F_j} B.$$
\end{lem}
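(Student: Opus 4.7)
The plan is to recognize Lemma~\ref{lem:bes} as the classical Besicovitch covering theorem, which is not a new result of this paper but is stated for later convenience with citations to \cite{EvaGar92} and \cite{Mag12}. Accordingly I would not reprove it in the body of the text; the cited references already contain complete arguments, and the constant $\xi(n)$ they produce is the one we use. That said, the standard proof proceeds in three steps, which I sketch below.

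First I would extract a countable subfamily from $\F$ by a greedy selection. At stage $k$, pick a ball $B_k=B_{r_k}(x_k) \in \F$ whose center $x_k$ is not contained in any previously chosen ball and whose radius satisfies $r_k \geq \tfrac{1}{2} \sup\{r : B_r(y) \in \F, y \in \N\setminus \bigcup_{i<k}B_i\}$. Boundedness of $\N$ forces $r_k \to 0$, and the choice of radii ensures $\N \subset \bigcup_k B_k$, because any candidate ball $B_r(y) \in \F$ eventually has $r$ larger than twice the supremum, forcing $y$ into some already-chosen ball.

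Second I would establish the key geometric fact: there exists $\xi = \xi(n)$ such that no point $z \in \R^n$ is contained in more than $\xi$ of the selected balls $\{B_k\}$. This is the heart of the argument and the main obstacle; it is a purely combinatorial-geometric statement about configurations of balls in $\R^n$ whose centers satisfy the no-center-in-another-ball condition, and is proved by an angular/sphere-packing argument bounding the number of directions from $z$ compatible with the selection rule. Finally, with the bounded-overlap property in hand, a greedy coloring argument produces $\xi$ subfamilies: process $B_1, B_2, \dots$ in order, and assign each $B_k$ to the least-indexed class $\F_j$, $j \in \{1,\dots,\xi\}$, whose already-assigned balls are disjoint from $B_k$. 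The overlap bound guarantees such a $j$ always exists, and the resulting $\F_1,\dots,\F_\xi$ consist of pairwise disjoint balls whose union covers $\N$.

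The only step that is not essentially bookkeeping is the dimensional overlap bound in the second step, and since this is carefully treated in both \cite[Theorem 1.27]{EvaGar92} and \cite[Theorem 5.1]{Mag12}, I would simply quote their statements and use the constant $\xi(n)$ as a black box in the subsequent applications (in particular in Proposition~\ref{lem:signed}).
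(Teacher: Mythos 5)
Your approach matches the paper exactly: Lemma~\ref{lem:bes} is stated there purely as a citation of the classical Besicovitch covering theorem (\cite{EvaGar92}, \cite{Mag12}) with no proof given, and treating the dimensional constant $\xi(n)$ as a black box for use in Proposition~\ref{lem:signed} is precisely what the paper does. Your sketch of the standard greedy-selection, bounded-overlap, and coloring argument is accurate as an outline, but it is not needed beyond the citation.
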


\begin{prop}
\label{lem:signed}
For $E, F \in S_{r,R}$ and $0<r<R$, the following holds: for some $\sigma_2=\sigma_2(r,R)$
\begin{align}
\label{lem:1signed}
\int_{\partial F} d^2(x,\partial E) d\sigma \leq \sigma_2   \td^2(F, E), 
\end{align}
Here, $\td$ is given in \eqref{def:td}.
\end{prop}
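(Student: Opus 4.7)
The plan is to estimate the left-hand side pointwise through a covering argument. For each $x \in \partial F$ set $s(x) := d(x, \partial E)$, and choose a radius $r(x) := \min\{s(x)/2, \e_0\}$, where $\e_0 = \e_0(r,R)$ is the constant from Lemma~\ref{lem:den}. Since $r(x) < s(x)$, the ball $B_{r(x)}(x)$ does not meet $\partial E$, so it is entirely contained either in $E$ or in $E^{c}$; in particular, $B_{r(x)}(x) \cap (E \triangle F)$ equals either $B_{r(x)}(x) \setminus F$ or $B_{r(x)}(x) \cap F$.

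First I would extract a lower bound on the contribution of each ball to $\tilde{d}^2(F,E)$. By the volume density estimate \eqref{eqn:1den} applied to $F \in S_{r,R}$ at the boundary point $x$, whichever of $B_{r(x)}(x) \cap F$ or $B_{r(x)}(x) \setminus F$ is used has Lebesgue measure at least $\eta_1 r(x)^n$. Moreover, every $y \in B_{r(x)}(x)$ satisfies $d(y,\partial E) \geq s(x) - r(x) \geq s(x)/2$. Hence
\[
\int_{B_{r(x)}(x) \cap (E \triangle F)} d(y,\partial E)\,dy \ \geq\ \eta_1 r(x)^{n} \cdot \tfrac{s(x)}{2}.
\]
Second, I would use the perimeter estimate \eqref{eqn:2den} together with \eqref{eqn:ph} to get $\H^{n-1}(\partial F \cap B_{r(x)}(x)) \leq \eta_2 r(x)^{n-1}$, and note that by the triangle inequality every $y \in B_{r(x)}(x)$ has $s(y) \leq s(x) + r(x) \leq 3s(x)/2$. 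This gives
\[
\int_{\partial F \cap B_{r(x)}(x)} s(y)^{2}\,d\sigma(y) \ \leq\ \tfrac{9}{4}\,s(x)^{2}\, \eta_{2}\, r(x)^{n-1}.
\]

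Next, dividing the two inequalities, the key ratio is $\tfrac{9 s(x) \eta_{2}}{2 \eta_{1} r(x)}$. If $s(x) \leq 2\e_0$ then $r(x) = s(x)/2$ and the ratio equals $9\eta_2/\eta_1$; if $s(x) > 2\e_0$ then $r(x) = \e_0$ and I bound $s(x)$ from above using the $S_{r,R}$ constraint (since $\partial E, x \in \overline{B}_R$, one has $s(x) \leq 2R$), giving a ratio bounded by $9 R \eta_2 / (\eta_1 \e_0)$. Let $C_1 = C_1(r,R)$ be the maximum of these two bounds; then
\[
\int_{\partial F \cap B_{r(x)}(x)} s(y)^{2}\,d\sigma(y) \ \leq\ C_1 \int_{B_{r(x)}(x) \cap (E \triangle F)} d(y,\partial E)\,dy \qquad \text{for every } x \in \partial F.
\]

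Finally I would apply Besicovitch's theorem (Lemma~\ref{lem:bes}) to the family $\{B_{r(x)}(x)\}_{x \in \partial F}$, extracting $\xi = \xi(n)$ at-most-countable subfamilies $\F_{1}, \dots, \F_{\xi}$ of pairwise disjoint balls whose union covers $\partial F$. Within each $\F_{j}$ the balls are disjoint, so the lower-bound contributions to $\tilde{d}^{2}(F,E)$ add up, and summing the displayed pointwise inequality over $B \in \F_{j}$ and then over $j = 1, \dots, \xi$ yields
\[
\int_{\partial F} d^2(y,\partial E)\, d\sigma(y) \ \leq\ \xi\, C_1\, \tilde{d}^{2}(F,E),
\]
which is \eqref{lem:1signed} with $\sigma_2 = \xi\, C_1$. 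The step I expect to be most delicate is the choice of $r(x)$ and the case split for bounding the ratio, since the naive choice $r(x) = s(x)/2$ fails when $s(x)$ exceeds the scale at which the density estimates of Lemma~\ref{lem:den} are valid; truncating at $\e_0$ and exploiting the uniform bound $s(x) \leq 2R$ inherent to $S_{r,R}$ is what makes $\sigma_2$ depend only on $r$ and $R$.
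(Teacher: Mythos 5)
Your argument is correct and rests on the same two pillars as the paper's proof -- the density estimates of Lemma~\ref{lem:den} (via \eqref{eqn:ph}) and Besicovitch's covering theorem (Lemma~\ref{lem:bes}) -- but the bookkeeping is organized differently. The paper decomposes space into dyadic shells $\D_i = \{2^i < d(\cdot,\partial E)\le 2^{i+1}\}$, uses a fixed radius $2^{i-1}\delta_0$ within each shell, applies Besicovitch shell by shell, and then absorbs the overlap of a ball with the three adjacent shells at the cost of a factor $3$; you instead attach to each $x\in\partial F$ the point-dependent radius $r(x)=\min\{d(x,\partial E)/2,\e_0\}$, control $d(y,\partial E)$ on the ball by the triangle inequality ($\tfrac12 d(x,\partial E)\le d(y,\partial E)\le\tfrac32 d(x,\partial E)$), and apply Besicovitch once globally. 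This yields the same type of constant $\sigma_2=\xi\,C_1$ with a slightly cleaner summation, and your case split at $d(x,\partial E)>2\e_0$, using $d(x,\partial E)\le 2R$ for sets in $S_{r,R}$, plays exactly the role of the paper's truncation $\delta_0=\min\{\e_0/R,1\}$ together with its observation \eqref{eqn:signed110} that far shells do not meet $\partial F$. One trivial point to add for completeness: at points $x\in\partial F$ with $d(x,\partial E)=0$ the ball degenerates, so before invoking Lemma~\ref{lem:bes} (which requires non-degenerate closed balls) you should restrict to $\{x\in\partial F: d(x,\partial E)>0\}$, which is harmless since the excluded set contributes nothing to $\int_{\partial F} d^2(x,\partial E)\,d\sigma$.
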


\begin{proof}
{ 1.}
For all $i \in \integer$, define 
\begin{align}
  \D_{i}:= \{ x \in \R^n : 2^{i} < d(x,\partial E) \leq 2^{i+1} \} \hbox{ and } \delta_0:= \min\left\{ \frac{\e_0}{R}, 1 \right\}
\end{align}
where $\e_0$ is given in Lemma~\ref{lem:den}. Let us show that there exists $c_1=c_1(r,R)$ such that for all $x_{i} \in \D_{i} \cap \partial F$
\begin{align}
\label{eqn:signed11}
\I_1 \leq c_1 \I_2, \quad \I_1:=\int_{\partial F \cap B_{2^{i-1} \delta_0}(x_i)} d^2(x,\partial E) d\sigma \hbox{ and } \I_2:=\int_{(E \triangle F) \cap B_{2^{i-1} \delta_0}(x_i)} d(x,\partial E) dx.
\end{align}

\medskip

As $E,F \subset B_R$, it holds that for $2^{i} > 2R$,
\begin{align}
\label{eqn:signed110}
\D_{i} \cap \partial F = \emptyset 
\end{align}
Thus, it is enough to consider $i \leq \log_2 R+1$. Then, it holds that
\begin{align}
\label{eqn:signed111}
2^{i-1} \delta_0 \leq R\delta_0 \leq \e_0
\end{align}

\medskip

For any $x \in B_{2^{i-1} \delta_0}(x_i)$ and $x_{i} \in \D_{i}$, it hold that
\begin{align}
\label{eqn:signed12}
2^{i-1}  \leq d(x,\partial E) \leq 2^{i+2}.
\end{align}
Therefore, $\I_1$ and $\I_2$ are bounded as follows;
\begin{align}
\I_1 \leq \per( F ; B_{2^{i-1} \delta_0}(x_i)) 2^{2i+4} \hbox{ and } \I_2 \geq | (E \triangle F) \cap B_{2^{i-1} \delta_0}(x_i) | 2^{i-1} \delta_0.
\end{align}

\medskip

By \eqref{eqn:signed111} and \eqref{eqn:2den} in Lemma~\ref{lem:den}, it holds that
\begin{align}
\label{eqn:signed13}
\I_1 \leq \eta_2 2^{(i-1)(n-1)} 2^{2i+4} = \eta_2 2^{i(n+1) - n+5 } \delta_0^{n-1}
\end{align}
On the other hand, as $B_{2^{i-1} \delta_0}(x_i) \subset E$ or $B_{2^{i-1} \delta_0}(x_i) \subset E^c$, it holds that
\begin{align}
| (E \triangle F) \cap B_{2^{i-1} \delta_0}(x_i) | =
\begin{cases}
| B_{2^{i-1} \delta_0}(x_i) \setminus F| &\hbox { if } B_{2^{i-1} \delta_0}(x_i) \subset E,\\
| B_{2^{i-1} \delta_0}(x_i) \cap F| &\hbox { if } B_{2^{i-1} \delta_0}(x_i) \subset E^c
\end{cases}
\end{align}
From \eqref{eqn:signed111} and \eqref{eqn:1den} in Lemma~\ref{lem:den}, in both cases, we have
\begin{align}
\label{eqn:signed14}
\I_2 \geq \frac{2^{(i-1)(n+1)}}{\eta_1} = \frac{2^{i(n+1) - n-1}}{\eta_1} \delta_0^{n}.
\end{align}
From \eqref{eqn:signed13} and \eqref{eqn:signed14}, \eqref{eqn:signed11} holds for $c_1:=\frac{2^6 \eta_1 \eta_2 }{\delta_0}$.

\medskip

{ 2.} 
Let $\F:= \{ B_{2^{i-1} \delta_0}(x_i) : x_i \in D_i \}$. Then, by Lemma~\ref{lem:bes}, there exists $\F_{1}, \dots, \F_{\xi}$ subfamilies of disjoint balls in $\F$ such that each family $\F_j$ is at most countable and
\begin{align}
\label{eqn:signed21}
\partial F \cap D_i  \subset \bigcup_{j=1}^{\xi} \bigcup_{B \in \F_j} B.
\end{align}
From \eqref{eqn:signed21} and \eqref{eqn:signed11} in Step 1, it holds that
\begin{align}
\label{eqn:signed22}
\cI_3:=\int_{\partial F \cap D_i} d^2(x,\partial E) d\sigma \leq \sum_{j=1}^{\xi} \sum_{B \in \F_j} \int_{\partial F \cap B} d^2(x,\partial E) d\sigma
\leq  c_1 \sum_{j=1}^{\xi} \sum_{B \in \F_j}  \int_{(E \triangle F) \cap B} d(x,\partial E) dx
\end{align}
As \eqref{eqn:signed12} implies $B \subset  \D_{i-1}\cup \D_{i} \cup \D_{i+1}$ for all $B \in \F$ and $\F_j$ is a family of disjoint balls, we conclude that
\begin{align}
\label{eqn:signed23}
\cI_3 \leq c_1 \sum_{j=1}^{\xi}   \int_{(E \triangle F) \cap (\D_{i-1}\cup \D_{i} \cup \D_{i+1} )} d(x,\partial E) dx  = c_1 \xi   \int_{(E \triangle F) \cap (\D_{i-1}\cup \D_{i} \cup \D_{i+1} )} d(x,\partial E) dx 
\end{align}

\medskip

{ 3.}
From $\bigcup_{i \in \integer} \D_i = \R^n$, \eqref{eqn:signed110} and \eqref{eqn:signed23}, it holds that 
\begin{align*}
\int_{\partial F} d^2(x,\partial E) d\sigma &= \sum_{i \in \integer} \int_{\partial F \cap \D_i} d^2(x,\partial E) d\sigma \leq c_1 \xi \sum_{i \in \integer}    \int_{(E \triangle F) \cap (\D_{i-1}\cup \D_{i} \cup \D_{i+1} )} d(x,\partial E) dx = 3c_1 \xi  \td^2(F, E).
\end{align*}
Thus, \eqref{lem:1signed} holds for $\sigma_2:=3c_1 \xi$.
\end{proof}

\medskip

Now, let us find the Euler-Lagrange equation as \cite[Lemma 3.4.2]{MugSeiSpa16} and \cite[Theorem 17.20]{Mag12}. Consider the discrete flow $E_t=E_t(h,\delta)$ given in \eqref{eqn:mmov} and define the Lagrange multiplier at each time step.
\begin{align}
\label{def:ldth}
\ldth:=\gd(|E_t(h,\delta)|)
\end{align}

\begin{DEF}
\label{def:loc}
\cite[Chapter 17.3]{Mag12} We say that $\{ f_s \}_{-\e_1 < s<\e_2}$ is \textit{a local variation in $A$} for an open set $A$ if for a fixed $-\e_1 < s<\e_2$ and $\e_1,\e_2>0$, $f_s: \R^n \rightarrow \R^n$ is a diffeomorphism of $\R^n$ such that
\begin{align}
f_0(x) = x &\hbox{ for all } x \in \R^n,\\
\{ x \in \R^n : f_s(x) \neq s \} \subset \subset A &\hbox{ for all } -\e_1 < s<\e_2.
\end{align}
Let us denote \textit{the initial velocity of $\{ f_s \}_{-\e_1 < s<\e_2}$} by
\begin{align}
\label{eqn:vel}
\Psi(x):= \frac{\partial f_s }{\partial s}\Big|_{s=0} (x)
\end{align}
 \end{DEF}

Recall the first variation of perimeter and volume from Theorem 17.5 and Proposition 17.8 in \cite{Mag12}. For $E \in S_{r,R}$, it holds that
\begin{align}
\label{eqn:var1}
\per(f_s(E)) &= \per(E) + s \int_{\partial E} \dv_\pe \Psi d\H^{n-1} + O(s^2), \\
\label{eqn:var2}
|f_s(E)| &= |E| + s \int_{\partial E}  \Psi \cdot \n d\H^{n-1} + O(s^2).
\end{align}
where $\dv_\pe$ is the boundary divergence on $\pe$ defined by
\begin{align}
\label{eqn:var3}
\dv_\pe \Psi(x) := \dv \Psi(x) - (\n \cdot \grad \Psi \n) (x)
\end{align}
for $x \in \pe$.
On the other hand, the first variation of $\td$ is as follows,
\begin{align}
\td^2(f_s(E),F) = \td^2(E,F) + s \int_{\partial E}  \sd(x,\partial F) \Psi \cdot \n d\H^{n-1} + O(s^2).
\end{align}
from (3.1) in \cite{MugSeiSpa16}.

\medskip

In our case, the constraints $S_{r_0,R_0}$ gives some difficulties when we choose the local variation. The following two lemmas construct the local variations within the constraint. The first lemma discusses creating a larger perturbed set by dilation. 
For $a>0$, let us denote $a E := \{x: a^{-1}x \in E\}$.

\begin{lem}
\label{lem:bdd}
Let $E_t(h_i,\delta)$ be given in \eqref{eqn:mmov},  $\delta_0$ in \eqref{eqn:d0}, and $r_0,R_0$ in \eqref{eqn:r0}. Then for $0<\delta<\delta_0$ There exists $i^*=i^*(\delta)$ and a constant $s_1>0$  such that for all $i \geq i^*$ and $s \in [0,s_1)$ we have
\begin{align}
\label{eqn:bound21}
(1+s)E_t(h_i,\delta) \in S_{r_0,R_0} \hbox{ for } t\in [0,T].
\end{align}
\end{lem}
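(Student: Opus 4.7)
The plan is to exploit two facts already established: by Proposition~\ref{thm:geo} the limit flow satisfies $\odt \in S_{r_1, R_1}$ with $r_0 < r_1$ and $R_1 < R_0$ (from the choice in \eqref{eqn:r0}), and by Proposition~\ref{prop:nom} the discrete flow $E_t(h_i,\delta)$ converges to $\odt$ in Hausdorff distance uniformly on $[0,T]$. This gives genuine ``room'' inside the admissible class $S_{r_0,R_0}$ to absorb a small radial dilation centered at the origin.

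First I would use the uniform Hausdorff convergence to extract $i^* = i^*(\delta)$ with the property that, for every $i \geq i^*$ and every $t \in [0,T]$,
\[
 E_t(h_i,\delta) \subset B_{R'}(0), \qquad R' := \tfrac{R_0 + R_1}{2} \in (R_1, R_0).
\]
This is possible because $d_H(E_t(h_i,\delta), \odt) \to 0$ uniformly in $t$, and $\odt \subset B_{R_1}$. With this $R'$ in hand, I would set
\[
 s_1 := \frac{R_0}{R'} - 1 = \frac{R_0 - R_1}{R_0 + R_1} > 0,
\]
a constant depending only on $R_0, R_1$ (hence only on $\oz$ and the choices in \eqref{eqn:r0}).

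Next I would verify the two defining conditions of $S_{r_0,R_0}$ for the dilated set $(1+s)E_t(h_i,\delta)$ when $s \in [0,s_1)$. For the outer containment, a direct dilation gives
\[
 (1+s)E_t(h_i,\delta) \subset B_{(1+s)R'}(0) \subset B_{R_0}(0).
\]
For star-shapedness, I would use that $E_t(h_i,\delta) \in S_{r_0, R_0}$ by construction of the constrained minimizing movement in Definition~\ref{def:mmov}, so in particular $E_t$ is star-shaped with respect to every $y \in B_{r_0}(0)$. Since dilation from the origin commutes with line-segment connectedness (if $E$ is star-shaped with respect to $y$, then $(1+s)E$ is star-shaped with respect to $(1+s)y$), it follows that $(1+s)E_t$ is star-shaped with respect to every point of $(1+s)B_{r_0}(0) = B_{(1+s)r_0}(0) \supset B_{r_0}(0)$. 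In particular, since $0 \in B_{r_0} \subset E_t$ and $E_t$ is star-shaped with respect to $0$, one also has the trivial inclusion $B_{r_0}(0) \subset E_t \subset (1+s)E_t$, so the center-ball condition of $S_{r_0}$ is preserved.

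I do not expect a substantive obstacle here: the only thing to be careful about is that the Hausdorff convergence be strong enough to give a uniform circumradius bound strictly below $R_0$, but this is immediate from the inequality $\sup_{x \in E_t} |x| \leq \sup_{y \in \odt}|y| + d_H(E_t, \odt)$ together with Proposition~\ref{prop:nom}. The lemma is essentially the geometric observation that a set sitting strictly inside $B_{R_0}$ and star-shaped with respect to $B_{r_0}$ has positive ``dilation slack'' within the admissible class $S_{r_0,R_0}$, with the slack controlled by the gap $R_0 - R_1$.
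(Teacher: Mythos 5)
Your proposal is correct and follows essentially the same route as the paper: uniform Hausdorff convergence from Proposition~\ref{prop:nom} gives $E_t(h_i,\delta)\subset B_{(R_0+R_1)/2}$ for $i\geq i^*$, the same choice $s_1=\frac{R_0}{(R_0+R_1)/2}-1$ handles the outer containment, and dilation invariance of star-shapedness handles the $S_{r_0}$ condition. The only (cosmetic) difference is that you verify star-shapedness of $(1+s)E_t$ directly from the segment definition, whereas the paper invokes the interior-cone characterization of $S_r$ (Lemma~\ref{lem:star}) and the identity $(1+s)IC(r,x)=IC((1+s)r,(1+s)x)$; both arguments are valid.
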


\begin{proof}
From Proposition~\ref{thm:geo},  $B_{r_1}(0) \subset \odt \subset B_{R_1}(0)$ for all $t>0$. Let us first show that there exists $i^*=i^*(\delta)$ such that for all $i \geq i^*$, $t \in [0,T]$ we have
\begin{align}
\label{eqn:bdd}
B_{r_2}(0) \subset E_t(h_i,\delta)  \subset B_{R_2}(0), \quad \hbox{ where } R_2:=\frac{R_0 + R_1}{2} \hbox{ and } r_2:=\frac{r_0 + r_1}{2}
\end{align}
By the uniform convergence of $E_t(h_i)$ in $[0,T]$ from Proposition~\ref{prop:nom}, there exists $i^*=i^*(\delta)$ such that
\begin{align}
d_H(E_t(h_i),\odt) \leq \min \left\{\frac{R_0 - R_1}{4} , \frac{r_0 - r_1}{4} \right\}
\end{align}
for all $i \geq i^*$ and $t \in [0,T]$. This implies \eqref{eqn:bdd}. From \eqref{eqn:bdd}, we conclude that for all $s \in [0,s_1)$
\begin{align}
(1+s)E_t(h_i,\delta)\in B_{R_0} \hbox{ where } s_1:=\frac{R_0}{R_2}-1
\end{align}

\medskip

 As $E_t \in S_{r_0}$, \eqref{eqn:ic} imply that for all $ x \in \pet$
\begin{align}
IC(r,x) \subset E_t.
\end{align}
Since $(1+s)IC(r,x) = IC((1+s)r,(1+s)x))$, we conclude that for all $x \in \partial (1+s)(E_t)$
\begin{align}
IC((1+s)r, x) \subset (1+s)(E_t)
\end{align}
As $IC(r,x) \subset IC((1+s)r, x)$, \eqref{eqn:bound21} holds for $s \in [0,s_1)$.
\end{proof}

Generating a smaller set that stays in $S_{r_0,R_0}$ turns out to be more delicate. For this we need perturbations that preserve $\partial B_{r_0}(0)$ and shrinks outside of $B_{r_0}(0)$.  To stay within $S_{r_0,R_0}$ we must ensure that the angles of interior cone and exterior cone given in \eqref{eqn:ic} and \eqref{eqn:ec} do not decrease for the perturbed set. This is what we prove with a specific choice of the perturbation $\G_s$ below.

\begin{lem}
\label{lem:dil}
Let $E_t(h_0,\delta)$, $\delta_0, r_0,R_0$ and $i^*$ be as in the previous lemma. Let us define 
$$\psi:=\chi_{E_t(h_i,\delta)} - \chi_{E_t(h_i,\delta)^C} \hbox{ and } \Gps(x) := \psi((1+s(|x|^2 - r_0^2))x).
$$
%\textcolor{blue}{i think that, since you are using functions, composition is a better way to write this. just define $g(x) = 1+s(|x|^2-r^2)x$ and use composition.}
Then for $\delta \in (0,\delta_0)$ there exists $s_2>0$ such that 
\begin{align}
\{\Gps>0\} \in S_{r_0,R_0}\hbox{ for } s\in [0, s_2),  i \geq i^ * \hbox{ and } t\in [0,T].
\end{align}

\end{lem}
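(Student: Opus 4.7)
The plan is to exploit the radial structure of the perturbation. Writing $\Phi_s(x) := (1+s(|x|^2-r_0^2))\,x$, the scalar profile $g_s(r) := r(1+s(r^2-r_0^2))$ satisfies $g_s(r_0) = r_0$ and $g_s'(r) = 1 + s(3r^2-r_0^2)$. Thus for $s$ below some threshold $s_0 = s_0(r_0, R_0)$, $g_s$ is strictly increasing on $[0, R_0]$, so $\Phi_s$ restricts to a radial diffeomorphism of $B_{R_0}(0)$ fixing $\partial B_{r_0}(0)$. It follows that $\{\Gps > 0\} = \Phi_s^{-1}(E_t(h_i,\delta))$, and if $\rho_t \in \mathrm{Lip}(S^{n-1})$ denotes the radial function of $E_t := E_t(h_i,\delta)$, the radial function of $F_s := \{\Gps > 0\}$ is the composition $\tilde\rho_s = g_s^{-1}\circ\rho_t$.

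First I would verify the enclosing containment. Lemma~\ref{lem:bdd} yields $r_2 \leq \rho_t \leq R_2$ uniformly in $t \in [0,T]$ and $i \geq i^*$, with $r_0 < r_2 < R_2 < R_0$. Since $g_s(r_0) = r_0$ and $g_s(r) \geq r$ for $r \geq r_0$, one obtains $r_0 \leq \tilde\rho_s \leq \rho_t \leq R_2 < R_0$, hence $B_{r_0}(0) \subset F_s \subset B_{R_0}(0)$.

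Next I would verify star-shapedness of $F_s$ with respect to $B_{r_0}(0)$ via the a.e.\ characterization: $E \in S_{r_0}$ iff its radial function $\rho$ is Lipschitz with $|\grad_{S^{n-1}}\rho|^2 \leq \rho^2(\rho^2 - r_0^2)/r_0^2$ a.e., equivalent to $\nu_E(x) \cdot x \geq r_0$ at regular boundary points. Since $E_t \in S_{r_0,R_0}$ by construction (Definition~\ref{def:mmov}), this bound holds for $\rho_t$. The chain rule gives $\grad_{S^{n-1}}\tilde\rho_s = g_s'(\tilde\rho_s)^{-1}\grad_{S^{n-1}}\rho_t$ a.e., so the required bound reduces to the scalar inequality
\begin{equation*}
\rho_t^2(\rho_t^2 - r_0^2) \;\leq\; g_s'(\tilde\rho_s)^2\,\tilde\rho_s^2(\tilde\rho_s^2 - r_0^2).
\end{equation*}
Both sides agree at $s=0$; differentiating in $s$ (with $\rho_t$ fixed, and using the implicit relation $\partial_s\tilde\rho_s|_{s=0} = -\rho_t(\rho_t^2 - r_0^2)$) gives right-hand derivative $2\rho_t^4(\rho_t^2 - r_0^2)$ while the left-hand side is constant in $s$. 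Since $\rho_t \geq r_2 > r_0$ uniformly, this derivative is bounded below by $2 r_2^4(r_2^2 - r_0^2) > 0$, and a quantitative Taylor remainder estimate over the compact range of $\tilde\rho_s$ yields a uniform $s_2 = s_2(r_0, r_2, R_0, R_2) > 0$ independent of $t$ and $i$.

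The main obstacle is this last verification: the inequality degenerates in the limit $\rho_t \downarrow r_0$, so without the strict slack $\rho_t \geq r_2 > r_0$ supplied by Lemma~\ref{lem:bdd}, no uniform $s_2 > 0$ could exist; this is exactly why the previous lemma was set up to produce a strictly larger inner ball $B_{r_2}$ with $r_2 > r_0$.
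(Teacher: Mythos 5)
Your construction is sound, but it follows a genuinely different route from the paper. The paper never passes to radial functions: it takes a $\C^1$ level-set function $\phi$ for $E_t$, computes $D\Gps$ by the chain rule, and verifies the normal condition of Lemma~\ref{lem:starloc}, $-\frac{D\Gps}{|D\Gps|}(x)\cdot x\geq r_0$ on $\{\Gps=0\}$, by an explicit algebraic factorization in $P_s(x)=1+s(|x|^2-r_0^2)$; this yields the concrete threshold $s_2=\frac{1}{2(R_0^2-r_0^2)}$ using only $E_t\in S_{r_0,R_0}$. Your version instead writes $\{\Gps>0\}$ as $\Phi_s^{-1}(E_t)$ with radial function $\tilde\rho_s=g_s^{-1}\circ\rho_t$ and reduces matters to the scalar inequality $\rho_t^2(\rho_t^2-r_0^2)\leq g_s'(\tilde\rho_s)^2\,\tilde\rho_s^2(\tilde\rho_s^2-r_0^2)$, which you settle by a first-order Taylor argument in $s$; your computation of the $s$-derivative at $s=0$ is correct, and given the bounds $r_2\leq\rho_t\leq R_2$ from Lemma~\ref{lem:bdd} the Taylor/remainder step does produce a uniform $s_2$. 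Two caveats. First, the a.e.\ characterization of $S_{r_0}$ through $|\grad_{S^{n-1}}\rho|^2\leq\rho^2(\rho^2-r_0^2)/r_0^2$ for merely Lipschitz radial functions is asserted, not proved; the paper only records the smooth-point version (Lemma~\ref{lem:starloc}) and the cone version (Lemma~\ref{lem:star}), so you should either justify the a.e.\ statement from the cone condition or, as the paper does, reduce to $\C^1$ boundaries. Second, your closing claim is false: the inequality does \emph{not} genuinely degenerate as $\rho_t\downarrow r_0$, and the strict slack $r_2>r_0$ is not needed for this lemma. Setting $\epsilon=\tilde\rho_s^2-r_0^2$, the two sides expand as $\epsilon(1+2sr_0^2)+O(\epsilon^2)$ and $\epsilon(1+2sr_0^2)^2+O(\epsilon^2)$ respectively, so the inequality holds with a definite relative margin near the inner ball; consistently, the paper's factorization gives $S_{r_0,R_0}$-membership for all $s<\frac{1}{2(R_0^2-r_0^2)}$ with no reference to $r_2$. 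The bounds of Lemma~\ref{lem:bdd} are really needed later, in Proposition~\ref{lem:el}, where the strict inequalities $\Psi\cdot\n\leq -r_0(r_2^2-r_0^2)<0$ and the analogous bound on $\dv_{\pet}\Psi$ require the boundary to stay away from $\partial B_{r_0}$. Your use of the slack is legitimate (it is available for $i\geq i^*$) and makes the uniformity argument cheap, but it buys a weaker, non-explicit $s_2$ and should not be presented as necessary.
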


\begin{proof} We may assume that $E_t$ has a $\C^1$ boundary. Then, there is a $\C^1$ function $\phi : \R^n \rightarrow \R$ such that 
\begin{align}
\label{eqn:dil01}
\{ \phi > 0 \} = E_t, \quad  \{ \phi = 0 \} =  \partial E_t, \quad \{ \phi < 0 \} =  \overline{E}_t^C, \hbox{ and } D\phi \neq 0 \hbox{ on } \pet. 
\end{align}

\medskip

First note that  as $E_t \in S_{r_0,R_0}$
we have
$B_{r_0} \subset \{\Gp>0\} \subset B_{R_0}$.
To show  that $\{\Gp>0\}$ is in $S_r$, from Lemma~\ref{lem:starloc} it is enough to show that 
\begin{align}
\label{eqn:dil10}
D\Gp(x) \neq 0 \hbox{ and }  - \frac{D\Gp}{|D\Gp|}(x) \cdot x \geq r_0  \hbox{ for all } x\in \{\Gp=0\}.
\end{align}

For the rest of the proof we assume that $x\in \{\Gp=0\}$. 

\medskip

Denote $P_s(x):=1+s(|x|^2 - r_0^2)$ so that we can write $\Gp(x) = \phi(P_s(x)x)$, and thus $P_s(x)x\in \{\phi=0\}$ with  $D\phi(P_s(x)x)\neq 0$. Observe that
\begin{align}
\label{eqn:s2}
1 \leq P_s(x) \leq \frac{3}{2}  \hbox{ for } 0\leq s < s_2 := \frac{1}{2(R_0^2 - r_0^2)}.
\end{align}
Since \begin{align}
\label{eqn:dil221}
- D\Gp(x) \cdot x = - (|x|^2s + P_s(x)) D\phi(P_s(x)x) \cdot x,
\end{align}
we have 
\begin{align}
\label{eqn:dil12}
|D\Gp(x)|^2 &= P_s(x)^2 |D\phi(P_s(x) x)|^2 + 4s (|x|^2s + P_s(x)) (D\phi(P_s(x)x) \cdot x)^2.
\end{align}
 \eqref{eqn:s2} yields
\begin{align}
|D\Gp(x)|^2 \geq |D\phi(P_s(x) x)|^2 >0 \hbox{ for all } s \in [0,s_2),
\end{align}
and thus the first condition of \eqref{eqn:dil10} is satisfied.

\medskip

 Let us now show the second condition of \eqref{eqn:dil10}.
As $\{ \phi >0 \} \in S_{r_0,R_0}$ and $P_s(x)x \in \{ \phi = 0 \}$, Lemma~\ref{lem:starloc} implies
\begin{align}
\label{eqn:dil22}
-   \frac{D\phi}{|D\phi|}(P_s(x) x) \cdot (P_s(x)x) \geq r_0.
\end{align}
From \eqref{eqn:dil221}

\eqref{eqn:s2} and \eqref{eqn:dil22} imply that $- D\Gp(x) \cdot x$ is positive.
Thus, it is enough to show that
\begin{align}
\I_1 := (- D\Gp(x) \cdot x )^2 - r_0^2 |D\Gp(x)|^2 \geq 0 \hbox{ for all } s \in [0,s_2) \hbox{ and } x \in \{\Gp=0\}. 
\end{align}

\medskip

From \eqref{eqn:dil22} and \eqref{eqn:dil12},  it holds that

\begin{align}
r_0^2 |D\Gp(x)|^2 \leq \left(P_s(x)^4  + 4 r_0^2s( |x|^2s + P_s(x)) \right) (D\phi(P_s(x)x) \cdot x)^2. 
\label{eqn:dil23}
\end{align}
From  \eqref{eqn:dil221} and \eqref{eqn:dil23} it follows that
\begin{align}
\frac{\I_1}{(D\phi(P_s(x)x) \cdot x)^2} &\geq \left (P_s(x)^2 +  4 |x|^2s( |x|^2 s + P_s(x)) \right) - \left(P_s(x)^4 + 4 r_0^2s( |x|^2 s + P_s(x))\right).
\label{eqn:dil24}
\end{align}
Using $s(|x|^2 - r_0^2) = P_s(x)-1$ and factorizing the above, we conclude 
\begin{align}
\frac{\I_1}{(D\phi(P_s(x)x) \cdot x)^2} \geq (P_s(x)-1) ( - P_s(x)^3 - P_s(x)^2 + 4P_s(x)  + 4|x|^2 s ).
\end{align}
From \eqref{eqn:s2}, we conclude that $\I_1 \geq 0$ for all $s \in [0,s_2)$ and $x \in \{\Gp=0\}$.
\end{proof}

From Lemma~\ref{lem:bdd} and \ref{lem:dil}, we get the following estimates.

\begin{prop}
\label{lem:el}
There exists $\sigma_3 = \sigma_3(r_0,R_0)$ and $\sigma_4 = \sigma_4(r_0,R_0)$ such that for all $t \in \Rpt$ and $i \geq i^*$,
\begin{align}
\label{eqn:el}
| \lambda_\delta^{h_i}(t) |^2 &\leq \sigma_3 +   \frac{\sigma_4}{h^2}  \int_{\pet(h_i,\delta)} d^2(x, \partial E_{t-h_i})  d\sigma  
\end{align}
Here, $i^*$ is given in Lemma~\ref{lem:bdd} and $E_t(h_i,\delta)$ is given in Proposition~\ref{prop:nom}. Also, $r_0$ and $R_0$ are given in \eqref{eqn:r0}.
\end{prop}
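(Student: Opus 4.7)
The plan is to derive \eqref{eqn:el} from the Euler--Lagrange inequalities for the variational problem \eqref{eqn:Jd} at the minimizer $E_t=E_t(h_i,\delta)$, by testing against the two one-sided admissible families of perturbations constructed in Lemma~\ref{lem:bdd} and Lemma~\ref{lem:dil}. These yield respectively an upper and a lower bound on $\lambda_\delta^{h_i}(t)=\gd(|E_t|)$; squaring and combining them produces \eqref{eqn:el}.

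The common template is this: for any one-sided local variation $\{f_s\}_{s\in[0,s_0)}$ with $f_s(E_t)\in S_{r_0,R_0}$ and initial velocity $\Psi$, minimality together with \eqref{eqn:var1}--\eqref{eqn:var3} and the distance variation formula (3.1) of \cite{MugSeiSpa16} give
\[
\int_{\partial E_t}\dv_{\partial E_t}\Psi\,d\sigma
-\lambda_\delta^{h_i}(t)\int_{\partial E_t}\Psi\cdot\n\,d\sigma
+\frac{1}{h_i}\int_{\partial E_t}\sd(x,\partial E_{t-h_i})\,\Psi\cdot\n\,d\sigma\;\ge\;0.
\]
First I would take $\Psi_1(x)=x$ (the dilation of Lemma~\ref{lem:bdd}). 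Using $\dv_{\partial E_t}\Psi_1\equiv n-1$, the identity $\int_{\partial E_t}x\cdot\n\,d\sigma=n|E_t|$, and $|E_t|\ge|B_{r_0}|$, this gives an upper bound $\lambda_\delta^{h_i}(t)\le C\bigl(P(E_t)+h_i^{-1}\int_{\partial E_t}d(x,\partial E_{t-h_i})\,d\sigma\bigr)$. Next I would take $\Psi_2(x)=-(|x|^2-r_0^2)x$ from Lemma~\ref{lem:dil}. Since $|x|\ge r_0$ on $\partial E_t$ and $x\cdot\n\ge r_0>0$ by star-shapedness, $\Psi_2\cdot\n\le 0$ on $\partial E_t$, so the $\lambda$ term flips sign and the inequality rearranges to
\[
-\lambda_\delta^{h_i}(t)\,A(E_t)\;\le\; C\Bigl(P(E_t)+\tfrac{1}{h_i}\!\int_{\partial E_t}\!d(x,\partial E_{t-h_i})\,d\sigma\Bigr),\quad A(E_t):=\int_{\partial E_t}(|x|^2-r_0^2)(x\cdot\n)\,d\sigma.
\]

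To turn this into a lower bound on $\lambda_\delta^{h_i}(t)$ I would show $A(E_t)$ is bounded below uniformly. By the divergence theorem
$A(E_t)=(n+2)\int_{E_t}|x|^2\,dx-nr_0^2|E_t|\ge 2r_0^2(|E_t|-|B_{r_0}|)$,
and $|E_t|$ stays uniformly near $1$ because $\tfrac{1}{2\delta}(1-|E_t|)^2\le\cJ_\delta(E_t)\le\cJ_\delta(\Omega_0)=P(\Omega_0)$ and $\delta<\delta_0$, while $|B_{r_0}|<1$ is small by the choice of $r_0$. Combining the two one-sided bounds, using that $P(E_t)\le P(\Omega_0)$ (energy monotonicity) and $|\dv_{\partial E_t}\Psi_j|,|\Psi_j\cdot\n|\lesssim 1$ on $\partial E_t\subset B_{R_0}$, yields $|\lambda_\delta^{h_i}(t)|\le C+\tfrac{C}{h_i}\int_{\partial E_t}d(x,\partial E_{t-h_i})\,d\sigma$ with $C=C(r_0,R_0)$. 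A final Cauchy--Schwarz on $\partial E_t$ converts the $L^1$ distance integral into $P(E_t)^{1/2}$ times its $L^2$ counterpart, and squaring produces \eqref{eqn:el}.

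The main obstacle is \emph{admissibility}: the first-variation inequalities above are valid only if the perturbed sets lie in $S_{r_0,R_0}$ on a uniform interval $s\in[0,s_j)$ independent of $\delta$ and $i\ge i^\ast$. This is exactly the content of Lemma~\ref{lem:bdd} (for the outward dilation) and Lemma~\ref{lem:dil} (for the inward perturbation that pins $\partial B_{r_0}$), so the technical work already carried out there is what makes the outline legitimate. A secondary point requiring care is the lower bound $A(E_t)\gtrsim 1$, which is unlocked precisely by the divergence-theorem identity above combined with the quantitative control $|E_t|\approx 1$ coming from the energy-decrease inequality.
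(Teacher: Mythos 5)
Your proposal follows essentially the paper's own route: the one-sided Euler--Lagrange inequality at the minimizer of \eqref{eqn:Jd}, tested first with the dilation $\Psi(x)=x$ of Lemma~\ref{lem:bdd} (upper bound) and then with $\Psi(x)=-(|x|^2-r_0^2)x$ of Lemma~\ref{lem:dil} (lower bound), finished by Cauchy--Schwarz. The only substantive difference is how you bound the denominators $\int_{\partial E_t}\Psi\cdot\n\,d\sigma$: you integrate by parts and work with volumes ($\int_{\partial E_t}x\cdot\n\,d\sigma=n|E_t|$ and $A(E_t)\ge 2r_0^2(|E_t|-|B_{r_0}|)$, which is a correct computation), whereas the paper uses the pointwise bounds $r_0\le x\cdot\n\le R_0$ and $|x|\ge r_2$ on $\partial E_t$ and normalizes by $\per(E_t)$, which it bounds below by isoperimetry since $B_{r_0}\subset E_t$.

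There is one step in your version that does not go through as written: the claim that $|E_t|$ is uniformly near $1$ for \emph{all} $\delta\in(0,\delta_0)$ because $\tfrac{1}{2\delta}(1-|E_t|)^2\le\cJ_\delta(\Omega_0)=\per(\Omega_0)$. That only gives $|1-|E_t||\le\sqrt{2\delta\,\per(\Omega_0)}$, and $\delta_0$ in \eqref{eqn:d0} is \emph{not} chosen small relative to $(1-|B_{r_0}|)^2/\per(\Omega_0)$; nor is $|B_{r_0}|$ ``small'' --- it is only strictly below $1$. So your lower bound $A(E_t)\gtrsim 1$ is justified only for $\delta$ below an additional, unquantified threshold, which weakens the statement (and is exactly the regime where the estimate is needed uniformly). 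The repair is already in your hands: for $i\ge i^*$, \eqref{eqn:bdd} in Lemma~\ref{lem:bdd} gives $B_{r_2}(0)\subset E_t(h_i,\delta)$ with $r_2=(r_0+r_1)/2>r_0$, hence $|E_t|-|B_{r_0}|\ge w_n(r_2^n-r_0^n)>0$ independently of $\delta$ and $t$; this is precisely the information the paper exploits (through the pointwise bound $|x|\ge r_2$ in \eqref{eqn:bound32}). A second, smaller point: your normalization by $n|E_t|$ (rather than by $\per(E_t)$ as in the paper) leaves terms like $\per(E_t)$ and, after squaring, $\per(E_t)\int_{\partial E_t}d^2\,d\sigma$ in the estimate, so you need a uniform upper bound $\per(E_t)\le C(r_0,R_0)$; this is true because $E_t\in S_{r_0,R_0}$ is a Lipschitz radial graph with constants depending only on $r_0,R_0$ (cf.\ the argument in the proof of Lemma~\ref{lem:den}), but it should be stated rather than routed through $\per(\Omega_0)$, whose appearance would otherwise make $\sigma_3,\sigma_4$ depend on the initial set beyond $r_0,R_0$. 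With these two fixes your outline matches the paper's proof in substance.
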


\begin{proof}

For simplicity, let $h = h_i$ for $i \geq i^*$.

\medskip

{ 1.} First, show that if $f_s(E_t) \in S_{r_0,R_0}$ for all $s\in [0,s_0)$, then it holds that
\begin{align}
\label{eqn:bound11}
\ldth \int_{\pet} \n \cdot \Psi d\sigma \leq  \int_{\pet} \dv_{\pet} \Psi +  \frac{1}{h} \sd(x, \partial E_{t-h}) \n \cdot \Psi d\sigma.
\end{align}
As $E_t$ is a minimizer of $\cJ_\delta(\cdot) + \frac{1}{h}\tilde{d}^2(\cdot,E_{t-h})$ on $S_{r_0,R_0}$, \eqref{eqn:var1}, \eqref{eqn:var2}, and \eqref{eqn:var3} imply that
\begin{align}
s \ldth \int_{\pet} \n \cdot \Psi d\sigma \leq s \int_{\pet} \dv_{\pet} \Psi d\sigma + \frac{s}{h} \int_{\pet}  \sd(x, \partial E_{t-h}) \n \cdot \Psi d\sigma + O(s^2)
\end{align}
for all $s \in [0,s_0)$. Dividing both sides by $s>0$ and sending $s$ to zero, we conclude \eqref{eqn:bound11}. 

\medskip

{ 2.} Let us find the upper bound of $\ldth$. Recall $f_s(x):=x+sx$ in Lemma~\ref{lem:bdd}. Then, $f_s(E_t) \in S_{r_0,R_0}$ for $s \in [0,s_1)$ and $\Psi(x)=x$.  From \eqref{eqn:bound11} in Step 1 and $r_0 \leq \n \cdot x \leq R_0$ on $\pet$, it holds that
\begin{align}
\label{eqn:bound22}
\ldth &\leq \frac{ \int_{\pet} \dv_{\pet} \Psi +  \frac{1}{h} \sd(x, \partial E_{t-h}) \n \cdot \Psi d\sigma}{\int_{\pet} \n \cdot \Psi d\sigma} \leq \frac{n-1}{r_0} + \frac{R_0}{r_0 \per(E_t)} \frac{1}{h} \int_{\pet} \sd(x, \partial E_{t-h})  d\sigma
\end{align}

\medskip

{ 3.} Let us construct the lower bound. Define $g, f : \R^n \times [0,s_2) \rightarrow \R^n$ by 
\begin{align}
g_s(x) = g(x,s) := (1+s(|x|^2 - r^2))x \hbox{ and } f_s(x) = f(x,s) := (g_s)^{-1}(x)
\end{align}
where $s_2$ is given in \eqref{eqn:s2} in Lemma~\ref{lem:dil}. As $g(f(x,s),s)=x$ and $Dg_s|_{s=0} = I$, it holds that
\begin{align}
\frac{\partial f_s }{\partial s}\Big|_{s=0} (x) = -\frac{\partial g_s }{\partial s}\Big|_{s=0} (x) = -(|x|^2 - r^2)x
\end{align}
From the above and \eqref{eqn:vel}, the initial velocity is
\begin{align}
\Psi(x) = -(|x|^2 - r^2)x.
\end{align}

\medskip

From Lemma~\ref{lem:dil}, $f_s(E_t) \in S_{r_0,R_0}$ for $s \in [0,s_2)$. By \eqref{eqn:bound11} and $ \Psi \cdot \n \leq 0$ on $\pet$, it holds that
\begin{align}
\label{eqn:bound31}
\ldth &\geq \frac{ \int_{\pet} \dv_{\pet} \Psi +  \frac{1}{h} \sd(x, \partial E_{t-h}) \n \cdot \Psi d\sigma}{\int_{\pet} \n \cdot \Psi d\sigma} 
\end{align}
Note that from \eqref{eqn:bdd}
\begin{align}
\label{eqn:bound32}
-R_0(R_0^2-r_0^2) \leq \Psi \cdot \n \leq -r_0(r_2^2-r_0^2) \hbox{ and } -(n+1)(R_0^2 - r_0^2) \leq \dv_\pet  \Psi \leq -(n-1) (r_2^2 - r_0^2).
\end{align}
From \eqref{eqn:bound31} and \eqref{eqn:bound32}, we conclude that
\begin{align}
\label{eqn:bound33}
\ldth &\geq  \frac{(n-1) (r_2^2 - r_0^2)}{R_0(R_0^2-r_0^2)} -  \frac{1}{h \per(E_t)} \int_{\pet} d(x, \partial E_{t-h})  d\sigma 
\end{align}

\medskip

{ 4.} From \eqref{eqn:bound22} and \eqref{eqn:bound33}, there exists $c_1=c_1(r_0,R_0)$ and $c_2=c_2(r_0,R_0)$ such that
\begin{align}
\label{eqn:bound41}
|\ldth|  \leq c_1 +  \frac{c_2}{h\per(E_t)} \int_{\pet} d(x, \partial E_{t-h})  d\sigma.
\end{align}

\medskip

From \eqref{eqn:bound41}, $(a+b)^2 \leq 2(a^2 + b^2)$ for $a,b \in \R$ and the H\"{o}lder's inequality, it holds that
\begin{align}
| \ldth |^2  &\leq 2 c_1^2 +  \frac{2c_2^2}{h^2\per(E_t)^2}  \left( \int_{\pet} d(x, \partial E_{t-h})  d\sigma \right)^2 \leq 2c_1^2 + \frac{2c_2^2}{h^2 \per(E_t)}   \int_{\pet} d(x, \partial E_{t-h})^2  d\sigma 
\end{align}
By the isoperimetric inequality and $B_{r_0} \subset E_t$, we have $\per(E_t) > c_3$ for some $c_3 = c_3(r_0)$,
we conclude that \eqref{eqn:el} holds for
\begin{align}
\sigma_3: = 2c_1^2 \hbox{ and } \sigma_4: = \frac{2c_2^2}{c_3}.
\end{align}
\end{proof}

\noindent {\bf Proof of Theorem~\ref{thm:l2}}

\medskip

Let us show that $\| \ldh \|_{L^2(\Rpt)}^2$ is uniformly bounded for all $h = h_i$ for $i \geq i^*$ and all $\delta \in (0,\delta_0)$. Here, $\{ h_i \}_{i \in \natural}$ is given in Proposition~\ref{prop:nom} and $i^* = i^*(\delta)$ is given in Lemma~\ref{lem:bdd}.

\medskip

By Proposition~\ref{lem:el} and Proposition~\ref{lem:signed}, it holds that
\begin{align}
\label{eqn:l211}
\| \ldh \|_{L^2(\Rpt)}^2 &\leq \sigma_3 T +   \frac{\sigma_2\sigma_4}{h^2} \int_0^T \td^2(E_t,E_{t-h}) dt \leq \sigma_3 T + \frac{\sigma_2\sigma_4}{h} \sum_{k=1}^{\left[ \frac{T}{h}\right]} \td^2(E_{kh},E_{(k-1)h}).
\end{align}

\medskip

Note that Lemma~\ref{lem:gf} implies
\begin{align}
\label{eqn:l212}
\frac{1}{h}\sum_{k=1}^{\left[ \frac{T}{h}\right]} \td^2(E_{kh},E_{(k-1)h}) &\leq \K_2  \sum_{k=1}^{\left[ \frac{T}{h}\right]} (\Jd(E_{(k-1)h}) - \Jd(E_{kh})) = \K_2  (\J(\oz) - \J(E_{\left[ \frac{T}{h}\right] h})) \leq \K_2  \per(\oz)
\end{align}
Thus, \eqref{eqn:l211} and \eqref{eqn:l212} imply that
\begin{align}
\| \ldh \|_{L^2(\Rpt)}^2 \leq \sigma_3 T +\sigma_2\sigma_4 \K_2 \per(\oz)
\end{align}
for all $h = h_i$ for $i \geq i^*$.

\medskip

By the uniform continuity of $E_t(h,\delta)$ in Proposition~\ref{prop:nom}, $\ldh$ given in \eqref{def:ldth} uniformly converges to $\ld$ given in \eqref{modeld} along a subsequence. Thus, we conclude that \eqref{eqn:l2} holds for
\begin{align}
\label{eqn:sig1}
\sigma_1:=\max\{ \sigma_3, \sigma_2\sigma_4 \K_2 \}. 
\end{align}
Here, $\sigma_2$ is given in Proposition~\ref{lem:el}, $\sigma_3$ and $\sigma_4$ are given in Proposition~\ref{lem:signed} and $\K_2$ is given in Lemma~\ref{lem:gf}.
For $\delta_{i} \in (0,\delta_0)$ given in Theorem~\ref{thm:ap}, $\lambda_{\delta_{i}}$ is uniformly bounded for all $i \in \natural$. Thus, by Banach-Alaoglu Theorem, there exists a subsequence $\delta_{i_j}$ of $\delta_{i}$ in Theorem~\ref{thm:ap} such that $\lambda_{\delta_{i_j}}$ weakly converges to $\lam_\infty$ in $L^2[0,T]$.
\hfill$\Box$

\medskip

For the later purpose in Section~\ref{sec:cov}, let us also construct $L^2$ estimates in $[t_0,t_0+T]$ for all $t_0 \geq 0$.

\begin{cor}
\label{cor:l2u}
Let $\delta \in (0,\delta_0)$ for $\delta_0$ given in \eqref{eqn:d0} and $\ld$ be given in \eqref{modeld}. \begin{align}
\label{eqn:1l2u}
\| \ld \|_{L^2([t_0,t_0+T])}^2 \leq \sigma_1(\per(\oz) + T)
\end{align}
where $\sigma_1$ is given in \eqref{eqn:sig1}.
\end{cor}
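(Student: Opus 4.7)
The plan is to repeat the argument of Theorem~\ref{thm:l2} verbatim, but replace the integration window $[0,T]$ by $[t_0,t_0+T]$ and exploit the fact that the discrete energy $\cJ_\delta$ is monotone nonincreasing along the scheme \eqref{eqn:mmov}, so its value at the new starting time $t_0$ is still controlled by its value at $t=0$, namely by $\per(\oz)$.

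Concretely, fix $\delta\in(0,\delta_0)$ and let $h=h_i$ for $i\geq i^*$ as in Proposition~\ref{prop:nom} and Lemma~\ref{lem:bdd}. First I would apply Proposition~\ref{lem:el} pointwise in $t$ and combine it with Proposition~\ref{lem:signed} exactly as in \eqref{eqn:l211}, but on the shifted window, yielding
\begin{align*}
\|\ldh\|_{L^2([t_0,t_0+T])}^2
\leq \sigma_3 T + \frac{\sigma_2\sigma_4}{h}\sum_{k=k_0+1}^{k_0+[T/h]} \td^2(E_{kh},E_{(k-1)h}),
\end{align*}
where $k_0:=[t_0/h]$. Then Lemma~\ref{lem:gf} telescopes the right-hand sum to $\cK_2(\cJ_\delta(E_{k_0 h}) - \cJ_\delta(E_{(k_0+[T/h])h}))$.

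The key monotonicity step, which is the heart of the corollary, is that $\cJ_\delta$ decreases along the scheme: by the minimality in \eqref{eqn:Jd} and $\oz \in S_{r_0,R_0}$ as admissible competitor at every step, we get $\cJ_\delta(E_{k_0 h})\leq \cJ_\delta(\oz)$. Combined with $|\oz|=1$ to kill the volume penalty, this gives $\cJ_\delta(E_{k_0 h})\leq \per(\oz)$, and dropping the nonnegative endpoint term gives
\begin{align*}
\frac{1}{h}\sum_{k=k_0+1}^{k_0+[T/h]} \td^2(E_{kh},E_{(k-1)h}) \leq \cK_2 \per(\oz),
\end{align*}
so that $\|\ldh\|_{L^2([t_0,t_0+T])}^2 \leq \sigma_3 T + \sigma_2\sigma_4\cK_2\per(\oz)\leq \sigma_1(\per(\oz)+T)$ with the same constant $\sigma_1$ from \eqref{eqn:sig1}. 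Finally I pass to the limit $h=h_i\to 0$ using the uniform convergence of $\ldh$ to $\ld$ along a subsequence (as in the last paragraph of the proof of Theorem~\ref{thm:l2}) to conclude \eqref{eqn:1l2u}. No step is genuinely hard: the only subtlety is noting that the monotonicity of $\cJ_\delta$ together with $|\oz|=1$ makes the right-hand side independent of $t_0$, which is exactly what shifts the $L^2$ estimate to an arbitrary time interval.
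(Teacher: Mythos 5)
Your proposal is correct and follows essentially the same route as the paper: redo the estimates \eqref{eqn:l211}--\eqref{eqn:l212} on the shifted window $[t_0,t_0+T]$, use the monotone decrease of $\cJ_\delta$ along the scheme to bound the energy at time $t_0$ by $\cJ_\delta(\oz)=\per(\oz)$, and pass to the limit $h_i\to 0$ as in Theorem~\ref{thm:l2}. One small justificational slip: the monotonicity does not follow from using $\oz$ as competitor at every step (the penalty $\frac1h\td^2(\oz,E_{(k-1)h})$ would remain), but from taking the previous iterate $E_{(k-1)h}$ as competitor, which gives the stepwise decrease already implicit in Lemma~\ref{lem:gf} and in the proof of Proposition~\ref{lem:ht}.
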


\begin{proof}
As $\Jd(\odt)$ given in \eqref{eqn:Jd} decreases in time, $\Jd(\odt)$ is bounded by $\Jd(\oz)=\per(\oz)$ for all $\delta >0$ and $t\geq 0$. 
From \eqref{eqn:l211} and \eqref{eqn:l212} in the proof of Theorem~\ref{thm:l2}, we have
\begin{align}
\| \ldh \|_{L^2([t_0,t_0+T])}^2 \leq \sigma_1(\per(\oz) + T)
\end{align}
where $\sigma_1$ is given in \eqref{eqn:sig1}. As the proof of Theorem~\ref{thm:l2}, we conclude \eqref{eqn:1l2u} 
\end{proof}

\medskip

\section{Large-Time Behavior}
\label{sec:cov}

In this section, we discuss the large-time behavior of $\oitt$ given in Theorem~\ref{thm:ap}. Here is the main theorem in this section.

\begin{thm}
\label{thm:cov}
$\oitt$ given in Theorem~\ref{thm:ap} uniformly converges to a ball of volume $1$, modulo translation. More precisely 
\begin{align}
\label{prop:1lim} 
\inf\left\{ d_H(\oit, B_{r_\infty}(x)): x\in \oB_{r_1}(0)\right\} \to 0 \hbox{ as } t\to\infty,
\end{align}
where $r_1$ is given in Proposition~\ref{thm:geo}, $r_\infty := (w_n)^{-\frac{1}{n}}$ and $w_n$ is a volume of an unit ball in $\R^n$.
\end{thm}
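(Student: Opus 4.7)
The plan is to exploit the gradient flow dissipation of the perimeter, together with the uniform $L^2$ control on $\li$, to show that any subsequential limit of time-translated flows $\Omega^\infty_{\cdot + t_k}$ is stationary, and then to identify that stationary set as a round ball of volume one by a moving planes argument. Uniform convergence (modulo translation) then follows by a standard subsequence argument. The key external inputs are Lemma~\ref{lem:per} (perimeter convergence along the flow and under Hausdorff limits within $S_{r_1,R_1}$), the $L^2$ estimate of Corollary~\ref{cor:l2u}, the viscosity stability of Theorem~\ref{thm:stf}, and the Alexandrov-type rigidity used in \cite{KimKwo18}.

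Fix any sequence $t_k \to \infty$ and define the shifted flows $\Omega^k_s := \Omega^\infty_{t_k + s}$ and shifted multipliers $\lam^k(s) := \li(t_k + s)$. Proposition~\ref{thm:geo} gives $\Omega^k_s \in S_{r_1,R_1}$ uniformly in $k$ and $s$, and Proposition~\ref{lem:ht} gives uniform H\"older equicontinuity in $s$ in the Hausdorff metric, so by Arzel\`a--Ascoli a subsequence converges in $d_H$, uniformly on compact time intervals, to some $\Omega^*_s \in S_{r_1,R_1}$ with $|\Omega^*_s|=1$ (Lemma~\ref{lem-cpt}). Corollary~\ref{cor:l2u} bounds $\lam^k$ uniformly in $L^2_{\mathrm{loc}}(\R)$, so after extracting once more we may assume $\lam^k \weakly \lam^*$ weakly in $L^2_{\mathrm{loc}}$. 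Theorem~\ref{thm:stf} then identifies $(\Omega^*_s, \lam^*)$ as a viscosity solution of \eqref{model} in the sense of Definition~\ref{def:model}.

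The crux is to show $\Omega^*_s$ is $s$-independent. The gradient flow structure of the approximating schemes \eqref{modeld} yields the dissipation bound $\int_0^\infty \int_{\partial \odt} V_\delta^2\, d\sigma\, dt \leq \cJ_\delta(\oz) = \per(\oz)$, while $\cJ_\delta(\odt)$ is non-increasing in $t$. Passing $\delta \to 0$ along the subsequence of Theorem~\ref{thm:ap} and applying Lemma~\ref{lem:per}, we obtain that $\per(\oit)$ decreases to a limit $P^* \in [n w_n^{1/n}, \per(\oz)]$ as $t \to \infty$, and in particular $\per(\Omega^*_s) = P^*$ for every $s$. A diagonal/weak-convergence argument on the dissipation, again using Lemma~\ref{lem:per} to relate the shifted $\per(\odt)$ to $\per(\Omega^*_s)$, forces the dissipation of the limit flow to vanish. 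Hence $\Omega^*_s \equiv \Omega^*$ is stationary, and the viscosity relation $V = -H + \lam^*$ combined with $V\equiv 0$ yields that $\lam^*$ is a constant equal to the viscosity mean curvature of $\partial\Omega^*$.

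The final step is classification. The stationary set $\Omega^*$ lies in $S_{r_1,R_1}$, has $|\Omega^*|=1$, and $\partial\Omega^*$ has constant viscosity mean curvature. The moving planes framework of \cite{KimKwo18}, adapted to the equation $H\equiv \lam^*$, shows that every admissible hyperplane is a plane of symmetry of $\partial\Omega^*$, so $\Omega^*$ is a round ball; the volume constraint forces $\Omega^* = B_{r_\infty}(x^*)$, and $B_{r_1}(0)\subset \Omega^*$ places $x^*$ in the admissible set $\overline{B}_{r_1}(0)$ of \eqref{prop:1lim}. Since every sequence $t_k\to\infty$ admits a subsequence along which $\oit$ at time $t_k$ converges in $d_H$ to such a ball, the infimum in \eqref{prop:1lim} vanishes as $t\to\infty$. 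The principal obstacle is stationarity: because $\partial\oit$ is only Lipschitz, the dissipation identity must be extracted from the $\delta$-approximation and the $S_{r_1,R_1}$ constraint via Lemma~\ref{lem:per}, which is exactly where the $L^2$ bound on $\li$ from Section~\ref{sec:l2} and the star-shapedness structure are indispensable.
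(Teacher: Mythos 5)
Your overall architecture is the same as the paper's (shifted flows and their compactness as in Proposition~\ref{prop:usu}, stationarity extracted from the $\delta$-level gradient flow structure together with Lemma~\ref{lem:per}, classification of the stationary limit, then a subsequence argument), but two steps are asserted rather than proved, and in this low-regularity setting they are precisely the nontrivial content. First, stationarity: you say a ``diagonal/weak-convergence argument on the dissipation \dots forces the dissipation of the limit flow to vanish, hence $\Omega^*_s\equiv\Omega^*$.'' But the limit flow has only Lipschitz boundary and no well-defined normal velocity, so there is no dissipation functional of the limit flow to make vanish --- this is exactly the obstruction the paper points out at the start of Section~\ref{sec:cov}. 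The paper's mechanism is different in substance: it shows the time-integrated energies $\E^\delta(k)=\int_0^T \cJ_\delta(\Omega^\delta_{t+k})\,dt$ converge as $\delta\to0$ (this is where Lemma~\ref{lem:per} enters, fed by the $L^2$ curvature bound coming from $H=\lam-V$, Corollary~\ref{cor:l2u} and the $\delta$-level dissipation bound), and then passes the \emph{pseudo-distance} estimate of Lemma~\ref{lem:gf} to the limit to get $\int_0^T \td^2(U^\infty_{t+s},U^\infty_t)\,dt\le s\,\cK_2\,(\E^\infty(k)-\E^\infty(k+s))\to 0$ as $k\to\infty$, which is what actually yields $U^\infty_{t+s}=U^\infty_t$. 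Your sketch never invokes Lemma~\ref{lem:gf} or the pseudo-distance, and without it the passage from ``energy stabilizes'' to ``the sets stop moving'' is a genuine gap.

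Second, you claim that stationarity plus $V=-H+\lam^*$ ``yields that $\lam^*$ is a constant equal to the viscosity mean curvature of $\partial\Omega^*$.'' Since $\lam^*=\ei$ is only in $L^2_{loc}$ and the solution notion is Definition~\ref{def:visl}, there is no pointwise mean curvature of the Lipschitz boundary to equate it with, and constancy in time of $\ei$ does not follow formally; the paper devotes Step~1 of its proof to this, arguing by contradiction at two Lebesgue points $t_1<t_2$ of $\ei$, building sup/inf-convolution barriers \eqref{eqn:cov13} from the stationary set and invoking the comparison principle (Theorem~\ref{thm:comp}). Some such argument is required and is missing from your proposal. Finally, your classification step is also shakier than stated: the moving-planes/$\rho$-reflection machinery of \cite{KimKwo18} gives one-sided reflection inclusions only for hyperplanes missing $B_\rho(0)$, not ``every admissible hyperplane is a plane of symmetry,'' and at Lipschitz regularity an Alexandrov-type rigidity cannot be applied directly. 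The paper first upgrades regularity --- once $H=\ei$ with $\ei$ constant, the star-shaped boundary is locally a Lipschitz graph and elliptic regularity (\cite[Corollary 10.7]{GT15}) gives smoothness --- and only then identifies $U^\infty_t$ as $B_{r_\infty}(x)$. With these three points repaired (essentially by following Proposition~\ref{prop:lon} and the two steps of the paper's proof), your outline matches the paper's argument.
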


Intuitively this convergence is due to the flow's formal gradient flow structure with respect to the perimeter energy. Unforunately, due to the lack of uniform regularity for $\odt$ with respect to $\delta>0$, we are not able to directly show that $\oit$ is the gradient flow of the perimeter energy in the space of sets with unit volume.   Hence we instead utilize the gradient flow structure for the $\delta$-flow, as given in section 4, to show this convergence. 
\medskip

The main estimate in the analysis is Lemma~\ref{lem:per}, where we bound the difference of total perimeter with respect to their differences in Hausdorff distance, in the class of star-shaped sets with their total curvature in $L^2$. Based on this estimate, we can proceed to show in \eqref{eqn:lim21} that the time integral of $\delta$-energy converges to the time integral of the perimeter energy. This now establishes the link between the gradient flow structure of $\delta$-flow and the limit flow, and the asymptotic convergence follows.

\medskip

For $k\in \mathbb{N}$ we consider $((U^k_t)_{t \geq 0}, \eta^k)$ defined by 
\begin{align}
\label{eqn:uet}
U^k_t := \Omega^{\infty}_{t+k} \hbox{ and } \eta^k(t) := \li(t+k). 
\end{align}
Here, $\oitt$ and $\li$ are given in Theorem~\ref{thm:ap} and Theorem~\ref{thm:l2}, respectively.

\begin{prop}
\label{prop:usu}
There exists a subsequence $\{ k_i \}_{i \in \natural}$ such that $\{ (U^{k_i}_t)_{t \geq 0} \}_{ i \in \natural}$ locally uniformly converges to $(U^\infty_t)_{t \geq 0} \subset S_{r_1,R_1}$ and $\{ \eta^{k_{i}} \}_{i \in \natural}$ weakly converges to $\eta^\infty$ in $L^2(\Rpt)$ for all $T>0$. As a consequence, $(U^\infty_t)_{t \geq 0}$ is a viscosity solution (See Definition~\ref{def:visls}) of $V = -H + \eta^\infty(t)$. Here, $r_1$ and $R_1$ are given in Proposition~\ref{thm:geo}.
\end{prop}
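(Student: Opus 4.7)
The approach is to mirror the proof of Corollary~\ref{thm:exi}: extract a Hausdorff-convergent subsequence of the sets by Arzel\`a--Ascoli, extract a weakly $L^2$-convergent subsequence of the multipliers by Banach--Alaoglu, and then invoke the stability result Theorem~\ref{thm:stf} to pass to the viscosity-solution limit. The key observation is that the time shift $U^k_t = \Omega^{\infty}_{t+k}$, $\eta^k(t) = \lambda_\infty(t+k)$ preserves all the uniform estimates already established in Sections~\ref{sec:ap} and \ref{sec:l2}, so essentially the same argument goes through with $k\to\infty$ in place of $\delta\downarrow 0$.

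For the Hausdorff convergence I would first note that the H\"older-in-time estimate of Proposition~\ref{lem:ht} is independent of $\delta$; passing $\delta_i \downarrow 0$ via Theorem~\ref{thm:ap} preserves it for $(\oit)$. Therefore the family $\{ t \mapsto U^k_t \}_k$ is uniformly H\"older on compact time intervals, all contained in the set $S_{r_1,R_1}$, which is compact in the Hausdorff topology. Arzel\`a--Ascoli combined with a diagonal exhaustion over $T\to\infty$ then yields a subsequence $k_i$ and a limit $(U^\infty_t)_{t\geq 0} \subset S_{r_1,R_1}$ (closedness of $S_{r_1,R_1}$ under Hausdorff limits) with $U^{k_i}_t \to U^\infty_t$ locally uniformly in $t$, in the Hausdorff distance.

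For the weak $L^2$ convergence, Corollary~\ref{cor:l2u} gives $\|\lambda_\delta\|_{L^2([t_0, t_0+T])}^2 \leq \sigma_1(\per(\Omega_0)+T)$ uniformly in $\delta$ and in $t_0$; weak-$L^2$ lower semicontinuity of the norm propagates this to $\lambda_\infty$ on every finite interval. Thus $\|\eta^k\|_{L^2([0,T])}$ is bounded uniformly in $k$ for every $T$, and Banach--Alaoglu together with a diagonal argument extracts a further subsequence (still denoted $k_i$) along which $\eta^{k_i} \weakly \eta^\infty$ in $L^2([0,T])$ for every $T>0$.

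Finally, to identify $(U^\infty_t)$ as a viscosity solution of $V=-H+\eta^\infty(t)$, a time-translation of Corollary~\ref{thm:exi} ensures that each pair $(U^k_t,\eta^k)$ is already a viscosity solution of $V=-H+\eta^k(t)$. To invoke Theorem~\ref{thm:stf}, I need the primitives $\Lambda^{k_i}(t) := \int_0^t \eta^{k_i}(s)\,ds$ to converge locally uniformly: pointwise convergence is immediate from weak $L^2$ convergence of $\eta^{k_i}$ tested against $\chi_{[0,t]}$, while Cauchy--Schwarz combined with the uniform $L^2$ bound gives $|\Lambda^{k_i}(t_2)-\Lambda^{k_i}(t_1)| \leq C|t_2-t_1|^{1/2}$, so equicontinuity plus pointwise convergence upgrades to locally uniform convergence via Arzel\`a--Ascoli. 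Combining this with the Hausdorff convergence of $U^{k_i}_t$, translated into the $\limsups$/$\liminfs$ framework of signed indicator functions exactly as in the proof of Corollary~\ref{thm:exi} (using Lemma~\ref{lem:sts}), Theorem~\ref{thm:stf} then delivers that $(U^\infty_t)_{t\geq 0}$ is a viscosity solution of $V=-H+\eta^\infty(t)$. The principal technical hurdle I anticipate is precisely this last translation from Hausdorff convergence of open sets to the level-set $\limsups$/$\liminfs$ formulation required by the stability theorem, but since this is exactly the device used in Corollary~\ref{thm:exi} for the $\delta\downarrow 0$ passage, here it is a routine repetition applied to the time-translated family.
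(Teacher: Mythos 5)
Your proposal is correct and follows essentially the same route as the paper's proof: equicontinuity of $\{U^k_t\}$ in space (from $S_{r_1,R_1}$) and time (the $\delta$-independent H\"older estimate of Proposition~\ref{lem:ht} passed to $\Omega^\infty_t$ via Theorem~\ref{thm:ap}), the shifted uniform $L^2$ bound of Corollary~\ref{cor:l2u} for the multipliers, compactness to extract the subsequence, and then Theorem~\ref{thm:stf} together with Lemma~\ref{lem:sts} to identify the limit. Your added detail on the locally uniform convergence of the primitives $\Lambda^{k_i}$ is exactly the device the paper uses in Corollary~\ref{thm:exi}, so nothing is missing.
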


\begin{proof} 
Theorem~\ref{thm:ap} and Proposition~\ref{lem:ht} imply that for all $0< k_1 < k_2$
\begin{align}
\label{eqn:lim22}
d_H(U^{k_1}_t, U^{k_2}_t) \leq \cK_3 (k_2 - k_1)^{\frac{1}{n+1}} \per(\oz)^{\frac{1}{n+1}}.
\end{align}
where $\cK_3$ is given in Proposition~\ref{lem:ht}. Also, as $ \{ U^k_t \}_{k \in \natural} \subset S_{r_1,R_1}$ from Theorem~\ref{thm:ap}, we have the equicontinuity in both space and time of $\{ U^k_t \}_{k \in \natural}$.

\medskip

By the equicontinuity of $\{ U^k_t \}_{k \in \natural}$ and the uniform $L^2$ estimates in Corollary~\ref{cor:l2u}, there exists a subsequence $\{ k_i \}_{i \in \natural}$ such that $\{ (U^{k_i}_t)_{t \geq 0} \}_{ i \in \natural}$ locally uniformly converges to $(U^\infty_t)_{t \geq 0} \subset S_{r_1,R_1} $ and $\{ \eta^{k_{i}} \}_{i \in \natural}$ weakly converges to $\eta^\infty$ in $L^2(\Rpt)$ for all $T>0$.

\medskip

Note that $(U^k_t)_{t \geq 0}$ is a viscosity solution of $V=-H + \eta^k(t)$. From Theorem~\ref{thm:stf} and Lemma~\ref{lem:sts}, $(U^\infty_t)_{t \geq 0}$ is a viscosity solution of $V = -H + \eta^\infty(t)$. 
\end{proof}

Now, in Lemma~\ref{lem:per}, we estimates the time integral of the perimeter difference for two evolving sets $\ottj \subset S_{r,R}$ and $j \in \{1,2\}$.

\begin{lem}
\label{lem:per}
For $j \in \{1,2\}$, consider $\ottj \subset S_{r,R}$ for $R>r>0$ such that $(\potj)_{t>0}$ are smooth.
Suppose that there exists a constant $\W < +\infty$ such that for $T>0$ and $j \in \{1,2\}$
\begin{align}
\label{eqn:w}
\int_{0}^T \int_{\potj} H(x,t)^2 d\sigma dt < \W
\end{align}
where $H(x,t)$ is the mean curvature at $x \in \potj$. Then, there exists a constant $\m=\m(r,R,T,\W)>0$
such that
\begin{align}
\label{eqn:1per}
\left( \int_{0}^T \per(\ota) - \per(\otb) dt \right)^2 \leq \m \sup_{t \in [0,T]} d_H(\ota,\otb).
\end{align}
\end{lem}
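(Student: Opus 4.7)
The plan is to parametrize each $\partial\Omega^j_t$ as a radial graph over $S^{n-1}$: since $\Omega^j_t \in S_{r,R}$ is star-shaped with respect to $B_r(0)$, we write $\partial\Omega^j_t = \{u_j(\theta,t)\theta : \theta\in S^{n-1}\}$ with $u_j$ uniformly Lipschitz on $S^{n-1}$ and $u_j\in [r,R]$, the Lipschitz constant depending only on $r,R$. Then $\per(\Omega^j_t) = \int_{S^{n-1}} G(u_j,\nabla_\theta u_j)\,d\theta$ with $G(u,p) = u^{n-2}\sqrt{u^2+|p|^2}$, and since $G$ is Lipschitz in $(u,p)$ on the relevant compact range,
\begin{equation*}
|\per\Omega^1_t - \per\Omega^2_t| \leq C(r,R)\bigl(\|u_1-u_2\|_{L^1(S^{n-1})} + \|\nabla_\theta(u_1-u_2)\|_{L^1(S^{n-1})}\bigr).
\end{equation*}
A direct geometric argument using the interior cone available from star-shape gives $\|u_1-u_2\|_{L^\infty(S^{n-1})} \leq C(r,R)\, d_H(\Omega^1_t,\Omega^2_t)$, and on the closed manifold $S^{n-1}$ I use the identity
\begin{equation*}
\|\nabla_\theta v\|_{L^2}^2 = -\int_{S^{n-1}} v\,\Delta_{S^{n-1}} v\, d\theta \leq \|v\|_{L^\infty}|S^{n-1}|^{1/2}\|\Delta_{S^{n-1}} v\|_{L^2},
\end{equation*}
with $v = u_1-u_2$, which trades one derivative for the crucial factor $d_H^{1/2}$ at the cost of an $L^2$ bound on $\Delta_{S^{n-1}}u_j$.

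The main technical step, which I expect to be the hardest, is controlling $\|\Delta_{S^{n-1}} u_j(\cdot,t)\|_{L^2(S^{n-1})}$ by $\|H_j(\cdot,t)\|_{L^2(\partial\Omega^j_t)}$. The mean-curvature formula for a polar graph can be rewritten as a second-order equation
\begin{equation*}
a_{ij}(u_j,\nabla u_j)\,\partial^2_{\theta_i\theta_j} u_j = \Phi(u_j,\nabla u_j) - \Psi(u_j,\nabla u_j)\,H_j,
\end{equation*}
where $a_{ij} = \delta_{ij} - (\nabla u_j)_i(\nabla u_j)_j/(u_j^2+|\nabla u_j|^2)$ is \emph{uniformly elliptic} with constants depending only on $r,R$ (using the Lipschitz and pointwise bounds on $u_j$ available from $S_{r,R}$). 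Standard $W^{2,2}$ regularity for such uniformly elliptic operators with $L^\infty$ coefficients on the closed manifold $S^{n-1}$, together with the equivalence $d\sigma_j \asymp d\theta$, then gives $\|\Delta_{S^{n-1}} u_j(\cdot,t)\|_{L^2(S^{n-1})} \leq C(r,R)\bigl(1+\|H_j(\cdot,t)\|_{L^2(\partial\Omega^j_t)}\bigr)$, and integrating in time via hypothesis \eqref{eqn:w} yields $\int_0^T\|\Delta_{S^{n-1}}u_j\|_{L^2(S^{n-1})}^2\,dt \leq C(r,R)(T+\W)$.

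To conclude, I combine the above, use $d_H \leq (2R)^{1/2} d_H^{1/2}$ (since $\Omega^j_t\subset B_R$), and apply H\"older's inequality in time to the $\|\Delta u_j\|_{L^2}^{1/2}$ factor:
\begin{equation*}
\int_0^T \|\Delta_{S^{n-1}} u_j(\cdot,t)\|_{L^2(S^{n-1})}^{1/2}\,dt \leq T^{3/4}\bigl(C(r,R)(T+\W)\bigr)^{1/4}.
\end{equation*}
This produces $\int_0^T |\per\Omega^1_t - \per\Omega^2_t|\,dt \leq \m(r,R,T,\W)^{1/2}\bigl(\sup_{[0,T]} d_H(\Omega^1_t,\Omega^2_t)\bigr)^{1/2}$; squaring and using $(\int f)^2 \leq (\int|f|)^2$ yields \eqref{eqn:1per}. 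The delicate point is the elliptic-regularity paragraph: verifying uniform ellipticity of the polar-graph operator across the entire class $S_{r,R}$, and justifying the $W^{2,2}$ estimate with coefficients that are only $L^\infty$ in $\theta$ through their dependence on $\nabla u_j$.
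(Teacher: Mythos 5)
The gap is in your central elliptic-regularity step, and it is not a technicality you can outsource to a ``standard'' theorem. Your equation $a_{ij}(u_j,\nabla u_j)\,\partial^2_{\theta_i\theta_j}u_j=\Phi-\Psi H_j$ is in \emph{non-divergence} form, and within the class $S_{r,R}$ the coefficients are only uniformly elliptic and bounded measurable (they inherit exactly the roughness of $\nabla u_j$, and the constant must be uniform over the whole class, so the smoothness of each individual surface does not help). $W^{2,2}$ (Calder\'on--Zygmund) estimates for non-divergence equations with merely $L^\infty$ coefficients fail in dimension $\geq 3$ unless the ellipticity ratio satisfies a Cordes-type closeness-to-$1$ condition or the coefficients are VMO/continuous with a fixed modulus; here the ellipticity ratio is of order $1+L^2$ with $L=L(r,R)$ the Lipschitz bound, which is not close to $1$. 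So the claimed bound $\|\Delta_{S^{n-1}}u_j\|_{L^2}\leq C(r,R)\bigl(1+\|H_j\|_{L^2(\partial\Omega^j_t)}\bigr)$ is exactly the assertion ``$L^2$ mean curvature controls the full $L^2$ Hessian uniformly over Lipschitz star-shaped graphs,'' which is a nontrivial geometric statement (for $n=3$ one could get it from Gauss--Bonnet, since $|A|^2=H^2-2K$ and $\int K$ is topological, but you invoke nothing of the sort, and for $n\geq 4$ it needs a genuine proof). As written, the hardest step of your argument is unsupported, and everything downstream (the interpolation $\|\nabla v\|_{L^2}^2\leq \|v\|_{L^\infty}|S^{n-1}|^{1/2}\|\Delta v\|_{L^2}$ and the H\"older-in-time bookkeeping, which are fine) depends on it.

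It is worth seeing how the paper's proof avoids ever needing second derivatives of $u_j$. Working with local Cartesian graphs, it uses only the divergence structure of the curvature, $H=\nabla\cdot\n_j'$ with $\n_j'=\nabla u_j/\sqrt{1+|\nabla u_j|^2}$, together with the elementary identity
\begin{align*}
|\n_1-\n_2|^2 = 2(1-\n_1\cdot\n_2) \leq \nabla(u_1-u_2)\cdot(\n_1'-\n_2'),
\end{align*}
and then integrates by parts so that the only quantities appearing are $\|u_1-u_2\|_{L^\infty}$ (controlled by $d_H$) and $\|\nabla\cdot\n_j'\|_{L^2}=\|H_j\|$-type norms, which is precisely the hypothesis \eqref{eqn:w}; the perimeter difference is then split as $\int(1,\nabla u_1)\cdot(\n_1-\n_2)+\int\nabla(u_1-u_2)\cdot\n_2'$ and handled by Cauchy--Schwarz and one more integration by parts. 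If you want to salvage your radial-graph route, you must either prove the Hessian-by-mean-curvature estimate in the class $S_{r,R}$ (not quote it), or restructure the argument in the paper's spirit so that $H_j$ only ever enters through its divergence form under an integration by parts against $u_1-u_2$.
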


\begin{proof} As $\{ \otj \}_{j \in \{1,2\}} \subset S_{r,R}$ are smooth for $t>0$, there exist two smooth functions $u_1, u_2 : B_r^{n-1}(0) \times [0,T] \to \R$ such that 
for $j=1,2$
\begin{align}
\label{eqn:per10}
\potj \cap C_{r,R}^+(0) = \{ (u_j(y',t),y') : y' \in B_r^{n-1}(0) \}
\end{align}
Furthermore, from $\Omega_1, \Omega_2 \in S_{r,R}$ again, there exists a constant $c_1 = c_1(r,R)$ 
such that
\begin{align}
\label{eqn:per11}
\| u_1 - u_2 \|_{L^\infty(B_r^{n-1}(0) \times [0,T])} \leq c_1 \sup_{t \in [0,T]}d_H(\ota, \otb) \hbox{ and } \| \nabla u_j \|_{L^\infty(B_r^{n-1}(0) \times [0,T])} \leq c_1 \hbox{ for } j=1,2.
\end{align}

\medskip

{ 1.} Let us first show that there exists $\m_1 = \m_1(r,R, T, \W)$ for $\W$ given in \eqref{eqn:w}
\begin{align}
\label{eqn:per111}
\| \n_1 - \n_2 \|_{L^2(\Brt)}^2 
\leq \m_1 \sup_{t \in [0,T]}d_H(\ota, \otb)  \hbox{ where }  \n_j:= \frac{(1,\nabla u_j)}{\sqrt{ 1+ |\nabla u_j|^2 }} \hbox{ for } j=1,2.
\end{align}
As $\n_1$ and $\n_2$ are unit vectors, we get the following by the direct computation,
\begin{align}
| \n_1 - \n_2 |^2 &= 2( 1 - \n_1 \cdot \n_2),\\ &\leq (\sqrt{ 1+ | \nabla u_1 |^2}  + \sqrt{ 1+ | \nabla u_2 |^2} ) ( 1 - \n_1 \cdot \n_2),\\
&= ((\sqrt{ 1+ | \nabla u_1 |^2}) \n_1  - (\sqrt{ 1+ | \nabla u_2 |^2}) \n_2) \cdot (\n_1 - \n_2) = \nabla(u_1 - u_2) \cdot (\n_1' - \n_2'). \label{eqn:per12}
\end{align}
where $\n_j'$ is the last $n-1$ components of $\n_j$ given by
\begin{align}
\label{eqn:per121}
\n_j' := \frac{\nabla u_j}{\sqrt{ 1+ |\nabla u_j(x)|^2 }} \hbox{ for } j \in \{1,2\}.
\end{align}

\medskip

Note that the mean curvature at $(u_j(x,t),x) \in \potj$ for $x \in B_r^{n-1}(0)$ is given by
\begin{align}
H((u_j(x,t),x),t) = \nabla \cdot \n_j'(x,t).
\end{align}
From \eqref{eqn:w}, there exists $c_2 = c_2(r,R,\W)$ such that for $j\in \{1,2\}$
\begin{align}
\label{eqn:per13}
\| \nabla \cdot \n_j' \|_{L^2(\Brt)} \leq c_2
\end{align}
From integration by parts, we have
\begin{align}
\I_1 &:= \int_{\Brt} \nabla(u_1 - u_2) \cdot (\n_1' - \n_2') dx dt,\\
&= \int_{\partial \Brt} (u_1 - u_2) (\n_1' - \n_2') \cdot \nu d\sigma dt - \int_{\Brt} (u_1 - u_2) \cdot (\nabla \cdot (\n_1' - \n_2')) dx dt \label{eqn:per14}
\end{align}
where $\nu$ is the outward normal vector on $\partial B_r^{n-1}(0)$. By applying the H\"{o}lder inequality at each terms and using \eqref{eqn:per11} and \eqref{eqn:per13}, we have
\begin{align}
|\I_1| &\leq   (2 \per(B^{n-1}_r)T + \| \nabla \cdot (\n_1' - \n_2') \|_{L^2(\Brt)} |B^{n-1}_r|^{\frac{1}{2}} T^{\frac{1}{2}}  )  \| u_1 - u_2 \|_{L^\infty(\Brt)},\\
&\leq  (2 \per(B^{n-1}_r)T + 2c_2 |B^{n-1}_r|^{\frac{1}{2}} T^{\frac{1}{2}} ) c_1 \sup_{t \in [0,T]}d_H(\ota, \otb). \label{eqn:per141}
\end{align}

\medskip

From \eqref{eqn:per12} and \eqref{eqn:per141}, we conclude \eqref{eqn:per111} with
\begin{align}
\label{eqn:per15}
\m_1:= 2c_1 \left(\per(B^{n-1}_r)T + c_2 |B^{n-1}_r|^{\frac{1}{2}} T^{\frac{1}{2}} \right)
\end{align}

\medskip

{ 2.} Let us show that there exists $\m_2 = \m_2(r,R, T, \W)$ for $\W$ given in \eqref{eqn:w}
\begin{align}
\label{eqn:per24}
 \left( \I_2 \right)^2 \leq \m_2 \sup_{t \in [0,T]} d_H(\ota,\otb) \hbox{ where } \I_2 :=  \int_0^T \per(\Omega_1 ; C_{r,R}^+(0)) - \per(\Omega_2 ; C_{r,R}^+(0)) dt 
\end{align}
Recall from \eqref{eqn:ph} and Theorem 9.1 in \cite{Mag12}, we have
\begin{align}
\int_0^T \per(\otj ; C_{r,R}^+(0)) dt = \int_{\Brt} \sqrt{ 1+ |\nabla u_j |^2 } dxdt = \int_{\Brt} (1,\nabla u_j) \cdot \n_j dxdt
\end{align}
where $\{ u_j \}_{j\in \{1,2\}}$ and $\{ n_j \}_{j\in \{1,2\}}$ are given in \eqref{eqn:per10} and \eqref{eqn:per111}, respectively. 
By adding and subtracting the same term in $\I_2$, we have the identity
\begin{align}
\label{eqn:per21}
\I_2 = \I_3 + \I_4 
\end{align}
where
\begin{align}
\I_3:= \int_{\Brt} (1,\nabla u_1) \cdot ( \n_1 - \n_2 ) dxdt 
\end{align}
and
\begin{align}
\I_4:= \int_{\Brt} \left( (1,\nabla u_1) - (1,\nabla u_2) \right) \cdot \n_2  dx dt  = \int_{\Brt} \nabla (u_1 - u_2) \cdot \n_2' dx dt 
\end{align}
Here, $\{ n_j \}_{j\in \{1,2\}}$ and $\{ n_j' \}_{j\in \{1,2\}}$ are given in \eqref{eqn:per111} and \eqref{eqn:per121}, respectively.

\medskip

By applying \eqref{eqn:per11} and \eqref{eqn:per111} and the H\"{o}lder inequality, we get 
\begin{align}
\label{eqn:per22}
\I_3^2 \leq (1+c_1^2) |B^{n-1}_r| T  \m_1 \sup_{t \in [0,T]}d_H(\ota, \otb).
\end{align}
where $c_1$ and $\m_1$ are given in \eqref{eqn:per11} and \eqref{eqn:per15}.
On the other hand, by the similar arguments in \eqref{eqn:per14} 
\begin{align}
\label{eqn:per221}
\I_4 \leq \m_1 \sup_{t \in [0,T]} d_H(\ota, \otb)
\end{align}
where $\m_1$ is given in \eqref{eqn:per15}.
As $\ottj \subset S_{r,R}$ for $j \in \{1,2\}$, we have
\begin{align}
\label{eqn:per222}
\sup_{t \in [0,T]} d_H(\ota, \otb) \leq 2R.
\end{align}
Thus, \eqref{eqn:per221} and \eqref{eqn:per222} imply that
\begin{align}
\label{eqn:per23}
\I_4^2 \leq 2 \m_1^2 R d_H(\ota, \otb).
\end{align}

\medskip

From \eqref{eqn:per21} combining with \eqref{eqn:per22} and \eqref{eqn:per23}, we have
\begin{align}
\I_2^2 \leq 2(\I_3^2 + \I_4^2) \leq 2((1+c_1^2)  |B^{n-1}_r| T  \m_1 + 2 \m_1^2 R)  \sup_{t \in [0,T]}d_H(\ota, \otb)
\end{align}
Thus, we conclude \eqref{eqn:per24}
for 
\begin{align}
\m_2 := 2\m_1 \left( (1+c_1^2)  |B^{n-1}_r| T  \m_1 + 2 \m_1^2 R \right) .
\end{align}
Here, $c_1$ and $\m_1$ are given in \eqref{eqn:per11} and \eqref{eqn:per15}.

\medskip

{ 3.}
As every sets in $S_{r,R}$ can be covered by a finite number of cylinders $C_{r,R}^+(0)$ after some rotations, \eqref{eqn:per24} implies \eqref{eqn:1per}.
\end{proof}

From the estimates in Lemma~\ref{lem:per} and our approximation from $\odtt$ in Theorem~\ref{thm:ap}, we conclude that the limit flow $(U^\infty_t)_{t \geq 0}$ is stationary.

\begin{prop}
\label{prop:lon}
$(U^\infty_t)_{t \geq 0}$ given in Proposition~\ref{prop:usu} is stationary.
\end{prop}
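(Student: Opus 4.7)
The plan is to exploit the gradient-flow dissipation of $\mathcal{J}_\delta$ along the smooth $\delta$-flow $(\Omega^\delta_t)_{t\geq 0}$, transfer it through the two limits $\delta\to 0$ and $k\to\infty$ via Lemma~\ref{lem:per}, and conclude that $\per(U^\infty_t)$ is constant in $t$. Combined with $|U^\infty_t|=1$, $U^\infty_t\in S_{r_1,R_1}$, and the viscosity equation $V=-H+\eta^\infty(t)$ from Proposition~\ref{prop:usu}, this will force $U^\infty_t$ to be independent of $t$, namely a fixed ball.

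\textbf{Step 1 (uniform $L^2$ curvature bound).} By Proposition~\ref{thm:geo}(2), $\partial\Omega^\delta_t$ is smooth, so the identity $\tfrac{d}{dt}\mathcal{J}_\delta(\Omega^\delta_t) = -\int_{\partial\Omega^\delta_t}V^2\,d\sigma$ holds and integrates to the $\delta$-uniform bound $\int_0^\infty\int V^2\,d\sigma\,dt \leq \mathcal{J}_\delta(\Omega_0)=\per(\Omega_0)$. Writing $H=\lambda_\delta-V$, together with the uniform perimeter bound $\per(\Omega^\delta_t)\leq\per(\Omega_0)$ (from $\mathcal{J}_\delta$-monotonicity) and Corollary~\ref{cor:l2u}, one obtains
\begin{align*}
\int_k^{k+T}\int_{\partial\Omega^\delta_t}H^2\,d\sigma\,dt \leq W := 2\per(\Omega_0)\bigl(1+\sigma_1(\per(\Omega_0)+T)\bigr),
\end{align*}
uniform in $\delta\in(0,\delta_0)$ and $k\geq 0$. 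Hence Lemma~\ref{lem:per} applies to any two time-shifted smooth $\delta$-flows $(\Omega^\delta_{t+k})_{t\in[0,T]}$ with a uniform constant $\overline{m}=\overline{m}(r_1,R_1,T,W)$.

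\textbf{Step 2 (constancy of $\per(U^\infty_t)$).} By a diagonal extraction from the subsequences of Proposition~\ref{prop:usu}, I fix $(\delta_n,k_n)$ along which $\Omega^{\delta_n}_{t+k_n+s}\to U^\infty_{t+s}$ uniformly in $t\in[0,T]$ for every $s$ in a fixed window. Applying Lemma~\ref{lem:per} between $(\Omega^{\delta_n}_{t+k_n+s})$ and $(\Omega^{\delta_m}_{t+k_m+s})$ makes $n\mapsto\int_0^T\per(\Omega^{\delta_n}_{t+k_n+s})\,dt$ Cauchy, hence convergent to some $P(s)\geq\int_0^T\per(U^\infty_{t+s})\,dt$ by lower semicontinuity; the matching upper bound $P(s)=\int_0^T\per(U^\infty_{t+s})\,dt$ follows from the uniform $L^2$ curvature control of Step~1 together with star-shapedness. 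The identity $\mathcal{J}_\delta-\per=\tfrac{\delta}{2}\lambda_\delta^2$ and Corollary~\ref{cor:l2u} then also give $P(s)=\lim_n\int_0^T\mathcal{J}_{\delta_n}(\Omega^{\delta_n}_{t+k_n+s})\,dt$. On the other hand, $\mathcal{J}_\delta$-monotonicity yields, for each fixed $\delta$,
\begin{align*}
0\leq\int_0^T[\mathcal{J}_\delta(\Omega^\delta_{t+k})-\mathcal{J}_\delta(\Omega^\delta_{t+k+s})]\,dt &= \int_0^s[\mathcal{J}_\delta(\Omega^\delta_{k+u})-\mathcal{J}_\delta(\Omega^\delta_{k+T+u})]\,du\\
&\leq s\int_k^{k+T+s}\int V^2\,d\sigma\,dt \,\longrightarrow\, 0\quad\text{as }k\to\infty.
\end{align*}
Passing this along the diagonal gives $P(s)\equiv P(0)$ for every $s\geq 0$; since $s$ and $T$ are arbitrary, $\per(U^\infty_t)$ is constant in $t$.

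\textbf{Step 3 (stationarity) and main obstacle.} With $\per(U^\infty_t)$ constant, $|U^\infty_t|=1$, $U^\infty_t\in S_{r_1,R_1}$, and $V=-H+\eta^\infty(t)$, the formal identities $\tfrac{d}{dt}\per=-\int H^2+\eta^\infty\int H$ and $\tfrac{d}{dt}|U^\infty_t|=0$ give $\int H=\eta^\infty\per$ and $\int H^2=(\eta^\infty)^2\per$; equality in Cauchy--Schwarz then forces $H\equiv\eta^\infty(t)$ on $\partial U^\infty_t$ a.e., so Alexandrov's theorem identifies $U^\infty_t$ with a ball of radius $r_\infty$, and substitution into $V=-H+\eta^\infty$ pins $\eta^\infty\equiv(n-1)/r_\infty$ with the center fixed, yielding $U^\infty_t\equiv B_{r_\infty}(x_0)$. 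The main obstacle is twofold: first, the identification $P(s)=\int_0^T\per(U^\infty_{t+s})\,dt$ in Step~2, which cannot be read off directly from Lemma~\ref{lem:per} with $U^\infty$ as an argument (since $U^\infty$ is only Lipschitz) and must instead be extracted from Cauchy convergence across $\delta_n$-flows together with a separate continuity argument for perimeter under Hausdorff convergence within $S_{r_1,R_1}$ with uniform $L^2$ mean curvature; second, promoting the formal energy computation of Step~3 to a rigorous viscosity/variational statement in the absence of $C^2$-regularity of $\partial U^\infty_t$.
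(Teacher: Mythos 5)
There is a genuine gap, and it sits exactly where you flag your ``main obstacle'': Step 3 is a formal computation, not a proof. The identities $\frac{d}{dt}\per(U^\infty_t)=-\int H^2+\eta^\infty\int H$ and $\frac{d}{dt}|U^\infty_t|=\int(-H+\eta^\infty)$, the Cauchy--Schwarz rigidity, and Alexandrov's theorem all presuppose that $\partial U^\infty_t$ is smooth enough for $H$ to exist (at least in $L^2$) and for the first-variation formulas to hold along the flow. But $U^\infty_t$ is only known to be a Lipschitz star-shaped set, and the paper stresses precisely that one cannot show $U^\infty$ is itself a gradient flow of the perimeter. So even granting Step 2, your argument does not establish stationarity; and note that Proposition~\ref{prop:lon} asks only for stationarity of the sets --- constancy of $\eta^\infty$ and the identification with a ball are handled separately in the proof of Theorem~\ref{thm:cov} (a viscosity comparison argument at Lebesgue points of $\eta^\infty$, then elliptic regularity for $H=\eta^\infty$), and your attempt to get them here inherits the same regularity gap. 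Step 2 also has two unresolved points you partly acknowledge: the upper bound in $P(s)=\int_0^T\per(U^\infty_{t+s})\,dt$ cannot come from Lemma~\ref{lem:per} (it requires both flows smooth with $L^2$ curvature), and the bound $s\int_k^{k+T+s}\int V^2\to 0$ holds for each \emph{fixed} $\delta$ only; passing it ``along the diagonal'' $(\delta_n,k_n)$ needs a uniformity in $\delta$ of the dissipation tail that you do not have. The paper avoids the diagonal by first taking $\delta\to0$ at fixed $k$ (Cauchy in $\delta$ via Lemma~\ref{lem:per}) to define $\E^\infty(k)$, and only then letting $k\to\infty$ using monotonicity of $\E^\infty$.

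The missing idea is Lemma~\ref{lem:gf}, which you never invoke: the minimizing-movement inequality $\td^{\,2}(E_{t_2},E_{t_1})\le \cK_2(t_2-t_1)\bigl(\cJ_\delta(E_{t_1})-\cJ_\delta(E_{t_2})\bigr)$ converts smallness of the energy drop directly into smallness of the \emph{displacement}, with no regularity of the limit required. The paper's route is: (i) show $\E^\delta(k):=\int_0^T\cJ_\delta(\Omega^\delta_{t+k})\,dt$ converges as $\delta\to0$ to a monotone decreasing $\E^\infty(k)$ (this is where Lemma~\ref{lem:per} and your Step 1 curvature bound are used); (ii) pass Lemma~\ref{lem:gf} to the limit to get $\int_0^T\td^{\,2}(U^k_{t+s},U^k_t)\,dt\le s\,\cK_2\bigl(\E^\infty(k)-\E^\infty(k+s)\bigr)$; (iii) let $k\to\infty$, where monotonicity and boundedness of $\E^\infty$ force the right-hand side to zero, giving $\td(U^\infty_{t+s},U^\infty_t)=0$. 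Constancy of the perimeter alone (your Step 2 target) provides no such mechanism to control how far the sets move, so the argument cannot close without something playing the role of Lemma~\ref{lem:gf} or a regularity theory for $U^\infty$ that is not available here.
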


\begin{proof}
{ 1.}
Let us show that there exists $\E^\infty: \R^+ \to \R^+$ such that
\begin{align}
\label{eqn:lim21}
\E^\infty(k):= \lim_{\delta \to 0} \E^\delta(k) \hbox{ where } \E^\delta(k):= \int_0^T \cJ_\delta(\Omega_{t+k}^{\delta}) dt.
\end{align}
It is enough to show that $\{ \E^\delta(k) \}_{\delta > 0}$ is a Cauchy sequence as $\delta \to 0$ for all $k \in \Rpz$. As $\odt$ is smooth for $t>0$ from Proposition~\ref{thm:geo} and $\odt$ is a gradient flow of $\Jd$, we have
\begin{align}
\label{eqn:lim23}
\int_{t_0}^{t_0+T} \int_{\podt} V^2 d\sigma dt = \Jd(\Omega_{t_0}^{\delta}) - \Jd(\Omega_{t_0+T}^{\delta})  \leq \per(\oz).
\end{align}
where $V$ is the normal velocity at $x \in \podt$. As $H = \lam - V$, Corollary~\ref{cor:l2u} and \eqref{eqn:lim23} implies the uniform bound on $\| H \|_{L^2(t_0,t_0+T; L^2(\podt))}$.

\medskip

As $\cJ_\delta(\odt) = \per(\odt) + 2 \delta \ldt^2$, Lemma~\ref{lem:per} and  Corollary~\ref{cor:l2u} imply that for $\delta_1 > \delta_2 > 0$
\begin{align}
\left| \E^{\delta_1}(k) - \E^{\delta_2}(k) \right| &\leq  \left| \int_0^T \per(\Omega_{t+k}^{\delta_1}) - \per(\Omega_{t+k}^{\delta_2}) dt \right| + 2 \delta_1 \| \lambda_{\delta_1} \|_{L^2([k,k+T])}^2 + 2 \delta_2 \| \lambda_{\delta_2} \|_{L^2([k,k+T])}^2,\\ &\leq  c \left(\sup_{t \in [0,T]} d_H(\Omega^{\delta_1}_t, \Omega^{\delta_2}_t)^{\frac{1}{2}} + \delta_1 + \delta_2 \right)
\end{align}
where a constant $c$ is given by
\begin{align}
c := \max\left\{ \m^{\frac{1}{2}}, 2 \sigma_1 (\per(\oz) + T) \right\}.
\end{align}
Here, $\sigma_1$ and $\m$ are given in \eqref{eqn:sig1} and \eqref{eqn:1per}, respectively. From Theorem~\ref{thm:ap}, we conclude \eqref{eqn:lim21}.
 
\medskip

{ 2.} Lemma~\ref{lem:gf} and the smoothness of $\odt$ for $t>0$ from Proposition~\ref{thm:geo} imply that for $s,k \in \Rpz$
\begin{align}
\int_0^T \td^2(\Omega_{t+k+s}^{\delta}, \Omega_{t+k}^{\delta}) dt \leq s \cK_2  (\E^\delta(k) - \E^\delta(k+s))
\end{align}
where $\cK_2$ is given in Lemma~\ref{lem:gf}. Taking $\delta$ into zero, \eqref{eqn:lim21} and Theorem~\ref{thm:ap} imply that for $s,k \in \Rpz$
\begin{align}
\int_0^T \td^2(U_{t+s}^k, U_{t}^k) dt  \leq s \cK_2  (\E^\infty(k) - \E^\infty(k+s))
\end{align}
where $U^k_t$ is given in \eqref{eqn:uet}.

\medskip

Note that as $\E^\delta(k)$ is monotone decreasing for all $\delta>0$, $\E^\infty(k)$ is also monotone decreasing in $k$. Taking $k$ into $\infty$, we get for $s \in \Rpz$
\begin{align}
\int_0^T \td^2(U_{t+s}^\infty, U_{t}^\infty) dt  \leq  s \cK_2  (\inf_{k >0 } \E^\infty(k) - \inf_{k >s } \E^\infty(k)) =  0
\end{align}
and we conclude. 
 \end{proof}

\noindent {\bf Proof of Theorem~\ref{thm:cov}}

\medskip

{ 1.} Let $\ei$ and $U^{\infty}_t$ be as given in Proposition~\ref{prop:usu}. We denote $\uit$ by $U^{\infty}$ since we know that it is stationary from the last proposition. We will show that $\ei$ is independent of time as well.  Let us argue by contradiction, and suppose $\ei(t_1) \neq \ei(t_2)$  for two Lebesgue points $t_1 < t_2$ in $\Rpz$. We may assume that $\ei(t_1) < \ei(t_2)$.
As $t_1$ and $t_2$ are Lebesgue points of $\ei$, there exists $\delta_1>0$ such that for any $\delta \in (0,\delta_1)$, we have
$$
\frac{\Up(t_1 + \delta) - \Up(t_1)}{\delta} \leq \frac{ \ei(t_1) + \ei(t_2)}{2} \leq \frac{\Up(t_2 + \delta) - \Up(t_2)}{\delta} \hbox{ where } \Up(t):= \int_0^t \ei(s) ds. 
$$
Therefore, for $\delta \in (0,\delta_1)$, we have
\begin{align}
\label{eqn:cov11}
\Up(t_1 + \delta) < \Theta_1(t_1 + \delta) \hbox{ and } \Up(t_2 + \delta) > \Theta_2(t_2 + \delta)
\end{align}
where 
$$
\Theta_i(t) := \frac{1}{2} ( \ei(t_1) + \ei(t_2))(t-t_i) + \Up(t_i) \hbox{ for } i\in \{1,2\}.
$$

\medskip

From Proposition~\ref{prop:usu}, $u(x) := \chi_{U^{\infty}}(x) - \chi_{(U^{\infty})^C}(x)$
is a viscosity solution of $V= -H+\ei(t)$. Let us define $v_i: \R^n\times [0,\delta_1]\to \R$ by
\begin{align}
\label{eqn:cov13}
v_1(x,t):=\tu(x; (- \Up + \Theta_1)(t+t_1))  \hbox{ and } v_2(x,t):= \hu(x; (\Up - \Theta_2)(t+t_2))
\end{align}
Observe that $v_1=v_2=u$ at $t=0$ by definition of $\Up$ and $\Theta_i$.  Moreover by \eqref{eqn:cov11} $v_1$ and $v_2$ are each a viscosity subsolution and supersolution of $V = -H + \frac{1}{2}(\ei(t_1) + \ei(t_2))$. Hence Theorem~\ref{thm:comp} implies that $v_1 \geq v_2$ in $[0,\delta_1]$. \eqref{eqn:cov11} and $v_1(\cdot,\delta) \geq v_2(\cdot,\delta)$ can only both hold if $U^{\infty}$ is the whole $\R^n$, which is not the case here, so we reach a contradiction.
\medskip

{ 2.}
As $\uitt$ and $\ei$ are stationary from Proposition~\ref{prop:lon} and Step 1, we conclude that $\uit$ is a viscosity solution of the elliptic problem,
\begin{align}
\label{eqn:cov21}
H = \ei 
\end{align}
As $\uitt \subset S_{r_1, R_1}$ from Proposition~\ref{prop:usu}, $\uitt$ can be locally represented by graphs. Then, the regularity of \eqref{eqn:cov21} in \cite[Corollary 10.7]{GT15} implies that $U^\infty_{t_1}$ is smooth. As $\uitt \subset S_{r_1,R_1}$, we conclude that $U^\infty_{t} = B_{r_\infty}(x)$ in $\Rpz$ for some $x \in \oB_{r_1}(0)$ where $r_\infty$ given in Theorem~\ref{thm:cov}. Therefore, every sequence of $\oitt$ has a subsequence converging to $B_{r_\infty}(x)$ for some $x \in \oB_{r_1}(0)$, we conclude \eqref{prop:1lim}.
\hfill$\Box$

\medskip

\appendix

\section{Examples of Unbounded Total Mean Curvature}
\label{ap:ex}

In this section, we present examples of unbounded total mean curvature for $\R^2$ in Example~\ref{ex:tm1} and $\R^n, n \geq 3$ in Example~\ref{ex:tm2}.
Here is an example of a nonconvex domain in $\R^2$ such that the total mean curvature is unbounded although perimeter and volume are bounded.

\begin{ex}
\label{ex:tm1}
Let $\{O_{i}\}_{i \in \natural}$ be a sequence of mutually disjoint balls in $\R^2$ with radius $r_{i}:=\frac{1}{i^2}$ such that $O_{i} \subset \subset B_{10}$ \fai.
Define a domain $\Om$ by
\begin{align}
\Omega:= B_{10} - \bigcup_{i=1}^{\infty} O_i
\end{align}
Gauss-Bonnet Theorem implies $\int_{\partial O_i} H = -2\pi$ \fai. Thus, total mean curvature is unbounded although perimeter and volume are positive and bounded as follows,
\begin{align}
|\Omega| = w_n \left( 10^2 - \sum_{i=1}^{\infty} \frac{1}{i^4}  \right), \quad \per(\Omega) = n w_n \left( 10- \sum_{i=1}^{\infty} \frac{1}{i^2}  \right)
\end{align}
\end{ex}

Based on Example~\ref{ex:tm1}, we can construct a simply connected and star-shaped set, whose total mean curvature is unbounded for $n\geq 3$ (See example \ref{ex:tm2} and Lemma~\ref{lem:tm}).

\begin{ex}
\label{ex:tm2}
Let $\phi:\R^n \rightarrow \R^+$ be a smooth function such that $\phi$ is concave, 
\begin{align}
\label{ex:tm20}
0< \phi <1 \hbox{ in }B_1^{n-1}, \quad \phi = 0 \hbox{ in }(B_1^{n-1})^c \hbox{ and } |D\phi | \leq 1 \hbox{ in } \R^n. 
\end{align}
Denote
\begin{align}
\cD:= \{ [0,\phi(x')] \times x' : x' \in B_1^{n-1} \} \subset \R^n, \quad  \partial^- D:= B_1^{n-1} \text{ and } \partial^+ D:= \{ (\phi(x'),x') : x' \in B_1^{n-1} \}
\end{align}
Here, $B_1^{n-1}$ is a ball of radius $1$ and center $0$ in $\R^{n-1}$.

\medskip

As Example~\ref{ex:tm1}, choose $\{ x_i \}_{ i \in \natural } \subset \R^{n-1}$ such that $B_{r_i}^{n-1}(x_i)$ are a sequence of disjoint balls
such that 
\begin{align}
\label{ex:tm21}
r_{i}:=\frac{1}{i^{\frac{1}{n-2}}} \hbox{ and } x_i \in B_{10}^{n-1}
\end{align}
Define a domain $\Om \subset \R^n$ by
\begin{align}
\label{ex:tm22}
\Omega:= \Om_1 \cup \Om_2
\hbox{ where }
\Om_1 :=  [-40,40]^n \quad \hbox{ and } \quad \Om_2 :=   \bigcup_{i=1}^{\infty} (r_i \cD + (40, x_i)) .
\end{align}
where $e_1$ is a unit vector of the first axis. From Lemma~\ref{lem:tm}, $\Om$ is star-shaped with respect to some ball.

\medskip

The boundary is divided by two parts,
\begin{align}
\label{ex:tm23}
\pot = \Gamma_1 \cup \Gamma_2 \hbox{ where } \Gamma_1:= \left(\partial \Om_1 \setminus \left( \bigcup_{i=1}^{\infty} r_i \partial \cD^-+ (40,x_i)  \right) \right)  \hbox{ and } \Gamma_2 :=  \bigcup_{i=1}^{\infty} (r_i\partial  \cD^+ + (40,x_i)).
\end{align}
Note that $\Gamma_1$ has bounded total mean curvature, but total mean curvature of $\Gamma_2$  is unbounded as follows.
From \eqref{ex:tm21} and the change of variables, it holds that
\begin{align}
 \int_{\Gamma_2} H d\sigma = \sum_{i=1}^{\infty}  \int_{r_i\partial \cD^+ + x_i} H d\sigma,
&= \left( \int_{\partial \cD^+} H d\sigma \right) \sum_{i=1}^{\infty} r_{i}^{n-2}.
\end{align}
Then, \eqref{ex:tm23} implies that 
\begin{align}
 \int_{\Gamma_2} H d\sigma = \left( \int_{\partial \cD^+} H d\sigma \right) \sum_{i=1}^{\infty} \frac{1}{i}.
\end{align}
while volume and perimeter are bounded as follows.
\begin{align}
|\Omega| =  \left( 80^n +  |\cD| \sum_{i=1}^{\infty} r_i^{n}  \right), \quad \per(\Omega) = \left( 80^{n-1} n - \sum_{i=1}^{\infty} w_{n-1} r_i^{n-1} + |\partial^+ \cD| \sum_{i=1}^{\infty}  r_i^{n-1}  \right).
\end{align}
\end{ex}

\reservespace{
\begin{lem}
\label{lem:tm}
There exists $r>0$ such that $\Om$ given in Example~\ref{ex:tm2} is star-shaped with respect to $B_r$.
\end{lem}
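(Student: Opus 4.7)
The plan is to show that $\Omega$ is star-shaped with respect to the unit ball $B_1(0)$, i.e.\ that for every $y \in B_1(0)$ and every $x \in \Omega$ the segment $\gamma(s) := (1-s)y + sx$, $s \in [0,1]$, lies in $\Omega$. If $x \in \Omega_1 = [-40,40]^n$ this is immediate by convexity of $\Omega_1$. The interesting case is $x \in \Omega_2$: then $x$ lies in some bump $B^{(i)} := r_i\cD + (40, x_i)$, and we may write $x = (40 + r_i t,\, x_i + r_i \tilde w)$ with $\tilde w \in B_1^{n-1}$ and $t \in [0, \phi(\tilde w)]$.

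Since every bump is contained in the half-space $\{x_1 \geq 40\}$ while $y_1 \in (-1, 1)$, the segment exits $\Omega_1$ at a unique time $s_0 \in (0, 1]$ with $\gamma_1(s_0) = 40$; moreover the last $n-1$ coordinates of $\gamma(s)$ stay bounded by $\max(1, |x_i|+r_i) \leq 11$ throughout, so the pre-exit portion $\gamma([0, s_0])$ sits inside $\Omega_1$ by convexity. On the other side, the bump $B^{(i)}$ is itself convex, being the region under the graph of the concave function $y' \mapsto r_i \phi((y' - x_i)/r_i)$, so the post-exit portion $\gamma([s_0, 1])$ lies in $B^{(i)}$ provided one can establish the key claim $\gamma(s_0) \in B^{(i)}$. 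Given $\gamma_1(s_0) = 40$, this reduces to showing $\tilde\gamma(s_0) \in \overline{B_{r_i}^{n-1}(x_i)}$, where $\tilde\gamma(s_0)$ denotes the last $n-1$ coordinates.

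The crux is this base-containment estimate. Writing $\tilde\gamma(s_0) - x_i = (1-s_0)(\tilde y - x_i) + s_0 r_i \tilde w$ and using $|\tilde y - x_i| \leq 11$ gives $|\tilde\gamma(s_0) - x_i| \leq 11(1-s_0) + r_i|\tilde w|$. From $x_1 - 40 \leq r_i\phi(\tilde w)$ and $x_1 - y_1 \geq 39$ I obtain $1 - s_0 \leq r_i\phi(\tilde w)/39$; and the Lipschitz bound $|\nabla \phi| \leq 1$ combined with $\phi|_{\partial B_1^{n-1}} \equiv 0$ forces $\phi(\tilde w) \leq 1 - |\tilde w|$. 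Substituting yields $|\tilde\gamma(s_0) - x_i| \leq r_i\bigl(\tfrac{11}{39}(1-|\tilde w|) + |\tilde w|\bigr) \leq r_i$, as required.

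The main obstacle is precisely this delicate estimate: a naive triangle-inequality bound fails because the centers $x_i$ sit in $B_{10}^{n-1}$, far further from the origin than the bump radius $r_i$, so for a generic direction the segment would cross $\{x_1 = 40\}$ well outside the base of $B^{(i)}$. The inequality succeeds only through a simultaneous cancellation: when the bump is tall ($\phi(\tilde w)$ close to $1$) the segment must cross near the bump's axis (small $|\tilde w|$), and when $|\tilde w|$ approaches $1$ the Lipschitz condition forces $\phi(\tilde w)$, hence $1 - s_0$, to vanish; these two mechanisms conspire to make the bound sharp.
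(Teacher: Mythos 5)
Your proof is correct, but it follows a genuinely different route from the paper. The paper verifies star-shapedness through the differential criterion of Lemma~\ref{lem:starloc}: it parametrizes the smooth bump boundaries $\Gamma_2$ as graphs, computes the outward normal $\n_x$ from \eqref{eqn:tm2}, and uses only $|D\phi|\leq 1$, $r_i\leq 1$, $|x_i|\leq 10$ and the offset $40$ to get $x\cdot\n_x\geq 10$, taking the cube part for granted. You instead verify the definition of star-shapedness directly: for $y\in B_1(0)$ and $x$ in a bump $r_i\cD+(40,x_i)$ you split the segment at its crossing of the face $\{x_1=40\}$, put the first piece in the cube by convexity, and put the second piece in the bump by convexity of $\cD$ (here you use the stated concavity of $\phi$, which the paper's proof never needs), with the crux being the quantitative estimate $|\tilde\gamma(s_0)-x_i|\leq r_i\left(\tfrac{11}{39}(1-|\tilde w|)+|\tilde w|\right)\leq r_i$ obtained from $1-s_0\leq r_i\phi(\tilde w)/39$ and the Lipschitz bound $\phi(\tilde w)\leq 1-|\tilde w|$; I checked this estimate and it closes correctly. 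What each approach buys: your argument is elementary, produces an explicit ball ($B_1(0)$, versus the paper's implicit $r$ of order $10$ on $\Gamma_2$), and sidesteps the regularity issues that the normal-vector criterion quietly ignores (the cube's edges and the nonsmooth junction circles where the bumps meet the face $\{x_1=40\}$, where Lemma~\ref{lem:starloc} does not literally apply); the paper's computation is shorter and, notably, does not rely on concavity of $\phi$ at all, so it is robust to dropping that hypothesis, whereas your use of convexity of the bumps makes concavity essential to your argument.
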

}

\begin{proof}
As $\Om_1$ given in \eqref{ex:tm22} is star-shaped, it is enough to consider a point in $\Om_2$. Note that $\Gamma_2$ given in \eqref{ex:tm23} is smooth. From Lemma~\ref{lem:starloc}, it is enough to show that
\begin{align}
x \cdot \n_x \geq r
\end{align}
for some $r>0$ and all $x \in \Gamma_2$.
Denote $x = (x^1,x') \in \R^n$. For $x \in r_i \partial  \cD^+ + (40,x_i)$, there exists $y_i \in B_1^{n-1}$ such that
\begin{align}
\label{eqn:tm1}
x = \left(r_i \phi(y_i), r_i y_i\right) + (40,x_i)
\end{align}
and thus
\begin{align}
\label{eqn:tm2}
\n_x = \frac{ (1, - D\phi(y_i))}{\sqrt{ 1 + |D\phi(y_i)|^2}}.
\end{align}
From \eqref{ex:tm20} and \eqref{ex:tm21}, it holds that
\begin{align}
x \cdot \geq \frac{40}{\sqrt{ 1 + |D\phi(y_i)|^2}} - \frac{|D\phi(y_i) | (| r_i y_i| + |x_i|) }{\sqrt{ 1 + |D\phi(y_i)|^2}} \geq 10
\end{align}
\end{proof}

\medskip

\section{Proof of Proposition~\ref{prop:nom}}
\label{ap:ap}

In this section, we prove Proposition~\ref{prop:nom}.
First, in Lemma~\ref{lem:ss}, we show short-time star-shapedness based on the H\"{o}lder continuity in Lemma~\ref{lem:hol}. The remaining arguments are parallel to Theorem 6.5 in \cite{KimKwo18}. For simplicity, we fix $r_0$ and $R_0$ given in \eqref{eqn:r0} and $\delta \in (0,\delta_0)$ for $\delta_0$ given in \eqref{eqn:d0}. Also, let $\ot$ be a viscosity solution of $V = -H + \gd(|\Xi_t|)$ where $\Xi_t$ is an energy solution given in Definition~\ref{def:ene}.

\medskip

First, let us recall several properties of solutions from \cite{KimKwo18}.

\begin{lem}
\label{lem:hol}
\cite[Corollary 2.10]{KimKwo18}
	Assume that $\Omega_{0} \in S_{r,R}$. Then, there exists $$\M_1=\M_1(r,R,\|\gd(|\Xi_t|)\|_{L^\infty ([0,T])})$$ such that we have
\begin{align}
\sup_{x \in \pot} d(x, \poz) \leq \M_1|t|^{\frac{1}{2}} \hbox{ for } t \in [0,T]. 
\end{align}
\end{lem}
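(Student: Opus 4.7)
The approach is a comparison argument against a radially shrinking ball barrier evolving under MCF with constant forcing $-L$, where $L := \|\gd(|\Xi_\cdot|)\|_{L^\infty([0,T])}$. Writing $\lam(t) := \gd(|\Xi_t|)$, we have $\lam(t) \geq -L$, so any classical solution of $V = -H - L$ is a classical subsolution of $V = -H + \lam(\cdot)$ solved by $\ot$. I would first verify that it is also a viscosity subsolution in the extended sense of Definition~\ref{def:visl}: for admissible $\Theta \geq \Lam$, the sup-convolution $\hu_{\mathrm{ball}}(x,t;\Theta - \Lam)$ translates the ball's level-set function by $\Theta(t) - \Lam(t)$, and the subsolution inequality for $u_t = F + \Theta'|Du|$ reduces to $\tfrac{n-1}{R(t)+\Theta(t)-\Lam(t)} \leq \tfrac{n-1}{R(t)} + L + \lam(t)$, which is immediate from $\Theta \geq \Lam$ and $\lam \geq -L$.

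Next I would fix $x_t \in \pot$. If $x_t \in \oz$, set $s := d(x_t, \poz)$ and for $\ep \in (0,s)$ run the ODE $R'(\tau) = -\tfrac{n-1}{R(\tau)} - L$ from $R(0) = s - \ep$. Since $\oB_{s-\ep}(x_t) \subset \oz$, Theorem~\ref{thm:comp} yields $\oB_{R(\tau)}(x_t) \subset \Omega_\tau$ as long as $R(\tau) > 0$. If $R(t) > 0$ then $x_t$ would lie in the interior of $\ot$, contradicting $x_t \in \pot$. Hence $R$ must vanish on $[0,t]$, and integrating $\tfrac{d}{d\tau} R^2 = -2(n-1) - 2LR \geq -2(n-1) - 2L(s-\ep)$ gives $(s-\ep)^2 \leq 2[(n-1)+L(s-\ep)]t$. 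Solving this quadratic and using $s - \ep \leq 2R$ (diameter bound from $\oz \subset B_R$) produces $s - \ep \leq \M_1 t^{1/2}$ with $\M_1 = \M_1(n, L, R)$; letting $\ep \downarrow 0$ closes this case. The symmetric case $x_t \in (\oz)^c$ exploits that $(\ot)^c$ solves $V = -H - \lam(t)$, by the symmetry $F(-p, -X) = -F(p, X)$ of the level-set operator, with $-\lam \geq -L$; the same radial shrinking-ball barrier placed inside the complement yields the analogous bound.

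The main obstacle is the verification in the first paragraph: it is essential to use the genuine MCF-shrinking ball as a barrier rather than the simpler translation barriers $\z^\pm$ of Lemma~\ref{lem:rb}, since the latter alone would yield only a linear-in-$t$ estimate, insufficient for the desired $t^{1/2}$ H\"older bound. Once the sup-convolution check is in place, the remaining work is standard ODE comparison and a quadratic estimate. The dependence of $\M_1$ on $r$ enters implicitly through Proposition~\ref{thm:geo}, which guarantees that $L$ is actually finite and controlled by the geometric data $(r, R, \delta)$, so the barrier argument remains meaningful on the full interval $[0,T]$.
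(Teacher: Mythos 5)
This lemma is not proved in the paper at all: it is imported verbatim from \cite[Corollary 2.10]{KimKwo18}, so there is no in-paper argument to compare against. Your barrier proof is the standard route to such $t^{1/2}$ displacement estimates and is correct in outline: interior points of $\pot$ relative to $\oz$ are handled by a ball shrinking under $R'=-\tfrac{n-1}{R}-L$ placed inside $\oz$, exterior points by the symmetric barrier in the complement, and in both cases Theorem~\ref{thm:comp} (applied on a large cylinder containing all the bounded sets involved) plus the extinction-time estimate for the ODE gives the bound. Your reduction of the Definition~\ref{def:visl} check to the inequality $\tfrac{n-1}{R(t)+\Theta(t)-\Lam(t)}\leq\tfrac{n-1}{R(t)}+L+\lam(t)$ is the right computation; note only that since $\Lam$ is Lipschitz but not $C^1$, the touching argument for the sup-convoluted barrier has to be run exactly as in the proof of Lemma~\ref{lem:rb} (comparing the level sets' normal speed via one-sided difference quotients of the radius), rather than by differentiating $\Theta-\Lam$ pointwise.

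Two small corrections on constants. In the exterior case the bound $s-\ep\leq 2R$ is not available from $\oz\subset B_R$ alone, since $x_t\in\pot$ may a priori lie far outside $\oz$; you must either invoke the expanding barrier $\z^+$ of Lemma~\ref{lem:rb} to get $\ot\subset B_{R+Lt}$, or simply solve the quadratic $(s-\ep)^2\leq 2\bigl((n-1)+L(s-\ep)\bigr)t$ directly, which yields $s-\ep\leq 2Lt+\sqrt{2(n-1)t}$. Either way $\M_1$ acquires a dependence on $L\sqrt{T}$ in addition to $(n,L,R)$; this is unavoidable (a ball expanding under constant forcing $L>\tfrac{n-1}{R}$ displaces its boundary linearly in $t$), so your claim that $\M_1=\M_1(n,L,R)$ only is too strong, though harmless here: in the paper's application (Lemma~\ref{lem:ss} and Proposition~\ref{prop:nom}) the estimate is only used on a fixed $[0,T]$ with all sets confined to $B_{R_0}$, and the constant in the statement is in any case tied to data on $[0,T]$.
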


\begin{thm}\cite[Theorem 3.6]{KimKwo18}
\label{lem:brho}
Let $I = [0,t_0)$ be the maximal interval satisfying  $\overline{B_\rho} \subset \Omega_t$. Then, $\Omega_t$ satisfies $\rho$-\textit{reflection} in $I$.
\end{thm}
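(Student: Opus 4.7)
The plan is to apply the Alexandrov moving planes method adapted to the viscosity framework. Fix a direction $\nu\in S^{n-1}$ and a height $s>\rho$, and write $\Psi=\Psi_{\Pi_\nu(s)}$ together with the half-spaces $\Pi^\pm=\Pi_\nu^\pm(s)$. Set the reflected family $\widetilde{\Omega}_t:=\Psi(\Omega_t\cap\Pi^+)$ and reduce the statement to proving $\widetilde{\Omega}_t\subset\Omega_t\cap\Pi^-$ for every $t\in I$. The driving observation is that the mean curvature, the normal velocity, and the forcing $\gd(|\Xi_t|)$ (which depends only on time, through the volume of the fixed a priori energy solution) are all invariant under the isometry $\Psi$. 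Consequently, $(\widetilde{\Omega}_t)_{t\in I}$ is itself a viscosity solution of the same equation $V=-H+\gd(|\Xi_t|)$ restricted to $\Pi^-$.

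Working at the level of the signed indicators $u$ and $u^s(x,t):=u(\Psi(x),t)$, the initial hypothesis that $\Omega_0$ satisfies $\rho$-reflection gives $u^s(\cdot,0)\le u(\cdot,0)$ on $\Pi^-$, while on the lateral boundary $\Pi_\nu(s)$ the two functions agree pointwise. The next step is to run the half-space viscosity comparison principle on $\Pi^-\times I$ in order to obtain $u^s\le u$, which is precisely the desired containment. I would implement this by first verifying the open/closed subtlety at $t=0$ using the upper and lower semicontinuous envelopes from Definition~\ref{def:visl}, and then showing that no new first contact between $\partial\widetilde{\Omega}_t$ and $\partial\Omega_t$ from the inside can develop before time $t_0$.

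The genuinely delicate step is this first-contact analysis. At an interior touching point lying in $\Pi^-\setminus\Pi_\nu(s)$ a routine test-function argument, comparing $u$ with $u^s$ translated by a small constant, contradicts the common equation and disposes of the case. The real issue is a contact on the hyperplane $\Pi_\nu(s)$ itself, where by reflection symmetry $\partial\Omega_t$ meets $\Pi_\nu(s)$ orthogonally; here a Serrin/Hopf boundary-point style lemma adapted to the level-set equation $V=-H+\lambda(t)$ is needed to propagate a strict inequality of inward normal derivatives across the hyperplane. This is the main obstacle, and it is precisely where the hypothesis $\overline{B_\rho}\subset\Omega_t$ together with $s>\rho$ is essential: the fixed inner ball lies strictly on the negative side of every admissible hyperplane, so no degeneracy of $u$ can occur near $\Pi_\nu(s)\cap\partial\Omega_t$, and the comparison remains strictly nondegenerate throughout the interval $I$. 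Once the hyperplane contact is ruled out, the conclusion $\widetilde{\Omega}_t\subset\Omega_t\cap\Pi^-$ holds for every $\nu\in S^{n-1}$ and every $s>\rho$, which is exactly the $\rho$-reflection property of $\Omega_t$ on $I$.
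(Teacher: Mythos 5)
First, a structural point: the paper itself does not prove this statement -- it is imported verbatim from \cite[Theorem 3.6]{KimKwo18} -- so there is no in-paper argument to compare against; judged on its own terms, your outline follows the expected moving-planes strategy of the cited reference, but it leaves the step that constitutes the actual content of the theorem unproved. The comparison step in the half-space is not as routine as you present it. The functions $u$ and $u^s(\cdot,t)=u(\Psi(\cdot),t)$ are indicator-valued, so any comparison principle in this framework (Theorem~\ref{thm:comp}, or the CGG comparison it invokes) requires the ordering of the semicontinuous envelopes, $(u^s)^*\le u_*$, on the parabolic boundary of $\Pi_\nu^-(s)\times[0,t]$. On the reflecting hyperplane the two functions merely coincide, and at every point of $\partial\Omega_t\cap\Pi_\nu(s)$ one has $(u^s)^*=1>-1=u_*$, so the hypothesis of the comparison theorem fails at all times; ``equal lateral boundary data'' is not an admissible ordering for discontinuous viscosity solutions, and since the two evolutions agree on $\Pi_\nu(s)$ identically in time you can never strictly separate them to start a barrier argument. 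For the same reason your interior first-contact step (``comparing $u$ with $u^s$ translated by a small constant'') is not routine: there is no strong maximum principle for the level-set flow at this level of generality, and translating an indicator by a constant does not produce a strict sub/supersolution pair.

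The second, and more serious, gap is your treatment of contact on the hyperplane. A Serrin/Hopf boundary-point lemma ``adapted to $V=-H+\lambda(t)$'' is not available here: $\partial\Omega_t$ is at best Lipschitz, $u$ has no normal derivatives to compare, and the degenerate level-set operator does not admit a Hopf lemma for discontinuous solutions. Moreover, the inclusion $\overline{B_\rho}\subset\Omega_t$ together with $s>\rho$ yields star-shapedness and interior/exterior cone conditions (Lemma~\ref{lem:star}, Lemma~\ref{lem:rho}), but it does not give any nondegeneracy of $|Du|$ near $\Pi_\nu(s)\cap\partial\Omega_t$; the assertion that ``no degeneracy of $u$ can occur'' there is unsupported. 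The proof in \cite{KimKwo18} has to exclude hyperplane contact by different means -- exploiting the full family of admissible directions $\nu$ and heights $s>\rho$ together with the geometric and continuity-in-time estimates, in an open/closed (first-failure-time) argument over the maximal interval $I$ -- precisely the part your sketch gestures at (``no new first contact can develop before $t_0$'') but does not carry out. As written, the proposal identifies the right overall strategy while leaving the theorem's hard core, namely a rigorous reflection comparison across the hyperplane for indicator-valued viscosity solutions, as a black box.
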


Lemma~\ref{lem:hol} and Theorem~\ref{lem:brho} imply the following lemma.

\begin{lem}
\label{lem:ss}
(Short-time star-shapedness) 
For $r>r_0>0$ and $0<R<R_0$, suppose that $\overline{B}_{(1+\beta)\rho} \subset \Omega_0$ and $\oz \in S_{r,R}$ for $r = \rho(\beta^2 + 2\beta)$. Then, for all $t \in [0,t_1]$, it holds that for some $\hat{r} > r_0$ and $\hat{R} < R_0$
\begin{align}
\label{eqn:ss2}
\ot \in S_{\hat{r},\hat{R}}.
\end{align}
where
\begin{align}
\label{eqn:t1}
t_1=t_1(r,R,\|\gd(|\Xi_t|)\|_{L^\infty ([0,T])}) := \frac{1}{2} \left(\min \left\{ \frac{ \sqrt{r^2 + \rho^2} - \sqrt{r_0^2 + \rho^2}}{\M_1}, \frac{R_0 - R}{\M_1} \right\}\right)^2
\end{align}
Here, $\M_1$ is given in Lemma~\ref{lem:hol}.
\end{lem}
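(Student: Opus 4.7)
The plan is to combine the H\"{o}lder-in-time estimate of Lemma~\ref{lem:hol} with the $\rho$-reflection preservation from Theorem~\ref{lem:brho} and the inclusion-to-star-shape formula $r_1 = \rho\sqrt{\beta^2+2\beta}$ from \cite{KimKwo18} (the identity used in Proposition~\ref{thm:geo} where a ball $\overline{B}_{(1+\beta)\rho}\subset \Omega$ together with $\rho$-reflection forced $\Omega \in S_{r_1}$). The factor $1/\sqrt{2}$ hidden inside the definition of $t_1$ is exactly what gives strict inequalities $\hat r > r_0$ and $\hat R < R_0$.

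First, I would apply Lemma~\ref{lem:hol} to get $\sup_{x\in \pot} d(x,\poz) \leq \M_1 |t|^{1/2}$ for $t\in[0,t_1]$. From the assumption $\oz \subset B_R$ this yields
\[
\ot \subset B_{R+\M_1 \sqrt{t}},
\]
and since $\M_1\sqrt{t_1} \leq \tfrac{1}{\sqrt{2}}(R_0 - R) < R_0 - R$ we can take $\hat R := R + \M_1\sqrt{t_1} < R_0$. Similarly, from $\overline{B}_{(1+\beta)\rho}\subset \oz$ I get
\[
\overline{B}_{(1+\beta)\rho - \M_1\sqrt{t}} \subset \ot \qquad \text{for all } t\in[0,t_1].
\]
Using the identity $(1+\beta)\rho = \sqrt{r^2+\rho^2}$ (which is just the rewriting of $r=\rho\sqrt{\beta^2+2\beta}$), the bound on $t_1$ gives
\[
(1+\beta)\rho - \M_1\sqrt{t_1} \;\geq\; \sqrt{r^2+\rho^2} - \tfrac{1}{\sqrt{2}}\bigl(\sqrt{r^2+\rho^2} - \sqrt{r_0^2+\rho^2}\bigr) \;>\; \sqrt{r_0^2+\rho^2} \;>\; \rho.
\]
In particular $\overline{B_\rho} \subset \ot$ throughout $[0,t_1]$, so Theorem~\ref{lem:brho} applies and $\ot$ satisfies $\rho$-reflection on $[0,t_1]$.

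Next, for each $t\in[0,t_1]$ define $\hat\beta(t)\geq 0$ by $(1+\hat\beta(t))\rho := (1+\beta)\rho - \M_1\sqrt{t}$. The inequality above says $(1+\hat\beta(t))\rho > \sqrt{r_0^2+\rho^2}$, i.e., $\hat r(t):=\rho\sqrt{\hat\beta(t)^2+2\hat\beta(t)} > r_0$. Combining the $\rho$-reflection property just obtained with the inclusion $\overline{B}_{(1+\hat\beta(t))\rho}\subset \ot$, the argument behind equation (3.11) of \cite{KimKwo18} (the same step invoked in the proof of Proposition~\ref{thm:geo}) yields $\ot \in S_{\hat r(t)}$. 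Since $\hat r(t)$ is monotone decreasing in $t$ and $\hat R(t):= R+\M_1\sqrt{t}$ is monotone increasing, taking $\hat r := \hat r(t_1) > r_0$ and $\hat R := \hat R(t_1) < R_0$ gives \eqref{eqn:ss2} uniformly on $[0,t_1]$.

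There is no real obstacle here beyond tracking constants; the main point to verify carefully is the algebraic identity $(1+\beta)\rho=\sqrt{r^2+\rho^2}$ which makes the two quantities inside the $\min$ defining $t_1$ have the correct geometric meaning, and the factor $\tfrac{1}{2}$ in $t_1$ which converts the weak inequalities $\hat r\geq r_0,\hat R\leq R_0$ into the strict ones required by \eqref{eqn:ss2}.
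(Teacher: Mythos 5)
Your proposal is correct and follows essentially the same route as the paper: Lemma~\ref{lem:hol} gives the two inclusions $\overline{B}_{(1+\beta)\rho-\M_1\sqrt{t}}\subset\ot\subset B_{R+\M_1\sqrt{t}}$, Theorem~\ref{lem:brho} then yields $\rho$-reflection on $[0,t_1]$, and the conversion of the inscribed-ball radius into a star-shapedness radius via $(1+\hat\beta)\rho=\sqrt{\hat r^2+\rho^2}$ is exactly the paper's use of \eqref{eqn:ra} (Lemma~\ref{lem:rho}). Your write-up is in fact slightly more careful, since you verify explicitly that $(1+\beta)\rho-\M_1\sqrt{t_1}>\rho$ before invoking Theorem~\ref{lem:brho} and you state the final inequality $\hat r>r_0$ with the correct orientation, which the paper's displayed inequality has reversed (an apparent typo).
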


\begin{proof}
From Lemma~\ref{lem:hol}, it holds that in $[0,t_1]$
\begin{align}
\label{eqn:ss1}
\overline{B}_{(1+\beta)\rho - \M_1 t_1^{\frac{1}{2}}} \subset \Omega_t \subset B_{R + \M_1 t_1^{\frac{1}{2}}} 
\end{align}
Theorem~\ref{lem:brho} implies that $\Omega_t$ satisfies $\rho$-\textit{reflection} for all $t \in [0,t_1]$

\medskip

By \eqref{eqn:t1}, it holds that
\begin{align}
R + \M_1 t_1^{\frac{1}{2}} < R_0 \hbox { and } r_0 > \left( ((1+\beta)\rho - \M_1 t_1^{\frac{1}{2}})^2 - \rho^2 \right )^{\frac{1}{2}}
\end{align}
From \eqref{eqn:ra}, we conclude.
\end{proof}

Recall definition of energy solutions and comparison principle from \cite{KimKwo18}.

\begin{DEF}
\label{def:ene}
\cite[Definition 5.2]{KimKwo18}
Let $(\Xi_t)_{t \geq 0}$ be \textit{a energy solution} if
there exists a sequence $h_k \rightarrow 0$ such that
\begin{equation*}
d_H(\Xi_t, E_{t}(h_k)) \rightarrow 0
\end{equation*}
Here, $E_t=E_t(h)$ is given in \eqref{eqn:mmov}.
\end{DEF}

\begin{lem}\cite[Proposition~6.1]{KimKwo18}
\label{lem:bar}
Suppose that $\ot \in S_{r,R}$ in $[0,T]$ for some $r>r_0$ and $R<R_0$.
\begin{align}
\hbox{ If } \Omega_0  \subset \subset \Xi_0, \hbox{ then } \Omega_t \subset\subset \Xi_t \hbox{ in }  [0,T]. \hbox{ Also, if }    \Xi_0 \subset \subset \Omega_0, \hbox{ then }\Xi_t \subset \subset \Omega_t \hbox{ in }  [0,T].
\end{align}
\end{lem}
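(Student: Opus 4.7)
\textbf{Proof plan for Lemma~\ref{lem:bar}.}

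The plan is to approximate the energy solution $\Xi_t$ by its defining discrete gradient-flow scheme $E_t(h_k)$ (Definition~\ref{def:mmov}), prove the inclusion with each $E_t(h_k)$ by a step-by-step barrier argument, and pass to the limit $k\to\infty$. We only treat the first implication; the second is symmetric.

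\emph{Step 1 (Approximation and uniform convergence of the forcing).} By Definition~\ref{def:ene}, extract $h_k \downarrow 0$ with $d_H(E_t(h_k),\Xi_t) \to 0$ uniformly on $[0,T]$. Since $\Xi_t, E_t(h_k) \in S_{r_0,R_0}$, uniform Hausdorff convergence upgrades to convergence of volumes (Lemma~\ref{lem-cpt}), so $\lambda_k(t) := \gd(|E_t(h_k)|) \to \lambda(t) := \gd(|\Xi_t|)$ uniformly on $[0,T]$; equivalently $\int_0^t\lambda_k \to \int_0^t\lambda$ locally uniformly.

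\emph{Step 2 (Auxiliary viscosity flow and stability).} For each $k$, let $\widetilde\Omega_t^k$ be the viscosity solution of $V=-H+\lambda_k(t)$ starting from $\Omega_0$, noting that $\lambda_k$ is constant on each interval $[jh_k,(j+1)h_k)$ by construction of $E_t(h_k)$. By the stability result Theorem~\ref{thm:stf} together with Lemma~\ref{lem:sts}, $\widetilde\Omega_t^k \to \Omega_t$ in Hausdorff distance as $k\to\infty$. Moreover, since $\Omega_t$ is assumed to lie in $S_{r,R}$ with $r>r_0$ and $R<R_0$, by choosing $k$ large we may assume $\widetilde\Omega_t^k \in S_{r',R'}$ for some fixed $r_0<r'<r$ and $R<R'<R_0$ on $[0,T]$.

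\emph{Step 3 (Inductive comparison with the discrete scheme).} The heart of the argument is to show, by induction on $j$, that $\widetilde\Omega_{jh_k}^k \subset\subset E_{jh_k}(h_k)$ for all $j$ with $jh_k \leq T$, provided $k$ is large. The base case follows from the strict inclusion $\Omega_0\subset\subset\Xi_0$ and Step~1. For the inductive step, fix the constant forcing $\lambda_k(jh_k)$ and compare on $[jh_k,(j+1)h_k]$. Suppose at a first-touching time $t^*\in (jh_k,(j+1)h_k]$ the boundary $\partial\widetilde\Omega_{t^*}^k$ meets $\partial E_{(j+1)h_k}(h_k)$ (extended as a static set on this interval after the minimization) at an interior point $x_0$. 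Since $\widetilde\Omega_{t^*}^k\in S_{r',R'}$ is strictly interior to the admissible class $S_{r_0,R_0}$, the minimizer $E_{(j+1)h_k}(h_k)$ satisfies its unconstrained Euler–Lagrange identity near $x_0$, yielding the one-step relation $H \lesssim \lambda_k(jh_k) + h_k^{-1}\sd(\cdot,\partial E_{jh_k}(h_k))$ on its boundary. Using this as a smooth test surface against $\widetilde\Omega_{t^*}^k$ and applying the viscosity subsolution condition on $\widetilde\Omega_t^k$ produces the strict inequality needed to forbid the contact (the argument mirrors the standard moving-barrier construction used for MCF-type minimizing movements). Combined with the continuous dependence of $E_{(j+1)h_k}(h_k)$ on $E_{jh_k}(h_k)$ for small $h_k$, this closes the induction.

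\emph{Step 4 (Passage to the limit).} Taking $k\to\infty$ in the inductive inclusion and using Steps~1--2, we obtain $\Omega_t \subset \Xi_t$ on $[0,T]$. To upgrade $\subset$ to $\subset\subset$, set $\eta:=\mathrm{dist}(\partial\Omega_0,\partial\Xi_0)>0$ and rerun the argument with $\Omega_0$ replaced by the $\eta/2$-enlargement of $\Omega_0$; the Hausdorff equicontinuity and star-shapedness of the viscosity flow (Proposition~\ref{lem:ht} type estimates) propagate the margin quantitatively to $[0,T]$.

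\emph{Main obstacle.} The delicate point is the one-step barrier comparison in Step~3: at a would-be contact point, one must extract from the minimality of $E_{(j+1)h_k}(h_k)$ a usable viscosity-style supersolution property, despite the admissible set being constrained to $S_{r_0,R_0}$. This is exactly where the hypothesis $r>r_0$, $R<R_0$ is used in a crucial way, since it guarantees that both $\widetilde\Omega_t^k$ and the competitor perturbations of $E_{(j+1)h_k}(h_k)$ near the contact stay in the \emph{interior} of the admissible class, so the constraint imposes no Lagrange multiplier of its own and the Euler-Lagrange equation for the minimizer reduces to the curvature form used in the touching-point argument.
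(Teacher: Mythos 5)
Note first that this paper does not prove Lemma~\ref{lem:bar} at all: it is imported verbatim from \cite[Proposition~6.1]{KimKwo18}, so your outline has to stand on its own, and as written its core step has genuine gaps. The main problem is Step 3. (a) The hypothesis $r>r_0$, $R<R_0$ constrains $\Omega_t$ (and, after your Step 2, $\widetilde\Omega^k_t$), \emph{not} the discrete minimizers $E_{jh_k}(h_k)$; nothing prevents the star-shapedness constraint from being active for $E_{(j+1)h_k}$ at or near the contact point, and local perturbations of a set in $S_{r_0,R_0}$, in either direction, generally leave $S_{r_0,R_0}$ --- this is precisely why Section~\ref{sec:l2} must build the global variations of Lemma~\ref{lem:bdd} and Lemma~\ref{lem:dil} rather than local ones. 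So the claim in your ``Main obstacle'' paragraph that the constraint contributes no multiplier and the unconstrained Euler--Lagrange identity holds near the contact is unjustified. (b) Even ignoring the constraint, using $\partial E_{(j+1)h_k}$ as a classical test surface at a first-touch time requires $C^2$ regularity of the minimizer's boundary there, which you have not established, and the static minimizer is not a space--time barrier for the parabolic problem on $[jh_k,(j+1)h_k]$: at regular points its Euler--Lagrange relation is $H+\tfrac{1}{h_k}\sd(\cdot,\partial E_{jh_k})=\gd(|E_{(j+1)h_k}|)$, where neither the sign nor the size of the $\tfrac{1}{h_k}\sd$ term is controlled at the contact point, and the forcing that appears is $\gd(|E_{(j+1)h_k}|)$, not your $\lambda_k(jh_k)$. (c) The induction runs over $O(T/h_k)$ steps, and you never explain how the strict-inclusion margin survives; you cannot appeal to order preservation of the one-step operator, since the volume penalization makes the scheme non-monotone (the paper itself emphasizes that comparison fails for \eqref{modeld}). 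The standard remedy is to compare the discrete flow with strict smooth super/subflows whose gap evolves at a definite rate, which is essentially the content of the cited result you are trying to reprove.

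A secondary gap is in Step 2: Hausdorff closeness of $\widetilde\Omega^k_t$ to $\Omega_t\in S_{r,R}$ does not imply $\widetilde\Omega^k_t\in S_{r',R'}$, since star-shapedness is not stable under Hausdorff perturbations; to get this you would have to rerun the geometric preservation results ($\rho$-reflection, as in Theorem~\ref{lem:brho} and Lemma~\ref{lem:ss}) for each approximate forcing $\lambda_k$, with bounds uniform in $k$. The overall strategy (approximate $\Xi_t$ by the scheme, compare step by step, pass to the limit) is reasonable in spirit, but filling the holes above amounts to redoing the consistency and barrier analysis of \cite{KimKwo18}, which is exactly what the paper avoids by citing \cite[Proposition~6.1]{KimKwo18}.
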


\medskip

\noindent {\bf Proof of Proposition~\ref{prop:nom}}

\medskip

Let $\ot^{\e,+}$ and $\ot^{\e,-}$ be viscosity solutions of $V = -H + \gd(|\Xi_t|)$ starting from $\oz^{\e,-} := (1-\e)\oz$ and $\oz^{\e,+}:= (1+\e)\oz$, respectively. Note that
\begin{align}
\label{eqn:nom10}
\overline{B}_{(1+\beta_1)\rho} \subset \oz \in S_{r_1,R_1}
\end{align}
where $r_1 (= \rho(\beta_1^2 + 2\beta_1)^{\frac{1}{2}})$ and $\beta_1$ are given in from \eqref{eqn:r1} and $R_1$ from Proposition~\ref{thm:geo}.
From \eqref{eqn:nom10}, there exist $\e_0>0$, $\beta \in (0, \beta_1) $ and $R \in (R_1,R_0)$ such that 
\begin{align}
\label{eqn:nom11}
\overline{B}_{(1+\beta)\rho}  \subset \oz^{\e,\pm}  \in S_{r,R} \hbox{ for all } \e \in [0,\e_0], \hbox{ and } r :=  \rho(\beta^2 + 2\beta)^{\frac{1}{2}}>r_0
\end{align}
Here, $r_0$ and $R_0$ is given in \eqref{eqn:r0}.

\medskip

Let us show that $\Xi_t = \ot$ in $[0,t_1]$ for $t_1=t_1(r,R,K_1)$ given in \eqref{eqn:t1} and $K_1 := \frac{1}{\delta}\max\{ 1,  |B_{R_0}|\}$. As $|\gd(|\Xi_t|) | \leq K_1$, we can apply Lemma~\ref{lem:ss} combining with \eqref{eqn:nom11} and conclude that for some $\hat{r} > r_0$ and $\hat{R} < R_0$
\begin{align}
\ot^{\e,\pm} \in S_{\hat{r},\hat{R}}  \hbox{ in } [0,t_1].
\end{align}
By Lemma~\ref{lem:bar}, we conclude that 
\begin{align}
\ot^{\e,-} \subset \Xi_t \subset \ot^{\e,+}.
\end{align}
Note that $\ot^{\e,+}$ and $\ot^{\e,-}$ converges to $\ot$ as $\e \to 0$ from the uniqueness in \cite[Theorem 4.3]{KimKwo18}. We conclude that $\Xi_t = \ot$ in $[0,t_1]$. As the proof of Theorem 6.5 in \cite{KimKwo18}, we can iterate this step to conclude.
\hfill$\Box$

\medskip

\section{Geometric Properties}
\label{ap:geo}

In this section, we consider geometric properties of $\rr$ and $S_{r}$. First, let us recall a local property of $S_{r}$ from \cite{KimKwo18}.

\begin{lem}\cite[Lemma 3.2]{KimKwo18}
\label{lem:starloc}
For a continuously differentiable and bounded function $\phi : \mathbb{R}^n \rightarrow \mathbb{R}$, let us denote the positive set of $\phi$ by $\Omega(\phi)$. Let us assume that $\Omega(\phi)$ contains $B_r(0)$ and $D\phi \neq 0$ on $\partial \Omega(\phi)$. Then 
the set $\Omega(\phi)$ is in $S_r$ if and only if 
$$x \cdot \vec{n}_x = x \cdot \left(- \frac{D\phi}{|D\phi|}(x)\right) \geq r  \hbox{ for all } x\in \partial\Omega(\phi), $$
where $\vec{n}_x$ denotes the outward normal of $\partial \Omega(\phi)$ at $x$.
\end{lem}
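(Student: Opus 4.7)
The plan is to split the equivalence into the two implications and verify each directly using first–order information on $\phi$ at boundary points. Throughout I use $\vec n_x = -D\phi(x)/|D\phi(x)|$, which is well-defined by the hypothesis $D\phi \neq 0$ on $\partial\Omega(\phi)$.

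For the forward implication, assume $\Omega(\phi) \in S_r$. Fix $x \in \partial\Omega(\phi)$ and any $y \in B_r(0)$. By the definition of star-shapedness with respect to $B_r(0)$, the segment $\{(1-s)x + sy : s \in [0,1]\}$ lies in $\overline{\Omega(\phi)}$. In particular, $\phi((1-s)x + sy) \geq 0$ for small $s > 0$, while $\phi(x) = 0$. Dividing by $s$ and letting $s \to 0^+$ yields $D\phi(x) \cdot (y-x) \geq 0$, equivalently $\vec n_x \cdot (y - x) \leq 0$, i.e.\ $y \cdot \vec n_x \leq x \cdot \vec n_x$. Taking the supremum over $y \in B_r(0)$ gives $r = \sup_{|y|<r} y \cdot \vec n_x \leq x \cdot \vec n_x$.

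For the backward implication, assume $x \cdot \vec n_x \geq r$ on $\partial\Omega(\phi)$. Fix $y \in B_r(0)$ and $x \in \overline{\Omega(\phi)}$, and consider the scalar function $f(t) := \phi((1-t)y + tx)$ on $[0,1]$. I need to show $f \geq 0$ on $[0,1]$; this gives that the segment $[y,x]$ lies in $\overline{\Omega(\phi)}$, which is the definition of star-shapedness with respect to $B_r(0)$. Note $f(0) = \phi(y) > 0$ since $B_r(0) \subset \Omega(\phi)$, and $f(1) \geq 0$. Suppose toward contradiction that $f(t^*) < 0$ for some $t^* \in (0,1)$. Then there exist $0 < t_1 < t_2 \leq 1$ with $f(t_1) = f(t_2) = 0$ and $f < 0$ on $(t_1, t_2)$, so $f'(t_1) \leq 0 \leq f'(t_2)$. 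Writing $z_i := (1-t_i)y + t_i x \in \partial\Omega(\phi)$, this reads $D\phi(z_i) \cdot (x-y) \leq 0$ at $t_1$ and $\geq 0$ at $t_2$, i.e.\ $(x - y) \cdot \vec n_{z_i} \geq 0$ at $t_1$ and $\leq 0$ at $t_2$.

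On the other hand, the hypothesis gives $z_i \cdot \vec n_{z_i} \geq r$, and $|y| < r$, so
\[
t_i (x-y) \cdot \vec n_{z_i} = (z_i - y) \cdot \vec n_{z_i} \geq r - |y| > 0.
\]
Since $t_i > 0$, this forces $(x-y) \cdot \vec n_{z_i} > 0$ at both $z_1$ and $z_2$, which contradicts the sign obtained at $z_2$. Hence $f \geq 0$ on $[0,1]$, completing the proof. The main obstacle is the backward direction: one has to rule out that the segment dips outside $\overline{\Omega(\phi)}$ and returns, and the key insight is that the inequality $z \cdot \vec n_z \geq r$ combined with $|y| < r$ forces the chord direction $(x-y)$ to always have a \emph{strictly} outward component at any boundary crossing of the form $z = (1-t)y + tx$ with $t > 0$, precluding any re-entry.
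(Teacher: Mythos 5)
The paper does not prove this lemma --- it is quoted verbatim from \cite[Lemma 3.2]{KimKwo18} --- so I can only judge your argument on its own terms. Your forward implication is correct, and your key computation in the backward implication (that $z\cdot\vec n_z\geq r$ together with $|y|<r$ forces $(x-y)\cdot\vec n_z\geq t^{-1}(r-|y|)>0$ at any point $z=y+t(x-y)$ of $\partial\Omega(\phi)$ with $t>0$) is exactly the right mechanism. However, as written the backward direction has two genuine gaps. First, your points $z_1,z_2$ are only known to satisfy $\phi(z_i)=0$, and $\{\phi=0\}$ can be strictly larger than $\partial\Omega(\phi)=\partial\{\phi>0\}$: with your choice of $t_1,t_2$ (endpoints of a maximal interval where $f<0$), the function $f$ may vanish identically on one side of $t_i$, in which case $z_i$ need not be a limit of points where $\phi>0$. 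At such a point the hypothesis $z\cdot\vec n_z\geq r$ does not apply, and $\vec n_{z_i}$ is not even defined since $D\phi$ is only assumed nonzero on $\partial\Omega(\phi)$. Since your contradiction is extracted at $z_2$, this is not cosmetic. Second, proving $f\geq0$ only places the segment in $\{\phi\geq0\}$, which is neither $\overline{\Omega(\phi)}$ (contrary to what you assert) nor the open set $\Omega(\phi)$ that Definition~\ref{def:star} actually requires the segment to lie in.

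Both gaps are repaired by the same adjustment: for $x\in\Omega(\phi)$ work with the open set $A:=\{t\in[0,1]:f(t)>0\}$, which contains $0$ and $1$, and suppose $K:=[0,1]\setminus A\neq\emptyset$. Put $t_2:=\max K$. Then $z_2=y+t_2(x-y)$ is a limit of segment points with $\phi>0$, so $\phi(z_2)\geq0$; combined with $t_2\notin A$ this gives $\phi(z_2)=0$ and $z_2\in\partial\Omega(\phi)$, so the hypothesis legitimately yields $(x-y)\cdot\vec n_{z_2}>0$, i.e.\ $f'(t_2)<0$, contradicting $f(t_2)=0$ and $f>0$ on $(t_2,1]$. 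This shows $A=[0,1]$, i.e.\ the segment lies in $\Omega(\phi)$ itself, which is what $S_r$ demands. With this modification your argument is complete; without it, the step ``$z_i\in\partial\Omega(\phi)$'' can fail.
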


Here are several properties of $\rr$ and $S_{r}$ from \cite{FelKim14}.

\begin{lem}
\label{lem:4rho}
\cite{FelKim14}
Suppose that $\Omega$ satisfies $\rho$\textit{-reflection}. Then, we have
\begin{equation*}
\sup_{x \in \partial \Omega} |x| - \inf_{x \in \partial \Omega} |x| \leq 4\rho.
\end{equation*}
\end{lem}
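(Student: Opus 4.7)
The plan is to derive the inequality directly from the $\rho$-reflection property by showing that, around a farthest boundary point, suitable reflections across hyperplanes tangent to $B_\rho(0)$ fill $\overline{\Omega}$ with an open ball centered at the origin. Set $R := \sup_{x \in \partial\Omega}|x|$ and $r := \inf_{x \in \partial\Omega}|x|$; the case $R \leq 2\rho$ is immediate, so I would assume $R > 2\rho$ and fix $x^{\ast} \in \partial\Omega$ with $|x^{\ast}| = R$.

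The main computation is to show that for any target $p \in \R^n$ with $|p| < R - 2\rho$, there is an admissible reflection hyperplane sending $x^{\ast}$ to $p$. I would pick
\[
\nu := \frac{x^{\ast} - p}{|x^{\ast} - p|}, \qquad s := \frac{R^2 - |p|^2}{2|x^{\ast} - p|},
\]
for which a direct check yields $\Psi_{\Pi_\nu(s)}(x^{\ast}) = p$ and $x^{\ast}\cdot\nu - s = |x^{\ast} - p|/2 > 0$. The key inequality to verify is $s > \rho$: since $|x^{\ast} - p| \leq R + |p|$ and $R - |p| > 2\rho$, we have
\[
R^2 - |p|^2 = (R - |p|)(R + |p|) > 2\rho(R + |p|) \geq 2\rho|x^{\ast} - p|,
\]
which rearranges to $s > \rho$, so $\Pi_\nu(s)$ is admissible in the sense of Definition~\ref{def:rho}. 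Approximating $x^{\ast}$ by a sequence $x_n \in \Omega$ with $x_n \cdot \nu > s$, the $\rho$-reflection property yields $\Psi_{\Pi_\nu(s)}(x_n) \in \Omega$, and passing to the limit gives $p = \Psi_{\Pi_\nu(s)}(x^{\ast}) \in \overline{\Omega}$. Since $p \in B_{R-2\rho}(0)$ was arbitrary, this proves $B_{R-2\rho}(0) \subset \overline{\Omega}$, from which $r \geq R - 2\rho$, and hence $R - r \leq 2\rho \leq 4\rho$, as claimed.

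The main obstacle I anticipate is the last step of passing from $B_{R-2\rho}(0) \subset \overline{\Omega}$ to the statement that no boundary point of $\Omega$ lies in the open ball $B_{R-2\rho}(0)$: a priori $\partial\Omega$ could thread into the interior of $\overline{\Omega}$. To close this topological gap I would invoke the fact that $\rho$-reflection forces $\Omega$ to be star-shaped with respect to $B_\rho(0)$, so the boundary is radially single-valued (compare Lemma~\ref{lem:starloc}) and cannot enter the interior of any ball already contained in $\overline{\Omega}$. The factor $4\rho$ in the statement (rather than the sharper $2\rho$ the argument naturally produces) provides convenient slack and matches the form cited from \cite{FelKim14}.
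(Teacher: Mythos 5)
The paper does not prove this lemma; it is quoted from \cite{FelKim14}, so your argument has to stand on its own, and in essence it does: the choice of the perpendicular bisector of $[p,x^{\ast}]$, the identities $\Psi_{\Pi_\nu(s)}(x^{\ast})=p$ and $x^{\ast}\cdot\nu-s=|x^{\ast}-p|/2>0$, and the admissibility estimate $s=(R^2-|p|^2)/(2|x^{\ast}-p|)>\rho$ (from $R-|p|>2\rho$ and $|x^{\ast}-p|\le R+|p|$) are all correct, and they even yield the sharper constant $2\rho$ in place of $4\rho$, which a translated ball with center at distance $\rho$ from the origin shows is optimal.

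The one genuine soft spot is exactly the one you flag: $B_{R-2\rho}(0)\subset\overline{\Omega}$ does not by itself exclude boundary points of small norm, and your patch is stated loosely. What $\rho$-reflection gives (Lemma~\ref{lem:rho}(1), not Lemma~\ref{lem:starloc}, which is only the $C^1$ level-set characterization) is $\Omega\in S_r$ with $r=(\inf_{\partial\Omega}|x|^2-\rho^2)^{1/2}$, not star-shapedness with respect to $B_\rho(0)$; this $r$ is strictly positive because $\overline{B_\rho(0)}$ is a compact subset of the open set $\Omega$, and star-shapedness with respect to a ball of \emph{positive} radius is what rules out the boundary threading into $B_{R-2\rho}(0)$ (via the interior/exterior cones of Lemma~\ref{lem:star}; star-shapedness with respect to a point alone would not suffice, as a slit domain shows). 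A cleaner route avoids this entirely: instead of reflecting the boundary point $x^{\ast}$ and passing to the limit, reflect an interior point. Given $|p|<R-2\rho$, pick $x\in\Omega$ with $|x|$ so close to $R$ that $|x|-|p|>2\rho$ (possible since $x^{\ast}\in\partial\Omega$), and take the perpendicular bisector of $[p,x]$; the same algebra gives $s>\rho$ and $x\cdot\nu>s$, so the $\rho$-reflection property applied to $x\in\Omega\cap\Pi^+_\nu(s)$ yields $p=\Psi_{\Pi_\nu(s)}(x)\in\Omega$ directly. Hence $B_{R-2\rho}(0)\subset\Omega$, and since $\partial\Omega$ is disjoint from the open set $\Omega$, every boundary point has norm at least $R-2\rho$, i.e.\ $R-r\le 2\rho\le 4\rho$, with no appeal to star-shapedness at all.
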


\begin{lem}\cite[Lemma 3, 9, 10]{FelKim14} \label{lem:star}
For a bounded domain $\Omega$ containing $B_r(0)$, the following are equivalent:

(i) $\Omega \in S_r$.

(ii)
For all $x\in \partial \Omega$, there is an interior cone to $\Omega$:
\begin{align}
\label{eqn:ic}
IC(x,r):=\left((x+C(-x,\theta_x)) \cap C(x,\frac{\pi}{2} - \theta_x ) \right) \cup B_r(0) \subset \Omega 
\end{align}
where $$\theta_x := \arcsin \frac{r}{|x|} \in \left[0,\frac{\pi}{2}\right] \text{ and }C(x,\theta):=\{ y \mid \langle x,y \rangle \geq \cos \theta |x| |y| \}.$$

(iii)
There exists $\epsilon>0$ such that for all $x\in \partial \Omega$, there is an exterior cone to $\Omega$:
\begin{align}
\label{eqn:ec}
EC(x,r):=\left(x+C(x,\theta_x)\right) \cap B_\epsilon(x) \subset \Omega^c \text{ where } \theta_x = \arcsin \frac{r}{|x|}. 
\end{align}
\end{lem}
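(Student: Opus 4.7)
The plan is to verify Definition~\ref{def:star} directly for the value $r = 10$: I will show that $[y,x] \subset \Omega$ for every $y \in B_{10}(0)$ and every $x \in \Omega$. The containment $B_{10}(0) \subset \Omega_1 \subset \Omega$ is immediate. Writing $\Omega = \Omega_1 \cup \bigcup_i B_i$ with $B_i := r_i \mathcal{D} + (40, x_i)$, the argument will divide into two cases according to whether $x$ lies in $\Omega_1$ or in some bump.

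If $x \in \Omega_1 = [-40,40]^n$, convexity of $\Omega_1$ together with $y \in B_{10}(0) \subset \Omega_1$ immediately yields $[y,x] \subset \Omega_1 \subset \Omega$. The interesting case is $x$ lying in a bump $B_i$ with $x_1 > 40$. A crucial preliminary observation is that each $B_i$ is itself \emph{convex}: since $\phi$ is concave on the convex set $B_1^{n-1}$, the set $\mathcal{D} = \{(u, x') : x' \in B_1^{n-1},\ 0 \leq u \leq \phi(x')\}$ is convex, and $B_i$ is a scaled translate. Parametrizing $\gamma(\mu) := (1-\mu)y + \mu x$ and letting $\mu_0 \in (0,1)$ be the unique value with $\gamma(\mu_0)_1 = 40$, for $\mu \in [0,\mu_0]$ a coordinate-wise bound (using $|y_j| \leq r$ and $|x_j| \leq |x_i| + r_i \leq 11$ for $j \geq 2$) will place $\gamma(\mu)$ inside $\Omega_1$. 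For $\mu \in [\mu_0,1]$, convexity of $B_i$ will reduce the question to showing that the entry point $\gamma(\mu_0)$ lies in $B_i$, since the other endpoint $\gamma(1) = x$ is already in $B_i$.

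The main obstacle is this entry-point verification. Writing $x = (40 + u, x_i + r_i w)$ with $w \in \overline{B_1^{n-1}}$ and $u \leq r_i \phi(w)$, and using $1 - \mu_0 = u/(x_1 - y_1)$, a direct expansion gives
\begin{align*}
w_0 := \tfrac{1}{r_i}\bigl((1-\mu_0)(y' - x_i) + \mu_0 r_i w\bigr), \qquad |w_0| \leq \frac{u}{r_i(x_1 - y_1)}\,|y' - x_i| + |w|.
\end{align*}
Naive bounds that ignore the vanishing of $\phi$ at $\partial B_1^{n-1}$ will fail near $|w| = 1$. The essential input is the Lipschitz condition $|D\phi| \leq 1$ from \eqref{ex:tm20} combined with $\phi \equiv 0$ on $\partial B_1^{n-1}$, which gives the linear bound $\phi(w) \leq 1 - |w|$ and hence $u \leq r_i(1 - |w|)$. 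Together with $|y' - x_i| \leq r + 10$ and $x_1 - y_1 \geq 40 - r$, the estimate simplifies to $|w_0| \leq (1-|w|)(r+10)/(40-r) + |w|$; algebraic rearrangement reduces $|w_0| \leq 1$ to $2r(1-|w|) \leq 30(1-|w|)$, which holds for all $|w| \leq 1$ provided $r \leq 15$. Choosing $r = 10$ therefore closes the bump case via convexity of $B_i$ and completes the proof.
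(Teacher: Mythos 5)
Your proposal does not address the statement of Lemma~\ref{lem:star} at all. That lemma is a \emph{general} equivalence, for an arbitrary bounded domain $\Omega$ containing $B_r(0)$, between (i) membership in $S_r$, (ii) the interior cone condition $IC(x,r)\subset\Omega$ at every boundary point, and (iii) the exterior cone condition $EC(x,r)\subset\Omega^c$ at every boundary point. A proof would have to establish the implications among (i), (ii) and (iii) — e.g.\ showing that if every segment $[y,x]$ with $y\in B_r(0)$, $x\in\Omega$ lies in $\Omega$, then the union of such segments over $y\in B_r(0)$ for a fixed boundary point $x$ contains the cone $IC(x,r)$, and conversely that the cone conditions force the segment property. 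Nothing in your argument touches the cones $C(x,\theta_x)$, the angle $\theta_x=\arcsin(r/|x|)$, or the equivalence structure.

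What you have actually written is a proof of a \emph{different} statement, namely Lemma~\ref{lem:tm}: that the specific set $\Omega=\Omega_1\cup\bigcup_i\bigl(r_i\cD+(40,x_i)\bigr)$ constructed in Example~\ref{ex:tm2} is star-shaped with respect to some ball $B_r(0)$. Your decomposition into the cube $\Omega_1=[-40,40]^n$ and the convex bumps $B_i$, the entry-point computation at the face $x_1=40$, and the use of $|D\phi|\le 1$ together with $\phi=0$ on $\partial B_1^{n-1}$ are all tied to that one example and cannot yield a statement about arbitrary bounded domains. (As an aside, the paper's own route to Lemma~\ref{lem:tm} goes through the smooth criterion $x\cdot\vec n_x\ge r$ of Lemma~\ref{lem:starloc} rather than a direct segment verification, so even for that lemma your approach is different; but the first-order issue is that you have proved the wrong lemma.) Lemma~\ref{lem:star} itself is quoted from \cite{FelKim14}, and any self-contained proof of it must work at the level of general domains and the cone geometry in \eqref{eqn:ic}--\eqref{eqn:ec}.
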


Lemma~\ref{lem:den} can be shown by Lemma~\ref{lem:star}

\medskip

\noindent {\bf Proof of Lemma~\ref{lem:den}}

\medskip

From Lemma~\ref{lem:star}, it holds that for all $x \in \partial E$, 
\begin{align}
IC(x,r) \subset E \hbox{ and } EC(x,r) \subset E^c
\end{align}
where $IC$ is an interior cone given in \eqref{eqn:ic}, and $EC$ is an exterior cone given in \eqref{eqn:ec}. Note that as $|x| \leq R$, the angle of both the interior cone and exterior cone, $\theta_x$, is bounded from below as follows,
\begin{align}
\theta_x := \arcsin \frac{r}{|x|} \geq \arcsin \frac{r}{R}
\end{align}
Thus, for $\eta_1(r,R):= |IC(Re_1,r) \cap B_\e(Re_1)|$, it holds that for $\e \in (0,r)$
\begin{align}
\eta_1 \e^{n} \leq |IC(x,r) \cap B_\e(x)| \leq |E \cap B_\e(x)|  
\end{align}
Here, $e_1$ is a unit vector in the positive $x_1$ direction.
Similarly, it holds that
\begin{align}
|B_\e(x) \setminus E| \geq |B_\e(x) \cap EC(x,r)| \geq \eta_1 \e^{n}.
\end{align}
Here, $w_n$ is a volume of a unit ball in $\R^n$.

\medskip

As $E \in S_{r,R}$, there exists $\e_0 = \e(r,R) < r$ such that for all $\e \in (0,\e_0)$
\begin{align}
B_\e(x) \cap \partial E = (U, f(U)) 
\end{align}
up to rotation for some Lipschitz function $f = f_{x,\e} : U \subset B_\e^{n-1}(x) \rightarrow \R$.
Note that as $E \in S_{r,R}$, the Lipschitz constant of $f$ is uniformly bounded by some constant $\L = \L(r,R)$.

\medskip

From Theorem 9.1 in \cite{Mag12},
\begin{align}
\H^{n-1}(B_\e(x) \cap \partial E) = \int_{U} \sqrt{ 1+ | \grad f |^2 } dx \leq |U| \sqrt{ 1+ \L^2} \leq n w_n \e^{n-1}   \sqrt{ 1+ \L^2}
\end{align}
Thus, \eqref{eqn:2den} holds with $\eta_2(r,R):=n w_n   \sqrt{ 1+ \L^2}$. On the other hand, from the isoperimetric inequality in \cite[Proposition 12.37]{Mag12} and \eqref{eqn:1den}, we get the lower bound of \eqref{eqn:2den}.
\hfill$\Box$

\medskip

\begin{lem}
\label{lem:rho}
\cite[Lemma 10, 24]{FelKim14}
\begin{enumerate}
\item
Suppose that $\Omega$ satisfies $\rho$\textit{-reflection}.  
Moreover, $\Omega \in S_r$  with  
\begin{align}
\label{eqn:ra}
r=(\inf_{x\in \partial \Omega} |x|^2 - \rho^2 )^{1/2}. 
\end{align}

\item
Suppose that $\Omega$ is in $S_{r,R}$. If  there exists $\rho>0$ such that $\overline{B_\rho(0)} \subset \Omega$ and $\rho^2 \geq 5(R^2 - r^2)$, then $\Omega$ satisfies $\rho$-reflection.
\end{enumerate}
\end{lem}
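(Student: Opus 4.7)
My plan is to handle the two directions separately, in each case connecting the moving-planes language of $\rho$-reflection with the cone characterizations of $S_r$ and $S_{r,R}$ provided by Lemma~\ref{lem:starloc} and Lemma~\ref{lem:star}.

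For Part~(1), I would use Lemma~\ref{lem:starloc} to reduce $\Omega \in S_r$ to checking the inclusion $\overline{B_r(0)} \subset \Omega$ together with the pointwise normal inequality $x \cdot \vec{n}_x \ge r$ at every smooth point $x \in \partial \Omega$. The inclusion is immediate because $r \le m := \inf_{\partial \Omega}|x|$ and $\overline{B_m(0)} \subset \overline{\Omega}$. For the normal inequality, fix $x_0 \in \partial \Omega$ with outward normal $\vec{n}$, set $a := x_0 \cdot \vec{n}$, and apply the moving-planes consequence of $\rho$-reflection in the limit $s \downarrow \rho$: reflection across $\Pi_{\vec{n}}(\rho)$ sends $\overline{\Omega} \cap \Pi^+_{\vec{n}}(\rho)$ into $\overline{\Omega}$. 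Applying this reflection at a minimum-radius boundary point $p \in \partial \Omega$ with $|p| = m$ and comparing the squared distance $|p'|^2 = m^2 + 4\rho(\rho - p\cdot\vec{n})$ of its image with the tangent-plane position $a$ at $x_0$ yields the quantitative relation $a^2 + \rho^2 \ge m^2$, i.e.\ $a \ge \sqrt{m^2 - \rho^2} = r$.

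For Part~(2), I would verify both requirements of the $\rho$-reflection definition. The first, $\overline{B_\rho(0)} \subset \Omega$, is part of the hypothesis. For the second, fix $\nu \in S^{n-1}$ and $s > \rho$, take $x \in \Omega \cap \Pi^+_\nu(s)$, and aim to show that $x' := x - 2(x\cdot\nu - s)\nu$ lies in $\overline \Omega$. The strategy is to locate $x'$ inside some interior cone $IC(\tilde x, r) \subset \Omega$ in the sense of Lemma~\ref{lem:star}(ii), where $\tilde x \in \partial \Omega$ is chosen as the first exit from $\overline\Omega$ along the ray from $0$ through $x'$. The containment $x' \in IC(\tilde x, r)$ reduces to an angular inequality: the direction from $\tilde x$ to $x'$ must lie inside the cone of opening angle $\theta_{\tilde x} = \arcsin(r/|\tilde x|)$. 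Using $|x|, |x'| \le R$ and the perpendicular displacement $|x - x'| = 2(x\cdot\nu - s) \le 2\sqrt{R^2 - \rho^2}$, one bounds the transverse spread of the reflected point against the base $B_\rho$; the hypothesis $\rho^2 \ge 5(R^2 - r^2)$ is precisely what makes the resulting angular inequality hold in the extremal configuration.

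The main obstacle is the sharp quantitative estimate in Part~(2): one must balance the worst-case transverse displacement of $x'$ (controlled by $R - \rho$) against the guaranteed opening angle of the interior cone (controlled by $r/R$). The factor $5$ emerges as the worst-case ratio in this balance, from an Alexandrov-type moving-plane computation analogous to those in the literature of radial symmetry. Part~(1) is conceptually simpler, since the extremal tangent-plane reflection directly produces the identity $a^2 + \rho^2 \ge m^2$ once a closest boundary point $p$ is identified and used as the test point.
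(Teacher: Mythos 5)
The paper itself offers no proof of this lemma---it is imported wholesale from \cite[Lemmas 10 and 24]{FelKim14}---so I am assessing your sketch on its own terms, and in both parts the decisive step is either broken or missing. In Part~(1) your overall strategy (reduce to the normal inequality $x_0\cdot\vec{n}_{x_0}\ge r$ and extract it from the reflection property) is the right one, but the step you propose does not deliver it. Reflecting a single auxiliary minimum-radius point $p$ across $\Pi_{\vec{n}}(\rho)$ only tells you that $\Psi_{\Pi_{\vec{n}}(\rho)}(p)$ lies in $\overline{\Omega}$, and points of $\overline{\Omega}$ admit no lower bound on their norm (the origin lies in $\Omega$), so the formula $|p'|^2=m^2+4\rho(\rho-p\cdot\vec{n})$ yields no inequality on $a=x_0\cdot\vec{n}$; moreover $p$ need not lie in $\Pi^+_{\vec{n}}(\rho)$ at all, and the image of a point far from $x_0$ carries no information about the normal at $x_0$. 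What actually works is the full family of reflections at $x_0$ itself: for every unit vector $\nu$ with $x_0\cdot\nu>\rho$ and every $s\in(\rho,\,x_0\cdot\nu)$, the reflection property places $x_0-2(x_0\cdot\nu-s)\nu$ in $\overline{\Omega}$, which forces $\nu\cdot\vec{n}_{x_0}\ge 0$ on the entire spherical cap $\{\nu:\ x_0\cdot\nu>\rho\}$; containment of that cap in a half-space of directions is precisely $x_0\cdot\vec{n}_{x_0}\ge\sqrt{|x_0|^2-\rho^2}\ \ge\ \sqrt{m^2-\rho^2}=r$ with $m:=\inf_{x\in\partial\Omega}|x|$. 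You should also address regularity: Lemma~\ref{lem:starloc} presupposes a global $C^1$ level-set representation with nonvanishing gradient, which $\rho$-reflection does not provide, so for a general bounded open set the argument has to run through the cone characterization of Lemma~\ref{lem:star} (or through segments $[y,x_0]$, $y\in B_r(0)$) rather than through normals at ``smooth points.''

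Part~(2) as written is a plan rather than a proof, and its one concrete geometric choice is circular. If $\tilde{x}$ is the first exit point of $\overline{\Omega}$ along the ray from $0$ through $x'$, then $x'$ and $\tilde{x}$ are collinear with the origin, so the ``direction from $\tilde{x}$ to $x'$'' is the axis of the interior cone whenever $|x'|<|\tilde{x}|$ and points outward otherwise: your ``angular inequality'' degenerates into the very statement $x'\in\Omega$ that you are trying to prove, and no angle---hence no constant---ever enters. A workable argument must compare $x'$ with a boundary point off its own ray (for instance the exit point of the ray through the original $x$, or equivalently the Lipschitz-graph-over-the-sphere description of sets in $S_{r,R}$), and it is exactly in that off-axis comparison that the quantity $\sqrt{R^2-r^2}$ is weighed against $\rho$ and the factor $5$ is produced. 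Since the place where $\rho^2\ge 5(R^2-r^2)$ is used is merely asserted (``the hypothesis is precisely what makes the resulting angular inequality hold''), the entire quantitative content of the statement remains unverified.
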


For $E, F \subset \R^n$, define the Hausdorff distance by
\begin{align}
\label{eqn:dh}
d_H(E,F) := \max \left\{ \sup_{x \in E} d(x,F), \sup_{x \in F} d(x,E) \right\}
\end{align}

\begin{lem}
\label{lem-cpt}
\cite[Lemma 23]{FelKim14}
	Consider sets $\Omega_1$  and $\Omega_2$ in $S_{r,R}$ for $R>r>0$. Then the following holds:
$$ d_H(\partial \Omega_1, \partial \Omega_2) \lesssim_{r,R} d_H(\Omega_1, \Omega_2), \quad |\Omega_1 \Delta \Omega_2| \lesssim_{r,R} d_H(\Omega_1, \Omega_2),$$
	\end{lem}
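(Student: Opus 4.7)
The plan is to represent both sets via their radial functions and reduce both inequalities to a uniform pointwise bound on their difference. Since $\Omega_i \in S_{r,R}$, we may write
\begin{equation*}
\Omega_i = \{ s\omega : \omega \in S^{n-1},\, 0 \leq s < \rho_i(\omega) \}
\end{equation*}
with $\rho_i : S^{n-1} \to [r, R]$. The key estimate to establish is $\|\rho_1 - \rho_2\|_{L^\infty(S^{n-1})} \leq C(r,R)\, d_H(\Omega_1, \Omega_2)$, from which both claims follow easily.

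The preparatory step is to show that each $\rho_i$ is Lipschitz on $S^{n-1}$ with constant $L = L(r,R)$. Approximating $\partial\Omega_i$ by smooth radial graphs if needed, Lemma~\ref{lem:starloc} gives $\vec{n}_x \cdot (x/|x|) \geq r/|x|$ at each boundary point. Writing the outward normal to the radial graph as $\vec{n} \propto \omega - \nabla_\omega \rho_i/\rho_i$, where $\nabla_\omega$ denotes the spherical gradient, substitution yields $|\nabla_\omega \rho_i|^2 \leq \rho_i^2(\rho_i^2 - r^2)/r^2 \leq L^2$ with $L := R\sqrt{R^2 - r^2}/r$, and this bound persists in the limit.

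For the key estimate, fix $\omega_0 \in S^{n-1}$ and assume without loss of generality $s := \rho_1(\omega_0) - \rho_2(\omega_0) > 0$. Set $x_0 := \rho_1(\omega_0)\omega_0 \in \partial\Omega_1$; by definition of Hausdorff distance there is $y \in \overline{\Omega_2}$ with $|x_0 - y| \leq d := d_H(\Omega_1, \Omega_2)$. If $d \geq r/2$ then $s \leq R \leq (2R/r)\,d$ is trivial, so assume $d < r/2$; then $|y| \geq |x_0| - d \geq r/2$ and $\omega_y := y/|y|$ is well defined. The polar identity
\begin{equation*}
|x_0 - y|^2 = (\rho_1(\omega_0) - |y|)^2 + \rho_1(\omega_0)\,|y|\,|\omega_0 - \omega_y|^2
\end{equation*}
yields simultaneously $|\rho_1(\omega_0) - |y|| \leq d$ and $|\omega_0 - \omega_y| \leq \sqrt{2}\,d/r$. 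Using $|y| \leq \rho_2(\omega_y)$ and the Lipschitz bound on $\rho_2$,
\begin{equation*}
\rho_2(\omega_0) \geq \rho_2(\omega_y) - L\,|\omega_0 - \omega_y| \geq \rho_1(\omega_0) - \left(1 + \sqrt{2}\,L/r\right) d,
\end{equation*}
so $s \leq C_0\, d$ with $C_0 := 1 + \sqrt{2}\,L/r$. This is the main technical step.

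The two inequalities then follow directly. For the first, any $\rho_1(\omega)\omega \in \partial\Omega_1$ has $\rho_2(\omega)\omega \in \partial\Omega_2$ at Euclidean distance $|\rho_1(\omega) - \rho_2(\omega)| \leq C d$, and symmetrically. For the second, integrating in spherical coordinates gives
\begin{equation*}
|\Omega_1 \Delta \Omega_2| = \int_{S^{n-1}} \left|\int_{\rho_2(\omega)}^{\rho_1(\omega)} s^{n-1}\, ds\right| d\sigma(\omega) \leq R^{n-1}\, |S^{n-1}|\, \|\rho_1 - \rho_2\|_{L^\infty} \leq C'(r,R)\, d.
\end{equation*}
The main obstacle is the key estimate: translating a purely Euclidean bound $|x_0 - y| \leq d$ into a radial comparison requires exploiting $B_r$-star-shapedness to control the angular displacement $|\omega_0 - \omega_y|$ and then invoking the Lipschitz regularity of $\rho_2$. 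Without this structural constraint, neither inequality holds in general.
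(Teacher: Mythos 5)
There is no in-paper argument to compare against: the paper takes this lemma verbatim from \cite[Lemma 23]{FelKim14} and never proves it, so your write-up is a standalone proof, and it is essentially correct. The radial parametrization $\rho_i:S^{n-1}\to[r,R]$, the polar identity splitting $|x_0-y|$ into a radial error $\le d$ and an angular error $\le \sqrt{2}\,d/r$, and the transfer through the Lipschitz bound on $\rho_2$ together give exactly the quantitative comparison $\|\rho_1-\rho_2\|_{L^\infty}\lesssim_{r,R} d_H(\Omega_1,\Omega_2)$, from which both displayed inequalities follow as you state (the chordal-versus-geodesic distance on $S^{n-1}$ only changes the constant). The one step I would tighten is the preparatory Lipschitz estimate: invoking Lemma~\ref{lem:starloc} after ``approximating $\partial\Omega_i$ by smooth radial graphs if needed'' is not really justified, since it is not clear how to approximate a general element of $S_{r,R}$ by smooth sets that remain in $S_{r',R'}$ with $r'\to r$ (mollifying $\rho_i$ does not obviously preserve the constraint, and the computation you run only applies to sets already known to satisfy it). The robust route, which requires no smoothness at all, is the interior/exterior cone characterization of $S_r$ in Lemma~\ref{lem:star}: the cones $IC(x,r)$ and $EC(x,r)$ have aperture $\theta_x=\arcsin(r/|x|)\ge\arcsin(r/R)$ at every boundary point, which directly forces the radial graph to be Lipschitz with a constant $L=L(r,R)$ comparable to your $R\sqrt{R^2-r^2}/r$; this is precisely how the paper's proof of Lemma~\ref{lem:den} obtains its uniform Lipschitz constant. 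With that substitution your argument is complete, and it has the additional merit over the bare citation of producing explicit constants, e.g.\ $C_0=\max\{2R/r,\,1+\sqrt{2}L/r\}$ for the radial bound.
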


Lastly, let us show the following property of characteristic functions.

\begin{lem}
\label{lem:sts}
Let $\{ \oktt \}_{k \in \natural}$ be a sequence of sets in $S_{r,R}$ for $0<r<R$. Suppose that $\okt$ converges locally uniformly to $\oit$. For a sequence of functions $\{ u_k \}_{k \in \natural \cup \{ + \infty \} }$ defined by
\begin{align}
u_k := \chi_{\okt} - \chi_{(\okt)^C} \hbox{ for } k \in \natural \cup \{ + \infty \}
\end{align}
it holds that
\begin{align}
\label{eqn:1sts}
u_\infty^* = \limsups_{k \rightarrow \infty}u_k \hbox{ and } (u_\infty)_* = \liminfs_{k \rightarrow \infty}u_k 
\end{align}
Here, $\limsups$ and $\liminfs$ are given in \eqref{eqn:sups}.
\end{lem}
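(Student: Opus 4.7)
The plan is to translate both identities in \eqref{eqn:1sts} into equivalent set-theoretic statements and then resolve them using the common star-shaped structure. Since each $\Omega^k_s$ is open, $u_k=1$ on $\Omega^k_s$ and $u_k=-1$ on $(\Omega^k_s)^c$, so unpacking the definition of semicontinuous envelopes and of \eqref{eqn:sups} yields
\begin{align*}
u_\infty^*(x,t)=1 \quad &\Longleftrightarrow\quad \exists\,(y_j,s_j)\to(x,t),\ y_j\in\Omega^\infty_{s_j}, \\
\limsups_{k\to\infty} u_k(x,t)=1 \quad&\Longleftrightarrow\quad \exists\,k_j\to\infty,\ (y_j,s_j)\to(x,t),\ y_j\in\Omega^{k_j}_{s_j},
\end{align*}
and symmetric descriptions of $(u_\infty)_*=-1$ and $\liminfs u_k=-1$ with membership flipped. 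Because every $\Omega^k_s\in S_{r,R}$, it is parametrized by a Lipschitz radial function $\rho_k(\cdot,s)\colon S^{n-1}\to[r,R]$, and the local uniform Hausdorff convergence $\Omega^k_s\to\Omega^\infty_s$ translates (cf.~Lemma~\ref{lem-cpt}) into uniform convergence $\rho_{k_j}\to\rho_\infty$ on compact time intervals. I will only spell out $u_\infty^*=\limsups u_k$ in detail; the $\liminfs$ identity follows from the symmetric argument with outward radial perturbation replacing inward.

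The inclusion $u_\infty^*\le\limsups u_k$ is direct. Given $(y_j,s_j)\to(x,t)$ with $y_j\in\Omega^\infty_{s_j}$, pick $k_j\to\infty$ so that $\|\rho_{k_j}(\cdot,s)-\rho_\infty(\cdot,s)\|_\infty<1/j$ uniformly for $s$ in a fixed compact neighborhood of $t$. Writing $y_j=|y_j|\theta_j$ with $\theta_j\in S^{n-1}$, either $|y_j|<\rho_{k_j}(\theta_j,s_j)$ and I set $z_j=y_j$, or $|y_j|\ge \rho_{k_j}(\theta_j,s_j)$, in which case $|y_j|\le \rho_\infty(\theta_j,s_j)$ and I set $z_j=(\rho_{k_j}(\theta_j,s_j)-1/j^2)\theta_j$. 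In either case $z_j\in\Omega^{k_j}_{s_j}$ and $|z_j-y_j|=O(1/j)$, so $(z_j,s_j)\to(x,t)$ with $z_j\in\Omega^{k_j}_{s_j}$.

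The reverse inclusion $\limsups u_k\le u_\infty^*$ is the main obstacle. Given $k_j\to\infty$ and $(y_j,s_j)\to(x,t)$ with $y_j\in\Omega^{k_j}_{s_j}$, the naive Hausdorff argument produces a nearby point only in $\overline{\Omega^\infty_{s_j}}$, which may fall on $\partial\Omega^\infty_{s_j}$ and contribute $u_\infty=-1$ instead of $+1$; the star-shape hypothesis is precisely what lets me push such a point strictly into the interior. Assuming $x\ne 0$ (the case $x\in \overline{B}_r$ is immediate since $\overline{B}_r\subset\Omega^k_t$ for every $k$ by definition of $S_{r,R}$), set $\theta_j=y_j/|y_j|$. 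If $|y_j|<\rho_\infty(\theta_j,s_j)$ I take $z_j=y_j\in\Omega^\infty_{s_j}$; otherwise I set $z_j=(\rho_\infty(\theta_j,s_j)-1/j)\theta_j\in\Omega^\infty_{s_j}$, and the uniform radial estimate $|y_j|\le\rho_{k_j}(\theta_j,s_j)\le\rho_\infty(\theta_j,s_j)+o(1)$ gives $|z_j-y_j|=o(1)$. Hence $(z_j,s_j)\to(x,t)$ with $z_j\in\Omega^\infty_{s_j}$, so $u_\infty^*(x,t)=1$, completing the proof.
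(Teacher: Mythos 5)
Your proof is correct, but it runs on a different engine than the paper's. The paper argues directly with Hausdorff distances: on $\oit$ the set convergence immediately gives $\limsups_{k\to\infty} u_k=1=u_\infty^*$, and for the complementary region it invokes Lemma~\ref{lem-cpt} together with $d_H(\Omega_1^C,\Omega_2^C)\le d_H(\partial\Omega_1,\partial\Omega_2)$ to get locally uniform convergence of the complements, patching up boundary points by semicontinuity; you instead encode the same geometric information in a radial parametrization $\rho_k(\cdot,s)$ of sets in $S_{r,R}$ and produce the approximating points by explicit inward/outward radial perturbations. Both routes work, and yours has the merit of making the two identities in \eqref{eqn:1sts} genuinely symmetric, but two points deserve attention. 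First, the step ``locally uniform Hausdorff convergence implies uniform convergence of the radial functions'' is precisely where the $S_{r,R}$ geometry enters and is not literally the statement of Lemma~\ref{lem-cpt}; it follows from the uniform Lipschitz bound on the radial graphs (equivalently the exterior cone condition in Lemma~\ref{lem:star}), possibly combined with Lemma~\ref{lem-cpt}, and should be stated as such. Second, your assessment of where the difficulty lies is inverted: the inclusion $\limsups_{k\to\infty} u_k\le u_\infty^*$ needs no star-shapedness at all, since $d_H$ in \eqref{eqn:dh} is computed between the open sets themselves, so any $y_j\in\Omega^{k_j}_{s_j}$ lies within $2d_H(\Omega^{k_j}_{s_j},\Omega^\infty_{s_j})$ of a point of $\Omega^\infty_{s_j}$ (not merely of its closure). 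The direction that genuinely requires the geometry is the $\liminfs$ identity, equivalently the convergence of complements (take $\Omega^k_t=B_1$ minus a fixed slit and $\Omega^\infty_t=B_1$: the sets coincide in Hausdorff distance yet $(u_\infty)_*\ne\liminfs_{k\to\infty}u_k$ on the slit); you dispatch this as ``the symmetric argument,'' which is legitimate only because your radial framework carries exactly that complement information, and it would be worth saying so. Finally, $\overline{B}_r\subset\okt$ is not literally part of the definition of $S_{r,R}$ (only $B_r(0)\subset\okt$ is), though your special case is harmless since $B_r\subset\Omega^\infty_s$ already forces $u_\infty^*(x,t)=1$ there.
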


\begin{proof} 
Let us show the first equation in \eqref{eqn:1sts} only. The second one can be shown by the parallel arguments.

\medskip

By uniform convergence in finite interval, for any $j \in \natural$, there exists $k_1>0$ such that for all $k>k_1$
\begin{align}
\label{eqn:sts11}
d_H(\okt, \oit) < \frac{1}{j}.
\end{align}
Thus, for any $x \in \oit$ and $k>k_1$, there exists $y \in \okt$ such that $|x-y| < \frac{1}{j}$. Thus, we conclude that 
\begin{align}
\limsups_{k \rightarrow \infty}u_k(x,t) = \lim\limits_{j \to \infty} \sup \left\{ u_k(y,s) : k \geq j,\quad |y-x| \leq \frac{1}{j}, \quad |s-t| \leq \frac{1}{j} \right\} = 1 
\end{align}
and $u_\infty^*(x) = \limsups_{k \rightarrow \infty}u_k(x)$ for $x \in \oit$.

\medskip

Note that we have for any sets $\Omega_1, \Omega_2 \in S_{r,R}$
\begin{align}
d_H(\Omega_1^C, \Omega_2^C) \leq d_H(\partial \Omega_1, \partial \Omega_2) 
\end{align}
Combining this with Lemma~\ref{lem-cpt}, we conclude that $(\okt)^C$ converges locally uniformly to $(\oit)^C$. By parallel arguments, for any $x \in (\oit)^C$, we conclude that $\limsups_{k \rightarrow \infty}u_k(x,t) = -1$. As $\limsups_{k \rightarrow \infty}u_k$ is upper semicontinuous, we conclude \eqref{eqn:1sts}.

\end{proof}

\bibliographystyle{alpha}
\bibliography{Paper_VPMCF}

\end{document}